\newcommand{\ex}{\mathbb{E}}
\newcommand{\pr}{\mathbb{P}}
\newcommand{\R}{\mathbb{R}}
\newcommand{\bigo}{\mathcal{O}}
\newcommand{\N}{\mathcal{N}}
\newcommand{\cals}{\mathcal{S}}
\DeclarePairedDelimiterX{\norm}[1]{\lVert}{\rVert}{#1}
\DeclarePairedDelimiterX{\abs}[1]{\lvert}{\rvert}{#1}
\DeclarePairedDelimiterX{\0norm}[1]{\lVert}{\rVert_{0}}{#1}
\DeclarePairedDelimiterX{\1norm}[1]{\lVert}{\rVert_{1}}{#1}
\DeclarePairedDelimiterX{\2norm}[1]{\lVert}{\rVert_{2}}{#1}
\DeclarePairedDelimiterX{\nnorm}[1]{\lVert}{\rVert_{n}}{#1}
\DeclarePairedDelimiterX{\2nnorm}[1]{\lVert}{\rVert_{n}^2}{#1}
\newcommand{\he}{\hat{\epsilon}}
\newcommand{\hf}{\hat{f}}
\newcommand{\hft}{\hat{f}_t}
\newcommand{\e}{\epsilon}
\newcommand{\s}{\sigma}
\newcommand{\lam}{\lambda}
\newcommand{\hs}{\hat{\s}}
\newcommand{\hb}{\hat{\beta}}
\newcommand{\het}{\hat{\epsilon}_t}
\newcommand{\nee}{\norm{\e}}
\newcommand{\neh}{\norm{\he}}
\newcommand{\net}{\norm{\het}}
\newcommand{\RR}{\mathcal{R}}
\newcommand{\A}{\mathcal{A}}
\let\oldsqrt\sqrt
\def\sqrt{\mathpalette\DHLhksqrt}
\def\DHLhksqrt#1#2{%
\setbox0=\hbox{$#1\oldsqrt{#2\,}$}\dimen0=\ht0
\advance\dimen0-0.2\ht0
\setbox2=\hbox{\vrule height\ht0 depth -\dimen0}%
{\box0\lower0.4pt\box2}}
\newtheoremstyle{break}
  {\topsep}{\topsep}%
  {\itshape}{}%
  {\bfseries}{}%
  {\newline}{}%
\theoremstyle{break}
\newtheorem{definition}{Definition}[section]
\newtheorem{lemma}{Lemma}[section]
\newtheorem{theorem}{Theorem}[section]
\newtheorem{proposition}{Propostion}[section]
\newtheorem{Coro}{Corollary}[section]
\newtheorem{assumption}{Assumption}[section]
\theoremstyle{definition}
\newtheorem*{note}{Note}
\newtheorem*{remark}{Remark}
\begin{document}
\Sconcordance{concordance:analysis-pp.tex:analysis-pp.Rnw:%
1 116 1 1 0 48 1}
\Sconcordance{concordance:analysis-pp.tex:./analysis-Section1.Rnw:ofs 166:%
1 230 1}
\Sconcordance{concordance:analysis-pp.tex:analysis-pp.Rnw:ofs 397:%
167}
\Sconcordance{concordance:analysis-pp.tex:./analysis-Section3.Rnw:ofs 398:%
1 64 1}
\Sconcordance{concordance:analysis-pp.tex:./analysis-Section4.Rnw:ofs 463:%
1 123 1}
\Sconcordance{concordance:analysis-pp.tex:./analysis-Section5.Rnw:ofs 587:%
1 479 1}
\Sconcordance{concordance:analysis-pp.tex:./analysis-Section6.Rnw:ofs 1067:%
1 18 1}
\Sconcordance{concordance:analysis-pp.tex:analysis-pp.Rnw:ofs 1086:%
172 17 1}
\Sconcordance{concordance:analysis-pp.tex:./analysis-AppendixA.Rnw:ofs 1104:%
1 38 1}
\Sconcordance{concordance:analysis-pp.tex:analysis-pp.Rnw:ofs 1143:%
191}
\Sconcordance{concordance:analysis-pp.tex:./analysis-AppendixC.Rnw:ofs 1144:%
1 226 1}
\Sconcordance{concordance:analysis-pp.tex:./analysis-AppendixD.Rnw:ofs 1371:%
1 255 1}
\Sconcordance{concordance:analysis-pp.tex:./analysis-AppendixE.Rnw:ofs 1627:%
1 61 1}
\Sconcordance{concordance:analysis-pp.tex:analysis-pp.Rnw:ofs 1689:%
195 12 1}

\begin{frontmatter}
\title{Oracle inequalities for square root analysis estimators with application to total variation penalties}
\runtitle{Oracle inequalities for square root  analysis estimators}


\author{\fnms{Francesco} \snm{Ortelli}\ead[label=e1]{francesco.ortelli@stat.math.ethz.ch}} \and  \author{\fnms{Sara} \snm{van de Geer}\corref{}\ead[label=e2]{geer@stat.math.ethz.ch}}
\address{R\"{a}mistrasse 101\\ 8092 Z\"{u}rich\\ \printead{e1}; \printead{e2}}

\affiliation{Seminar for Statistics, ETH Z\"{u}rich}

\runauthor{Ortelli, van de Geer}

\begin{abstract}
Through the direct study of the analysis estimator we derive oracle inequalities with fast and slow rates by adapting the arguments involving projections by \cite{dala17}. We then extend the theory to the square root analysis estimator. Finally, we focus on (square root) total variation regularized estimators on graphs and obtain constant-friendly rates, which, up to log-terms, match  previous results obtained by entropy calculations. We  also obtain an oracle inequality for the (square root) total variation regularized estimator over the cycle graph.
\end{abstract}


\begin{keyword}
\kwd{Analysis}
\kwd{Total variation regularization}
\kwd{Lasso}
\kwd{Edge Lasso}
\kwd{Cycle graph}
\kwd{Sparsity}
\kwd{Trend filtering}
\kwd{Oracle inequality}
\kwd{Nullspace}
\kwd{Square root Lasso}
\end{keyword}

\end{frontmatter}
\tableofcontents

\section{Introduction}

\subsection{Review of the literature}

\subsubsection{Synthesis and analysis}
In the literature we find two approaches to regularized empirical risk minimization: the synthesis and the analysis approach, see \cite{elad07}. Given a dictionary $X \in \R^{n \times p}$, the synthesis approach to the estimation of $f^0\in \R^n$ is expressed by the {synthesis estimator}
$$ \hat{f}= X \hat{\beta}, \ \hat{\beta}= \arg \min_{\beta \in \R^p} \left\{\norm{Y-X \beta}^2_n + 2 \lambda \norm{\beta}_1 \right\},\ \lambda>0,$$
where $Y= f^0 + \epsilon,\ \epsilon \sim \N_n(0, \sigma^2 \text{I}_n),\ \sigma\in (0, \infty)$, and for a vector $f \in \R^n$ we write $\norm{f}^2_n= \sum_{i=1}^n f_i^2/n$. An  instance of synthesis estimator is the classical lasso (\cite{tibs96}, see \cite{buhl11} and \cite{vand16} for a thorough exposition of the theory about the lasso).

On the other side, for an analysis operator $D\in \R^{m \times n}$, the {analysis estimator} is given by
$$ \hat{f}= \arg \min_{f \in \R^n} \left\{ \norm{Y-f}^2_n + 2 \lambda \norm{Df}_1 \right\},\ \lambda>0.$$
The analysis approach to the estimation of $f^0$ has previously been studied in e.g. \cite{vait13} and \cite{nam13}.
Instances of analysis estimators are total variation regularized estimators over graphs, in particular the fused lasso (\cite{tibs05}), which corresponds to the case of the path graph. For such estimators, $D$ is taken to be the incidence matrix of some directed graph $\vec{G}=(V,E)$.

Algorithms to solve both the analysis and the synthesis problem are exposed in \cite{tibs11}.

\subsubsection{Total variation regularized estimators}\label{analysis-sss112}

Let $\vec{G}=(V,E)$ be a general directed graph, where the set $V=[n]$ is the set of vertices and the set $E=\{e_1, \ldots, e_m\}$ is the set of edges. Every edge $e_i=(e_i^-, e_i^+)$ is directed from a vertex $e_i^-\in V$ to a vertex $e_i^+\in V$, $e_i^-\not= e_i^+$.

We define $D_{\vec{G}} \in \{-1,0,1\}^{m \times n}$, the the incidence matrix of $\vec{G}$, as
$$ (d_i')_j= \begin{cases} -1, &j=e_i^-,\\
1, & j=e_i^+,\\
0, & \text{else}, \end{cases}$$

where $d_i', i \in [m]$ denote the $i^{\text{th}}$ row of $D_{\vec{G}}$.
Total variation regularized estimators are analysis estimators, where the anaylsis operator $D$ is taken to be $D=D_{\vec{G}}$ for some graph $\vec{G}$. Thus, the differences of the candidate estimator $f$ across the edges of the graph $\vec{G}$ are penalized.

Some previous studies of total variation regularized estimators  (\cite{dala17,orte18}) used a {step through a synthesis formulation} (cf. \cite{orte19-1}) to prove oracle inequalities. However, these studies were confined to restrictive graph structures: the path in \cite{dala17} and a class of tree graphs in \cite{orte18}. Other studies focusing on the fused lasso and not directly involving its synthesis form also implicitly relied on some kind of dictionary to handle the error term by projections onto some columns of this dictionary, see for instance the lower interpolant by \cite{lin17b}.

The approach by \cite{hutt16}, in spite of handling directly the analysis estimator, is not able to  guarantee the convergence of the mean squared error for the fused lasso.

For $C>0$, define
$$ \mathcal{G}(C):= \left\{f\in \text{rowspan}(D): \norm{D_{\vec{G}}f}_1\le C  \right\}.$$

The minimax rate of estimation over the path graph for functions $f \in \mathcal{G}(C),\ C \asymp 1$ is $n^{-2/3}$ (\cite{dono98}).
Moreover the fused lasso tuned with $\lambda \asymp n^{-2/3} C^{-1/3}$  has $ \norm{\hat{f}-f^0}^2_n= \bigo_{\pr}(n^{-2/3}C^{2/3})$ if $f^0\in \mathcal{G}(C)$ and thus achieves the minimax rate (\cite{mamm97-2}).
This result is based on {entropy bounds} (see \cite{babe79, birm67}) on the class $\mathcal{G}(C)$, which are not constant-friendly.
On the opposite side, \cite{sadh16} showed that estimators given by linear transformations of the observations are suboptimal on $\mathcal{G}(C)$.

In a recent paper, \cite{padi17} prove that when $\vec{G}$ is a tree graph with bounded maximal degree and $f^0 \in \mathcal{G}(C)$, then the minimax rate is $n^{-2/3} C^{2/3}$.

Moreover,  \cite{padi17} prove that the total variation regularized estimator over any connected graph has a mean squared error of order at most $n^{-2/3}C^{2/3}$ if $f^0 \in \mathcal{G}(C)$. Thus the total variation regularized estimator over tree graphs of bounded maximal degree is proved to be minimax-optimal. This result is based on entropy bounds by \cite{wang16} and is not constant-friendly.

In \cite{sadh16}, the authors prove that, for $\vec{G}$ being the two dimensional grid graph, the minimax rate of estimation for  $f^0 \in \mathcal{G}(C)$ with the canonical scaling $C \asymp n^{1/2}$ is $\sqrt{{\log n}/{n}}$.  The paper by \cite{hutt16} shows that this rate is retrieved by the total variation regularized estimator up to log terms.

In a recent work, \cite{chat19} obtain convergence rates for the total variation regularized estimator over the two dimensional grid by proof techniques involving bounds on the Gaussian width of tangent cones.

These previous results will serve as a {benchmark} for the evaluation of the rates of the  oracle inequalities presented in this paper.

\subsubsection{Square root regularization}
The square root lasso estimator, defined as
$$\hat{\beta}:= \arg \min_{\beta\in \R^n} \left\{ \norm{Y-X\beta}_n + \lambda_0 \norm{\beta}_1 \right\},\ \lambda_0>0,$$
was first introduced by \cite{bell11} and  allows to simulataneously estimate the regression coefficients and the noise level. Thus,  when tuning the estimator to obtain oracle properties, one can {choose $\lambda_0$ not depending on the unknown noise level $\sigma$}. The square root lasso estimator is studied in \cite{bell11}, \cite{sun12} (where it is called scaled lasso), \cite{vand16} and \cite{stuc17}, among the others.

One can rewrite the minimization problem in the following form
$$ (\hat{\beta}, \hat{\sigma}):= \arg \min_{\beta \in \R^p, \sigma>0} \left\{ \frac{\norm{Y-X\beta}^2_n}{\sigma} + \s + 2 \lam_0\norm{\beta}_1 \right\},\ \lambda_0 >0.$$
The objective function of this second expression of the estimator is not differentiable at $\s=0$ and thus {if $\hs=0$ the KKT conditions do not hold}.
By differentiating the penalized loss and assuming that $\hs\not=0$ we get the KKT conditions
$$ \hat{\s}^2= \norm{Y-X \hat{\beta}}^2_n \text{ and } \frac{X'(Y-X\hb)}{n}= \lam_0 \hs \partial \norm{\hb}_1,$$
where $\partial \norm{\hb}_1$ is the subdifferential of $\norm{{\beta}}_1$ at $\beta=\hb$.

The papers \cite{bell11,sun12} propose algorithms to compute the square root lasso estimator, which are extended by \cite{bune14} and \cite{deru18} to the cases of the group square root lasso and of the square root slope respectively.

In this paper  we focus on analysis estimators. Our interest is motivated by the possibility to apply the results to the case of total variation regularization. As it will turn out in Theorems \ref{analysis-t01s03} and \ref{analysis-t02s03}, the choice of the tuning parameter $\lambda$ needed to ensure oracle properties for plain analysis estimators depends on the noise variance $\sigma^2$, which might be unknown. Therefore, we are interested in the square root version of the analysis estimator: the square root analysis estimator
$$ \hat{f}_{\sqrt{}}:= \arg \min_{f \in \R^n} \left\{\norm{Y-f}_n + \lam_0 \norm{Df}_1 \right\},\ \lam_0>0.$$
Indeed, square root estimators are known to be able to estimate the signal and the noise variance simultaneously and therefore allow for a choice of the tuning parameter $\lambda_0$ that does not depend on $\sigma$ to guarantee oracle inequalities. This will turn our to be the case in Theorem \ref{analysis-t01s04} and \ref{analysis-t02s04}.
Square root analysis estimators could be computed either by transforming them into square root synthesis estimators by using the insights provided by \cite{orte19-1} (which are largely based on \cite{elad07}) or by adapting to the square root case the algorithm provided by \cite{tibs11} to solve plain analysis problems.

We want to combine the arguments exposed by \cite{vand16} and \cite{dala17} and extend them to the square root analysis estimator.

\subsection{Contributions}
The main points profiling our results are:
\begin{itemize}
\item we study directly the analysis estimator without passing through its synthesis formulation;
\item we apply the projection arguments by \cite{dala17} to the case of square root regularization;
\item to do so we use  projection theory for analysis operators.
\end{itemize}
We make the following contributions:
\begin{enumerate}
\item We present a framework for proving oracle inequalities with fast and slow rates for a general analysis estimator without transforming the analysis estimation problem into a synthesis estimation problem. This constitutes an analysis counterpart of the results obtained by \cite{dala17} for the synthesis estimator.
\item We  introduce, inspired by some remarks by \cite{padi17}, $r_{S_0}:= \text{dim}(\N(D_{-S_0}))$ as measure for the sparsity of the signal (see Subsection \ref{analysis-s01ss03} for the notation). In \cite{hutt16}, the sparsity of the true signal was measured as $\norm{Df^0}_0$, while we argue that $r_{S_0}$ is  more appropriate.
\item For the total variation regularized estimator on the path graph, we show that an analogue of the bound on the increments of the empirical process by projections exposed by \cite{dala17}  is only off  by  log-terms from the one which can be obtained by entropy calculations, if we allow the tuning parameter $\lambda$ to depend on some aspects of $f^0\in \mathcal{G}(C)$. We thus match, up to log-terms, the result obtained   by means of entropy calculations by \cite{padi17} for general graphs  and by \cite{mamm97-2} for the path graph.
Note that entropy calculations are not constant-friendly, while the bounds we expose are and might be advantageous for a small enough value of $n$.
\item For the total variation regularized estimator over the cycle graph, we prove an oracle inequality with fast rates, which to our knowledge is a new contribution.
\item We adapt a lemma by \cite{vand16}, showing that the square root lasso does not overfit, to the case where the increments of the empirical process for the square root analysis estimator are bounded by means of the projection arguments by \cite{dala17}. This is a starting point for the development of {oracle inequalities for the square root analysis estimator}, which produce results analogous to the ones obtained for the plain analysis estimator (which match the ones found in \cite{dala17}). We then narrow down these results to square root total variation regularized estimators on graphs.
\end{enumerate}

\subsection{Notation}\label{analysis-s01ss03}
\textbf{Analysis operator $D$}. Let $D \in \R^{m \times n}$ be a given matrix. Let $\{ d'_i\}_{i \in [m]}$ denote the row vectors of $D$. By $\N(D)$, we denote the nullspace of $D$, i.e. $ \N(D):= \left\{ x \in \R^n: Dx=0 \right\}$.
Let $ \N^\perp (D):= \left\{ x \in \R^n: x'z=0, \forall z \in \N(D) \right\}$ denote the orthogonal complement of $\N(D)$. Note that $\N^\perp (D)= \text{rowspan}(D)$. By penalizing $\norm{Df}_1$, we favor an estimator lying almost in $\N(D)$, while we penalize estimators having high correlation with the rows of $D$.

\textbf{Active set $S \subset [m]$}. Let $S \subseteq [m]$ denote a subset of the row indices of $D$.
We denote the cardinality of the set $S$ by $s:= \abs{S}$. We write $-S:= [m] \setminus S$.
Moreover, we write $D_S= \{d_i'\}_{i \in S}\in \R^{s \times n}$ and $D_{-S}= \{d_i'\}_{i \in -S}\in \R^{(m-s) \times n}$. For instance, let us suppose that, for $S_0\subseteq[m]$, the true signal is s.t. $D_{S_0}f^0\not= 0$ and $D_{-S_0}f^0=0$. Then $S_0$ is the true active set for $Df^0$, i.e. the set of indices of rows of $D$, to which the true signal is not orthogonal. 

\textbf{Set of admissible active sets $\cals$}. Define $ S(f):=\text{support}(Df)= \{j\in [m]: d_j'f\not= 0\}$.
and 
$$ \cals=\cals(D):= \{S \subseteq [m] | \exists f \in \R^n: S=S(f)\}\subseteq \mathcal{P}([m]),$$
where $\mathcal{P}([m])$ denotes the power set of $[m]$. If $D$ is not of full row rank, then there might be some subsets of $[m]$, that can not be the active sets of $Df$ for any $f \in \R^n$. Thus, from now on, we restrict our attention to active sets $S \in \cals(D)$. More on this in Remark \ref{analysis-r01s05} in Section \ref{analysis-s05}.

\textbf{The nullspace $\N(D_{-S})$}.
 Note that, since $D_{-S(f)}f=0$, $f \in \N(D_{-S(f)})$. Thus $\N(D_{-S})$ encompasses all the signals $f$, s.t. $S\supseteq S(f)$. In a vector $f\in \R^n$ we have $n$ ``pieces'' of information. Note that $\N(D)$ can be nonempty and thus the part of $f$ lying in $\N(D)$ will always be  active, because it is not penalized. Moreover, since we can have $m>n$ and $\norm{Df}_0>n$, we see that $\norm{Df}_0=\abs{S(f)}$ is not a good measure for the sparsity of the signal. We thus use as a measure of sparsity $r_S=\text{dim}(\N(D_{-S}))  \le n$ to denote the pieces of information that the estimator effectively had to estimate if the active set were $S$.
 
We use the shorthand notations $\N_S:= \N(D_S)$ and $\N_{-S}:= \N(D_{-S})$. Similarly, we write $\N_S^\perp:= \N^\perp(D_S)$ and $\N_{-S}^\perp:= \N^\perp(D_{-S})$. Note that
$ \N(D)= \N(D_S) \cap \N(D_{-S}).$
Moreover, if $S,S'\subseteq [m]$ are s.t. $S \subset S'$, then we have that $ \N(D_{S})\supseteq \N(D_{S'})$.
In addition, if the rows of $D_{S'\setminus S}$ can be written as linear combinations of the rows of $D_S$, then $ \N(D_{S'})= \N(D_S)$.

\textbf{Diagonal matrices of weights}. Let $\tilde{w}\in \R^m$ be a vector, for instance a vector of weights. For the diagonal matrix $\tilde{W}=\text{diag}( \{\tilde{w}_i\}_{i \in [m]})\in \R^{m\times m}$ we write $\tilde{W}_S:=\text{diag}( \{\tilde{w}_i\}_{i \in S})\in \R^{s\times s}$ and $\tilde{W}_{-S}:=\text{diag}( \{\tilde{w}_i\}_{i \in -S})\in \R^{(m-s)\times (m-s)}$. We will need these notations for bounding the weighted weak compatibility constant, defined in Definition \ref{analysis-d04s01} below.

\textbf{Linear projections}. Let $\text{I}_n\in \R^{n \times n}$ denote the identity matrix and let $\mathbb{I}_n=\{1\}^{n \times n}$.

Let $V \subset \R^n$ be a linear space. By $\Pi_V \in \R^{n \times n}$ we denote the orthogonal projection matrix onto $V$ and by $A_V:= \text{I}_n- \Pi_V$ the orthogonal antiprojection matrix onto $V$.

Let $f \in \R^n$. We write $ f= (\Pi_{\N_{-S}}+ \Pi_{\N_{-S}^\perp})f=: f_{\N_{-S}}+ f_{\N_{-S}^\perp}$, i.e. for a set $S \in \cals(D)$ we decompose a signal $f$ into a low rank part (since usually $r_S$ will be small) orthogonal to $D_{-S}$ and a part collinear to $D_{-S}$.
We will use this decomposition when bounding the increments of the empirical processes in the proofs of the oracle inequalities.

Note that  $ \Pi_{\N_{-S}^\perp}= \text{I}_n - \Pi_{\N_{-S}}=: A_{\N_{-S}}$ and $ \Pi_{\N_{-S}}= \text{I}_n - \Pi_{\N_{-S}^\perp}=: A_{\N_{-S}^\perp}$.

\textbf{Computing $\Pi_{\N^{\perp}_{-S}}$}. 
 Let $S\in \cals$ be a set of row indices of $D$. We have that
$$ \Pi_{\N^\perp(D_{-S})}=\Pi_{\text{rowspan}(D_{-S})} = D_{-S}^+D_{-S},$$
where $D_{-S}^+ \in \R^{n \times (m-s)}$ denotes the Moore-Penrose pseudoinverse of $D_{-S}$.
If $D_{-S}\in \R^{(m-s)\times n}$ is of full row rank we have that $D_{-S}^+= D_{-S}'(D_{-S}D_{-S}')^{-1}$.

\subsection{Model assumptions and preliminary definitions}

\subsubsection{Model assumptions}
Throughout the paper we will use the following model, which assumes that we observe a signal contaminated with Gaussian noise. Let $f^0\in \R^n$ be a signal. We observe 
$$Y=f^0+ \epsilon,\ \epsilon \sim \N_n(0, \sigma^2 \text{I}_n),\ \sigma \in (0, \infty).$$ 

Moreover, for an analysis operator  $D \in \R^{m \times n}$ we will study the two following estimators.

\begin{itemize}
\item The \textbf{analysis estimator $\hat{f}$} of $f^0$, defined as 
$$ \hat{f}:= \arg \min_{f \in \R^n} \left\{ \norm{Y-f}^2_n+ 2 \lambda \norm{Df}_1 \right\},\ \lambda>0,$$

\item The \textbf{square root analysis estimator $\hat{f}_{\sqrt{}}$} of $f^0$, defined as
$$ \hat{f}_{\sqrt{}}:= \arg \min_{f \in \R^n} \left\{ \norm{Y-f}_n+ \lambda_0 \norm{Df}_1 \right\},\ \lambda_0>0.$$
\end{itemize}
In particular, Section \ref{analysis-s03} will focus on the study of the analysis estimator $\hat{f}$ for a general analysis operator $D$, while Section  \ref{analysis-s04} will deal with its square root counterpart $\hat{f}_{\sqrt{}}$. In Section \ref{analysis-s05} we will then apply the results of the two previous sections to total variation regularization.


\subsubsection{Definitions}

Let $D_{-S}^+$ denote the Moore-Penrose pseudoinverse of $D_{-S}$ and $d^+_i\in \R^n, i \in [m-s]$ the column vectors of $D^+_{-S}$.

We define the map $i^*: -S \mapsto [m-s]$, s.t. $i^*(i)= \sum_{j=1}^i 1_{\{j \in -S \}}$.
The index $i^*(i)$ denotes the  row index of the $i^{\text{th}}$ row of $D$, $i \in -S$, in the matrix $D_{-S}$.

We use a proof technique inspired by \cite{dala17}. The key aspect of this proof technique is to decompose the noise into two parts by using orthogonal projections:
\begin{itemize}
\item a part projected onto a low-rank linear subspace, which will be bounded by using the Cauchy-Schwarz inequality,
\item a remainder (i.e. the antiprojection), involving the weights defined below,  which will be bounded with more refined techniques.  These techniques involve, in the case of oracle inequalities with fast rates, the weak weighted compatibility constant. 
\end{itemize}

\begin{definition}[Weighted weak compatibility constant]\label{analysis-d04s01}
Let $\tilde{W} \in \R^{m \times m}$ be a diagonal matrix of weights with $\tilde{W}_S=\text{I}_s$ and $\norm{\tilde{W}}_\infty\le 1$ (e.g. as in Definition \ref{analysis-d03s01}). The weighted compatibility constant $\kappa^2(S, \tilde{W})$ is defined as
$$ \kappa^2(S,\tilde{W}):= r_S \min\left\{ \norm{f}^2_n: \norm{ D_S f}_1- \norm{\tilde{W}_{-S}D_{-S}f}_1=1 \right\}.$$

\end{definition}

\begin{remark}
This weighted compatibility constant extends the definition given by \cite{dala17} to the case of analysis estimators. A distinguishing feature is the factor $r_S$. When $S=S(f^0)=:S^0$, $r_{S_0}$ expresses the number of parameters to estimate . 

Note that the weak weighted compatibility constant relaxes the definition of compatibility constant given by \cite{hutt16}. There, one has to lower bound
$$ \frac{r_S \norm{f}^2_n}{\norm{D_Sf}_1^2},$$
while the weighted weak compatibility constant is applied to lower bound
$$ \frac{r_S \norm{f}^2_n}{(\norm{D_Sf}_1-\norm{\tilde{W}_{-S}D_{-S}f}_1)^2},$$
which is easier, since the denominator is smaller. The additional term $\norm{\tilde{W}_{-S}D_{-S}f}_1$ comes form the remainder term mentioned in the above sketch of the  proof technique used in this paper.

Note that bounds on the compatibility constant by \cite{hutt16} imply bounds on the weighted weak compatibility constant but the converse is not true. This is relevant, for instance, for the case of the total variation regularization over the path graph. In that case the bound by \cite{hutt16} is too rough. One can obtain more refined bounds by studying the weighted weak compatibility constant as done in \cite{dala17,orte18}. For instance, \cite{orte18} showed that the weighted weak compatibility constant also holds on a certain class of tree graphs. We will show in Section \ref{analysis-s05} a new bound on the weighted weak compatibility constant for the total variation regularized estimator over the cycle graph.
\end{remark}

\begin{definition}[Length of antiprojections]\label{analysis-d01s01}
In analogy to \cite{dala17}, the vector $ \omega\in \R^{m}$ is defined as
$$ \omega_i:= \begin{cases} \norm{d^+_{i^*(i)}}_n, & i\in -S,\\
0, & i \in S. \end{cases}$$
Moreover, we write $ \Omega:= \text{diag}( \{\omega_i\}_{i \in [m]})\in \R^{m\times m}$.
\end{definition}

\begin{note}
One can see that, if $D_{-S}$ is of full row rank,
$$ \omega_i^2= ((D_{-S}D_{-S}')^{-1}/n)_{i^*(i),i^*(i)},\ i \in -S.$$
\end{note}

We want to find a vector of weights with values in $[0,1]$, based on $\Omega$ defined above. We thus define hereafter the normalized scaling factor $\gamma$ as the maximum entry of $\Omega$.

\begin{definition}[Normalized inverse scaling factor]\label{analysis-d02s01}
In analogy to the quantity $\rho_T$ used by \cite{dala17} and to the scaling factor defined by \cite{hutt16}, the normalized inverse scaling factor $\gamma= \gamma(D, S(S))$ is defined as
$$ \gamma:= \max_{i \in -S} \omega_i .$$
\end{definition}
 We now normalize $\Omega$ by dividing its entries by $\gamma$ to obtain a vector of weights $w\in [0,1]^m$.
 
\begin{definition}[Weights]\label{analysis-d03s01}
In analogy to \cite{dala17}, the vector of weights $w \in \R^{m}$ is defined as
$$ w_i:= 1- \omega_i/\gamma,\ i \in [m].$$
Moreover, we write $ W:= \text{diag}( \{w_i\}_{i \in [m]})\in \R^{m\times m}$. Note that $W_S= \text{I}_s$.
\end{definition}

\section{Oracle inequalities for the analysis estimator}\label{analysis-s03}

In this section we study the analysis estimator, which is defined as
$$ \hat{f}:= \arg \min_{f \in \R^n} \left\{ \norm{Y-f}^2_n+ 2 \lambda \norm{Df}_1 \right\},\ \lambda>0.$$

This section produces analogous results to \cite{dala17}.  We however use an approach that does not take a detour via synthesis, but instead directly handles the analysis estimator. In Section \ref{analysis-s04} we are going to explore how this approach translates to the case of the square root analysis estimator.

\subsection{Fast rates with compatibility conditions}


\begin{theorem}[Oracle inequality with fast rates for the analysis estimator]\label{analysis-t01s03}
Let $S \in \mathcal{S}$ be arbitrary and $x, \ t >0$. Choose $\lambda \ge {\gamma\sigma }\sqrt{2\log(2(n-r_S))/n+ 2t/n} $. For the analysis estimator it holds that, $ \forall f \in \R^n$, with probability at least $1-e^{-x}-e^{-t}$,
\begin{equation*}
\norm{\hat{f}-f^0}^2_n  \le  \norm{f-f^0}^2_n + 4 \lambda \norm{D_{-S}f}_1 +  \left(   \sigma \sqrt{\frac{2x}{n}}+ \sigma \sqrt{\frac{r_S}{n}}+  \frac{\lambda \sqrt{r_S}}{\kappa(S,W)}\right)^2.
\end{equation*}
\end{theorem}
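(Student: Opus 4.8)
The plan is to run the standard basic-inequality argument for penalised least squares, the one non-routine ingredient being the control of the stochastic term by the projection device of \cite{dala17}, carried out for a general analysis operator $D$. First I would write the optimality inequality: since $\hat f$ minimises $\norm{Y-f}_n^2+2\lambda\norm{Df}_1$ and $Y=f^0+\epsilon$, comparing the objective at $\hat f$ and at the oracle $f$ gives, for every $f\in\R^n$,
\[\norm{\hat f-f^0}_n^2\le\norm{f-f^0}_n^2+\tfrac{2}{n}\epsilon'(\hat f-f)+2\lambda\norm{Df}_1-2\lambda\norm{D\hat f}_1 .\]
Everything then rests on bounding $\tfrac{2}{n}\epsilon'(\hat f-f)$. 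Following \cite{dala17}, I would split $\hat f-f$ via $\text{I}_n=\Pi_{\N_{-S}}+\Pi_{\N_{-S}^\perp}$ with $\N_{-S}=\N(D_{-S})$. For the low-rank part, Cauchy--Schwarz gives $\tfrac{2}{n}(\Pi_{\N_{-S}}\epsilon)'(\hat f-f)\le 2\norm{\Pi_{\N_{-S}}\epsilon}_n\norm{\Pi_{\N_{-S}}(\hat f-f)}_n$; since $\Pi_{\N_{-S}}$ has rank $r_S$, $\norm{\Pi_{\N_{-S}}\epsilon}_n^2$ is $(\sigma^2/n)\chi^2_{r_S}$-distributed, and Gaussian concentration yields an event $\mathcal{A}$ with $\pr(\mathcal{A})\ge 1-e^{-x}$ on which $\norm{\Pi_{\N_{-S}}\epsilon}_n\le\sigma\sqrt{r_S/n}+\sigma\sqrt{2x/n}$. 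This is the origin of the first two summands of the square.

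For the anti-projection part I would use $\Pi_{\N_{-S}^\perp}=D_{-S}^+D_{-S}$ to rewrite
\[\tfrac{2}{n}(\Pi_{\N_{-S}^\perp}\epsilon)'(\hat f-f)=\tfrac{2}{n}\sum_{i\in-S}\bigl((d^+_{i^*(i)})'\epsilon\bigr)\bigl(d_i'(\hat f-f)\bigr).\]
Each $(d^+_{i^*(i)})'\epsilon$ is a centred Gaussian with standard deviation $\sigma\sqrt n\,\omega_i$, so a maximal inequality over the relevant directions produces an event $\mathcal{T}$ with $\pr(\mathcal{T})\ge 1-e^{-t}$ on which $\abs{(d^+_{i^*(i)})'\epsilon}\le\sigma\sqrt n\,\omega_i\sqrt{2\log(2(n-r_S))+2t}$ for all $i\in-S$. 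On $\mathcal{T}$ the hypothesis on $\lambda$ forces $\sigma\sqrt{(2\log(2(n-r_S))+2t)/n}\le\lambda/\gamma$, and since $w_i=1-\omega_i/\gamma$ the displayed sum is at most $2\lambda(\norm{D_{-S}(\hat f-f)}_1-\norm{W_{-S}D_{-S}(\hat f-f)}_1)$; the negative weighted term is precisely what will feed the weighted weak compatibility constant.

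Plugging both bounds back and splitting $\norm{D\cdot}_1=\norm{D_S\cdot}_1+\norm{D_{-S}\cdot}_1$, the triangle inequality on the $-S$ block turns $\norm{D_{-S}f}_1+\norm{D_{-S}(\hat f-f)}_1-\norm{D_{-S}\hat f}_1$ into $4\lambda\norm{D_{-S}f}_1$ (the term $\norm{D_{-S}\hat f}_1$ being absorbed), while on the $S$ block $\norm{D_Sf}_1-\norm{D_S\hat f}_1\le\norm{D_S(\hat f-f)}_1$. Writing $\delta=\sigma\sqrt{r_S/n}+\sigma\sqrt{2x/n}$, this leaves, on $\mathcal{A}\cap\mathcal{T}$,
\[\norm{\hat f-f^0}_n^2\le\norm{f-f^0}_n^2+4\lambda\norm{D_{-S}f}_1+2\delta\,\norm{\Pi_{\N_{-S}}(\hat f-f)}_n+2\lambda\bigl(\norm{D_S(\hat f-f)}_1-\norm{W_{-S}D_{-S}(\hat f-f)}_1\bigr).\]
I would then invoke Definition \ref{analysis-d04s01}: applied to $g=\hat f-f$ it gives $\norm{D_S(\hat f-f)}_1-\norm{W_{-S}D_{-S}(\hat f-f)}_1\le\sqrt{r_S}\,\norm{\hat f-f}_n/\kappa(S,W)$ whenever the left-hand side is positive (and it is $\le 0$ otherwise), and with $\norm{\Pi_{\N_{-S}}(\hat f-f)}_n\le\norm{\hat f-f}_n$ the right-hand side becomes $\norm{f-f^0}_n^2+4\lambda\norm{D_{-S}f}_1+2\bigl(\delta+\lambda\sqrt{r_S}/\kappa(S,W)\bigr)\norm{\hat f-f}_n$. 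The statement follows from $\norm{\hat f-f}_n\le\norm{\hat f-f^0}_n+\norm{f-f^0}_n$ and a quadratic rearrangement (completing the square in $\norm{\hat f-f^0}_n$).

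The main obstacle, I expect, is the anti-projection step: one has to read off the correct variances $\sigma^2 n\omega_i^2$ and, above all, secure the event $\mathcal{T}$ with the dimension count $n-r_S=\rank(D_{-S})$ in the logarithm rather than the crude $m-s$ coming from a union bound over all rows of $D_{-S}$, since this count — together with the scaling factor $\gamma$ — is what calibrates the admissible $\lambda$ and hence the rate. This is where the synthesis argument of \cite{dala17} genuinely has to be redone for an analysis operator through the pseudoinverse $D_{-S}^+$, and it is what forces working with the weighted constant $\kappa(S,W)$ rather than an unweighted compatibility constant; the remaining manipulations are bookkeeping.
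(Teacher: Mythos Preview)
Your decomposition of the noise, the maximal-inequality bound on the anti-projection yielding the weighted $\ell_1$ term, the triangle-inequality bookkeeping on the penalty, and the call to $\kappa(S,W)$ all match the paper. The gap is at the two ends of the argument: the basic inequality you start from is too weak, and consequently your final ``quadratic rearrangement'' does not produce the stated bound.

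Comparing the objective at $\hat f$ and at $f$ only gives
\[
\norm{\hat f-f^0}_n^2 \le \norm{f-f^0}_n^2 + \tfrac{2}{n}\epsilon'(\hat f-f)+2\lambda(\norm{Df}_1-\norm{D\hat f}_1),
\]
whereas the paper (Lemma~\ref{analysis-l01s02}) uses the KKT conditions and the polarization identity $2(f-\hat f)'(f^0-\hat f)=\norm{\hat f-f}_n^2+\norm{\hat f-f^0}_n^2-\norm{f-f^0}_n^2$ to obtain the strictly stronger
\[
\norm{\hat f-f^0}_n^2 + \norm{\hat f-f}_n^2 \le \norm{f-f^0}_n^2 + \tfrac{2}{n}\epsilon'(\hat f-f)+2\lambda(\norm{Df}_1-\norm{D\hat f}_1).
\]
After your (correct) intermediate steps you reach $\norm{\hat f-f^0}_n^2 \le \norm{f-f^0}_n^2 + 4\lambda\norm{D_{-S}f}_1 + 2E\,\norm{\hat f-f}_n$ with $E=\delta+\lambda\sqrt{r_S}/\kappa(S,W)$. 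The paper has the same inequality with the extra $\norm{\hat f-f}_n^2$ on the left, and finishes in one line via $2E\,\norm{\hat f-f}_n\le E^2+\norm{\hat f-f}_n^2$, the $\norm{\hat f-f}_n^2$ cancelling. Your route---insert $\norm{\hat f-f}_n\le\norm{\hat f-f^0}_n+\norm{f-f^0}_n$ and complete the square---only yields $(\norm{\hat f-f^0}_n-E)^2\le(\norm{f-f^0}_n+E)^2+4\lambda\norm{D_{-S}f}_1$, which is \emph{not} the claimed inequality: already for $f=f^0$ and $\norm{D_{-S}f}_1=0$ it gives $\norm{\hat f-f^0}_n^2\le 4E^2$, a factor $4$ off. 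So either strengthen the starting inequality to the KKT/polarization version, or accept that you prove the theorem only up to multiplicative constants.
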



\begin{proof}[Proof of Theorem \ref{analysis-t01s03}]
See Appendix \ref{analysis-appC}.
\end{proof}


\subsection{Slow rates without compatibility conditions}

\begin{theorem}[Oracle inequality with slow rates for the analysis estimator]\label{analysis-t02s03}
Let $S \in \mathcal{S}$ be arbitrary and $x, \ t >0$. Choose $\lambda \ge  {\gamma\sigma }\sqrt{2\log(2(n-r_S))/n+ 2t/n} $. For the analysis estimator it holds that, $  \forall f \in \R^n$, with probability at least $1-e^{-x}-e^{-t}$,
\begin{equation*}
 \norm{\hat{f}-f^0}^2_n + 2 \lambda \norm{D_S \hat{f}}_1  \le  \norm{f-f^0}^2_n + \frac{\sigma^2}{n} \left(\sqrt{2x}+\sqrt{r_S} \right)^2+ 4 \lambda \norm{D f}_1.\\
\end{equation*}
\end{theorem}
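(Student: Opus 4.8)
The plan is to start from the basic inequality obtained from the definition of $\hat f$ as a minimizer. Writing the objective at $\hat f$ against the objective at an arbitrary competitor $f$, and using $Y = f^0 + \e$, expanding $\norm{Y - \hat f}_n^2 - \norm{Y - f}_n^2$ gives
$$ \norm{\hat f - f^0}_n^2 + 2\lambda \norm{D\hat f}_1 \le \norm{f - f^0}_n^2 + 2\lambda\norm{Df}_1 + \frac{2}{n}\e'(\hat f - f),$$
so the whole game is to control the stochastic term $\frac{2}{n}\e'(\hat f - f)$ and then absorb it. I would split $\norm{D\hat f}_1 = \norm{D_S \hat f}_1 + \norm{D_{-S}\hat f}_1$ on the left, aiming to keep the $2\lambda\norm{D_S\hat f}_1$ term; the $\norm{D_{-S}\hat f}_1$ term will be dominated by the empirical process bound.

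The key step is the decomposition of the noise along the lines sketched in the paper: write $\hat f - f = g$, and decompose $g = g_{\N_{-S}} + g_{\N_{-S}^\perp}$ via the orthogonal projections $\Pi_{\N_{-S}}$ and $\Pi_{\N_{-S}^\perp} = A_{\N_{-S}}$ from Subsection \ref{analysis-s01ss03}. Then
$$ \frac{2}{n}\e' g = \frac{2}{n}\e' \Pi_{\N_{-S}} g + \frac{2}{n}\e' A_{\N_{-S}} g.$$
For the first (low-rank) part, Cauchy–Schwarz gives $\frac{2}{n}\e'\Pi_{\N_{-S}}g \le 2\norm{\Pi_{\N_{-S}}\e}_n \norm{g}_n$; since $\dim \N_{-S} = r_S$, the quantity $\norm{\Pi_{\N_{-S}}\e}_n^2$ concentrates, and the Gaussian tail bound yields $\norm{\Pi_{\N_{-S}}\e}_n \le \frac{\sigma}{\sqrt n}(\sqrt{2x} + \sqrt{r_S})$ with probability at least $1 - e^{-x}$ (this is where the $\sigma^2(\sqrt{2x}+\sqrt{r_S})^2/n$ term is born). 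For the antiprojection part, I would use the identity $A_{\N_{-S}} g = \Pi_{\N_{-S}^\perp} g \in \mathrm{rowspan}(D_{-S})$, hence $A_{\N_{-S}}g = D_{-S}^+ D_{-S} g$, so that
$$ \frac{2}{n}\e' A_{\N_{-S}} g = \frac{2}{n}(D_{-S}^{+\prime}\e)'(D_{-S}g) \le 2\max_{i}\frac{\abs{(D_{-S}^{+\prime}\e)_i}}{n\,\omega_i}\cdot\norm{\Omega_{-S}^{-1}\cdot}\ldots$$
— more cleanly, bound it by $2\big(\max_{i\in -S}\omega_i^{-1}\abs{d_{i^*(i)}^{+\prime}\e}/n\big)\sum_{i\in-S}\omega_i\abs{d_i'g}$ and then note each $d_{i^*(i)}^{+\prime}\e/n$ is Gaussian with variance $\sigma^2\omega_{i}^2/n$, so a union bound over the $m - r_S \le n - r_S$ relevant coordinates (the rank of $D_{-S}$ is $n - r_S$) gives, with probability at least $1 - e^{-t}$,
$$ \max_{i\in -S}\frac{\abs{d_{i^*(i)}^{+\prime}\e}}{n\,\omega_i} \le \gamma^{-1}\gamma\sigma\sqrt{2\log(2(n-r_S))/n + 2t/n}\cdot\gamma^{-1}\ldots \le \lambda/\gamma\cdot(\text{const}),$$
arranged precisely so that the choice $\lambda \ge \gamma\sigma\sqrt{2\log(2(n-r_S))/n + 2t/n}$ makes this maximum at most $\lambda$ (the factor $\gamma$ and the weights $\omega_i$ cancel appropriately); then $\frac2n \e' A_{\N_{-S}}g \le 2\lambda\sum_{i\in -S}\abs{d_i'g} = 2\lambda\norm{D_{-S}g}_1 \le 2\lambda\norm{D_{-S}\hat f}_1 + 2\lambda\norm{D_{-S}f}_1$.

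Assembling everything: on the intersection event of probability at least $1 - e^{-x} - e^{-t}$,
$$ \norm{\hat f - f^0}_n^2 + 2\lambda\norm{D_S\hat f}_1 + 2\lambda\norm{D_{-S}\hat f}_1 \le \norm{f-f^0}_n^2 + 2\lambda\norm{Df}_1 + 2\norm{\Pi_{\N_{-S}}\e}_n\norm{g}_n + 2\lambda\norm{D_{-S}\hat f}_1 + 2\lambda\norm{D_{-S}f}_1.$$
The $2\lambda\norm{D_{-S}\hat f}_1$ terms cancel; $2\lambda\norm{D_{-S}f}_1 \le 2\lambda\norm{Df}_1$ folds into the $4\lambda\norm{Df}_1$; and for the cross term I would use $\norm{g}_n \le \norm{\hat f - f^0}_n + \norm{f - f^0}_n$ together with $2ab \le a^2 + b^2$-type manipulations — more efficiently, bound $2\norm{\Pi_{\N_{-S}}\e}_n\norm{\hat f - f^0}_n \le \norm{\hat f - f^0}_n^2 + \norm{\Pi_{\N_{-S}}\e}_n^2$ to absorb the $\norm{\hat f - f^0}_n^2$ on the left, leaving the stated $\frac{\sigma^2}{n}(\sqrt{2x}+\sqrt{r_S})^2$ term (one has to be a little careful to also handle the $2\norm{\Pi_{\N_{-S}}\e}_n\norm{f-f^0}_n$ piece, which again splits by Young's inequality into a multiple of $\norm{f-f^0}_n^2$ and a noise term; the final bookkeeping of constants is exactly what produces the clean statement). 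The main obstacle — and the only genuinely delicate point — is the antiprojection bound: getting the weights $\omega_i$, the scaling factor $\gamma$, and the union bound over $n - r_S$ Gaussians to line up so that the threshold on $\lambda$ comes out exactly as stated, and making sure the rank count $\mathrm{rank}(D_{-S}) = n - r_S$ is used correctly so the logarithm is $\log(2(n-r_S))$ rather than $\log(2m)$. Everything else is the standard basic-inequality-plus-Young's-inequality routine.
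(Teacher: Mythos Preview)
Your empirical-process decomposition and the union-bound step are essentially the paper's Lemma~\ref{analysis-l02s02}, so that part is fine. The gap is in the basic inequality you start from. By merely comparing the objective at $\hat f$ with the objective at $f$, you obtain
\[
\norm{\hat f - f^0}_n^2 + 2\lambda\norm{D\hat f}_1 \le \norm{f-f^0}_n^2 + 2\lambda\norm{Df}_1 + \tfrac{2}{n}\e'(\hat f - f),
\]
whereas the paper (Lemma~\ref{analysis-l01s02}) uses the KKT conditions and the polarization identity to get the strictly stronger
\[
\norm{\hat f - f^0}_n^2 + \norm{\hat f - f}_n^2 \le \norm{f-f^0}_n^2 + \tfrac{2}{n}\e'(\hat f - f) + 2\lambda(\norm{Df}_1 - \norm{D\hat f}_1).
\]
That extra $\norm{\hat f - f}_n^2$ on the left is not a luxury: after bounding the low-rank piece by $2\sigma(\sqrt{2x/n}+\sqrt{r_S/n})\norm{\hat f - f}_n$ and applying $2ab\le a^2+b^2$, one produces exactly $\norm{\hat f - f}_n^2$ on the right, which then cancels cleanly against the left side, leaving $\norm{\hat f-f^0}_n^2$ intact and the noise term with coefficient~$1$.

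With your weaker inequality this cancellation is unavailable. Your suggested fix---triangle-inequality $\norm{\hat f - f}_n \le \norm{\hat f - f^0}_n + \norm{f - f^0}_n$ and then Young on each piece---either (i) kills the entire $\norm{\hat f - f^0}_n^2$ on the left (so you have nothing to conclude about it), or (ii) with asymmetric Young parameters, inflates the coefficients in front of $\norm{f-f^0}_n^2$ and the noise term. In either case the ``final bookkeeping of constants'' you allude to cannot reproduce the stated inequality with leading constant~$1$ on both $\norm{\hat f - f^0}_n^2$ and $\norm{f-f^0}_n^2$ and with the single term $\frac{\sigma^2}{n}(\sqrt{2x}+\sqrt{r_S})^2$. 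The fix is simply to derive the basic inequality from the KKT conditions as in Lemma~\ref{analysis-l01s02}; once you have the $\norm{\hat f - f}_n^2$ term, the rest of your argument goes through verbatim and matches the paper.
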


\begin{proof}[Proof of Theorem \ref{analysis-t02s03}]
See Appendix \ref{analysis-appC}.
\end{proof}

\begin{remark}
Theorem \ref{analysis-t02s03} does not need the assumption that the (weighted) compatibility constant is bounded away from zero.
\end{remark}

\section{Oracle inequalities for the square root  analysis estimator}\label{analysis-s04}

In this section we study the square root analysis estimator, defined as
$$ \hat{f}_{\sqrt{}}:= \arg \min_{f \in \R^n} \left\{ \norm{Y-f}_n+ \lambda_0 \norm{Df}_1 \right\},\ \lambda_0>0.$$

Throughout this section we will make use of the following assumption.
\begin{assumption}\label{analysis-a01s02}
Assume for some $a>0$ that $n>8a$ and that for some $R>0, \eta\in (0,1)$

$$ \lambda_0\ge \frac{1}{1-\eta} R \text{ and } \norm{Df^0}_1\le c \sigma \sqrt{1-\sqrt{8a/n}}/ \lambda_0,$$
where,
$$c = \sqrt{\left(\frac{\eta}{2}- \frac{\sqrt{r_S}+\sqrt{2a}}{\sqrt{n-\sqrt{8an}}} \right)^2+4}-2.$$
We assume that $S \in \cals$ is s.t.
$${\eta}> 2 \frac{\sqrt{r_S}+\sqrt{2a}}{\sqrt{n-\sqrt{8an}}}.$$
\end{assumption}

\begin{note}
 Assumption \ref{analysis-a01s02} is also an assumption on $S$ and will thus  be a criterion to determine for which $S \in \cals$ our (oracle) results hold.
\end{note}

For the square root analysis estimator, to get the KKT conditions we have to make sure that $\he:= Y-\hat{f}\not=0$, i.e. that the estimator does not overfit.

The following lemma, showing that $\norm{\he}_n>0$, is an adaptation of Lemma 3.1 in \cite{vand16} to the case of the square root analysis estimator where the increments of the empirical process are bounded by the projection arguments found in \cite{dala17}.

\begin{lemma}\label{analysis-l03s02}
Let $S \in \mathcal{S}$ be an arbitrary active set satisfying Assumption \ref{analysis-a01s02} and let $a >0$. Choose $R \ge  \gamma \sqrt{\frac{2 \log(2(n-r_S))+2t}{n-1}},\ t \in (0, {(n-1)}/{2}-\log(2(n-r_S)))$.   Under Assumption \ref{analysis-a01s02} we have that, with probability at least $1-3e^{-a}-e^{-t}$,
$$ \left|\frac{\neh_n}{\nee_n}-1\right| \le \eta.$$
\end{lemma}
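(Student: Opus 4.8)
The plan is to mimic the argument of Lemma 3.1 in \cite{vand16}, replacing the synthesis structure by the analysis/projection machinery. The quantity to control is $\neh_n/\nee_n$ where $\he = Y - \hat f_{\sqrt{}}$ and $\e = Y - f^0$. I would first exploit the defining (KKT-free) inequality for the square root analysis estimator: since $\hat f_{\sqrt{}}$ minimizes $\norm{Y-f}_n + \lambda_0\norm{Df}_1$, plugging in $f = f^0$ gives
\begin{equation*}
\neh_n + \lambda_0 \norm{D\hat f_{\sqrt{}}}_1 \le \nee_n + \lambda_0 \norm{Df^0}_1 .
\end{equation*}
This immediately yields the \emph{upper} bound $\neh_n \le \nee_n + \lambda_0\norm{Df^0}_1$, and Assumption \ref{analysis-a01s02} bounds $\lambda_0\norm{Df^0}_1$ in terms of $\sigma$; combined with a lower bound on $\nee_n$ of the form $\nee_n \ge \sigma\sqrt{1-\sqrt{8a/n}}$ (a standard Gaussian concentration bound on $\norm{\e}_n^2 = \sum \e_i^2/n$, valid with probability $1-e^{-a}$, using $n > 8a$), this gives $\neh_n/\nee_n \le 1 + c \le 1+\eta$ after the algebra built into the definition of $c$. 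That handles one side.

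The harder side is the \emph{lower} bound $\neh_n \ge (1-\eta)\nee_n$, i.e. showing the estimator does not overfit. Here I would go back to the two-point inequality and, using $Y = f^0 + \e$, expand $\neh_n^2 = \norm{f^0 - \hat f_{\sqrt{}}}_n^2 + 2\langle \e, f^0 - \hat f_{\sqrt{}}\rangle_n + \nee_n^2$, so that controlling $\neh_n$ from below reduces to controlling the cross term $\langle \e, \hat f_{\sqrt{}} - f^0\rangle_n$. This is the empirical-process increment, and it is exactly where the projection argument of \cite{dala17} enters: I would decompose $\hat f_{\sqrt{}} - f^0 = (\hat f_{\sqrt{}}-f^0)_{\N_{-S}} + (\hat f_{\sqrt{}}-f^0)_{\N_{-S}^\perp}$, bound the low-rank part via Cauchy--Schwarz by $\sigma\sqrt{r_S/n}\cdot\norm{(\hat f_{\sqrt{}}-f^0)_{\N_{-S}}}_n$ after a chi-square tail bound (probability $1-e^{-a}$) on the norm of $\e$ projected onto the $r_S$-dimensional space $\N_{-S}$, and bound the antiprojection part by $\max_i |\langle A_{\N_{-S}}\e, d_i^+\rangle| \cdot \norm{W_{-S}D_{-S}(\hat f_{\sqrt{}}-f^0)}_1$, where a maximal inequality over $m - r_S$ Gaussians (the choice of $R$ with the $\log(2(n-r_S))$ term and the $1/(n-1)$ scaling is tailored to this, giving probability $1-e^{-t}$) dominates the coefficients by $R\,\neh_n$ (the $\neh_n$ appears because the relevant Gaussians are normalized by $\nee_n$ and we are on the event where $\neh_n \asymp \nee_n$). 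The $\norm{D_{-S}(\hat f_{\sqrt{}}-f^0)}_1$ term is then absorbed using the triangle inequality and the penalty term from the basic inequality, together with $\lambda_0 \ge R/(1-\eta)$.

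The main obstacle I anticipate is the bookkeeping in this self-referential estimate: the bound on the cross term involves $\neh_n$ itself (through the normalization of the Gaussian maximal inequality), so after collecting terms one obtains an inequality of the shape $\neh_n^2 \ge \nee_n^2 - (\text{const})\,\nee_n\neh_n - (\text{const})\,\neh_n^2$, which must be rearranged into a quadratic inequality in the ratio $\neh_n/\nee_n$ whose admissible region, under the precise numerical constraint ${\eta} > 2(\sqrt{r_S}+\sqrt{2a})/\sqrt{n-\sqrt{8an}}$ in Assumption \ref{analysis-a01s02}, forces $\neh_n/\nee_n \ge 1-\eta$. Getting the constants to line up so that the two-sided bound closes with total probability $1 - 3e^{-a} - e^{-t}$ (three $e^{-a}$ events: the lower and upper bounds on $\nee_n$, and the chi-square bound on the $r_S$-dimensional projection of $\e$; one $e^{-t}$ event: the maximal inequality) is the delicate part; the rest is the routine projection estimate already developed for Theorems \ref{analysis-t01s03} and \ref{analysis-t02s03}.
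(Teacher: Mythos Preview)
Your upper bound and the overall strategy are right, and your identification of the empirical-process decomposition is exactly what the paper does. But the lower bound argument has a real structural gap, not just bookkeeping.

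When you bound the cross term $\e'(\hat f_{\sqrt{}}-f^0)/n$, the antiprojection part gives a factor $R\nee_n$ (not $R\neh_n$; the event $\RR$ is defined with $\nee_n$ in the denominator). Plugging this into the squared minimizer inequality and expanding, the coefficient of $\norm{D\hat f_{\sqrt{}}}_1$ on the right-hand side is $2R\nee_n - 2\lambda_0\neh_n$. To make this nonpositive and kill the unbounded $\norm{D\hat f_{\sqrt{}}}_1$ term, you need $\lambda_0\neh_n \ge R\nee_n$, which via $\lambda_0 \ge R/(1-\eta)$ requires $\neh_n \ge (1-\eta)\nee_n$---exactly the conclusion you are trying to prove. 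Your remark that ``we are on the event where $\neh_n \asymp \nee_n$'' is circular for the same reason. The resulting quadratic inequality therefore does not close.

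The paper (following Lemma~3.1 of \cite{vand16}) breaks this circularity by a localization trick: it replaces $\hat f_{\sqrt{}}$ by the convex combination $\hat f_t := t\hat f_{\sqrt{}} + (1-t)f^0$ with $t = \eta\nee_n/(\eta\nee_n + \norm{\hat f_{\sqrt{}}-f^0}_n)$, which \emph{forces} $\norm{\hat f_t - f^0}_n \le \eta\nee_n$ and hence $\norm{\e - (\hat f_t - f^0)}_n \ge (1-\eta)\nee_n$ \emph{a priori}. This deterministic lower bound is what allows the penalty term $-2\lambda_0(1-\eta)\nee_n\norm{D\hat f_t}_1$ to absorb $2R\nee_n\norm{D\hat f_t}_1$. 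One then obtains a clean bound $\norm{\hat f_t - f^0}_n \le q\nee_n$ (with $q$ as in Assumption~\ref{analysis-a01s02}) and unwinds the definition of $t$ to transfer this to $\norm{\hat f_{\sqrt{}}-f^0}_n$. You should add this convex-combination step; without it the argument does not go through. (A minor point: the three $e^{-a}$ events are the upper and lower chi-square bounds on $\norm{\Pi_{\N_{-S}}\e}_2^2$ and the lower bound on $\norm{A_{\N_{-S}}\e}_2^2$; no upper bound on $\nee_n$ is needed for this lemma.)
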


\begin{proof}[Proof of Lemma \ref{analysis-l03s02}]
See Appendix \ref{analysis-appD}.
\end{proof}

\begin{remark}
While Lemma 3.1 by \cite{vand16} only requires a  lower bound on $\nee_n$, Lemma \ref{analysis-l03s02} presented here requires that $\norm{\Pi_{\N(D_{-S})}\e}_n$ is upper and lower bounded and that $\norm{A_{\N(D_{-S})}\e}_n$  is lower bounded. It is the price to pay for a more refined technique to handle the increments of the empirical process.
\end{remark}

\begin{Coro}\label{analysis-c01s02}
Let $S \in \mathcal{S}$ be an arbitrary active set satisfying Assumption \ref{analysis-a01s02} and let $a >0$. Choose $R \ge  \gamma \sqrt{\frac{2 \log(2(n-r_S))+2t}{n-1}},\ t \in (0, {(n-1)}/{2}-\log(2(n-r_S)))$. Under Assumption \ref{analysis-a01s02}, we have that, with probability at least $1-3e^{-a}-e^{-t}$, $ \lambda_0 \neh_n \ge R \nee_n$.
\end{Coro}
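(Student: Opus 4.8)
The plan is to deduce the corollary directly from Lemma \ref{analysis-l03s02}, essentially as a one-line consequence. First I would invoke Lemma \ref{analysis-l03s02} with the same active set $S$, the same $a>0$, and the same $R$, $t$, $\eta$: the hypotheses of the corollary (the validity of Assumption \ref{analysis-a01s02} for $S$, the choice $R \ge \gamma \sqrt{(2\log(2(n-r_S))+2t)/(n-1)}$, and the range $t \in (0, (n-1)/2 - \log(2(n-r_S)))$) are exactly those of the lemma. Hence, on an event $\mathcal{A}$ of probability at least $1-3e^{-a}-e^{-t}$, we have $|\neh_n/\nee_n - 1| \le \eta$. Since $\e \sim \N_n(0,\sigma^2\text{I}_n)$ with $\sigma>0$, we have $\nee_n>0$ almost surely, so the ratio is well defined; the lower inequality then reads $\neh_n \ge (1-\eta)\nee_n$ on $\mathcal{A}$.

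Next I would use the first part of Assumption \ref{analysis-a01s02}, namely $\lambda_0 \ge R/(1-\eta)$, which is equivalent to $(1-\eta)\lambda_0 \ge R$ because $\eta \in (0,1)$. Multiplying the bound $\neh_n \ge (1-\eta)\nee_n$ through by $\lambda_0>0$ gives, on the same event $\mathcal{A}$,
$$\lambda_0 \neh_n \ge (1-\eta)\lambda_0 \nee_n \ge R \nee_n,$$
which is the asserted inequality, and it holds with probability at least $1-3e^{-a}-e^{-t}$ as claimed.

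There is no genuine obstacle in this argument; it is a bookkeeping step whose only content is matching the constants and the probabilistic event between the lemma and the corollary, which agree by construction. The one point worth stating explicitly is that $\nee_n>0$ a.s., so that the conclusion of Lemma \ref{analysis-l03s02} (and in particular its lower half) can be used as a genuine inequality between norms. I would also note in passing that the upper half $\neh_n \le (1+\eta)\nee_n$ is not needed here, but will be the relevant ingredient later when controlling $\neh_n$ from above in the oracle inequalities for $\hat f_{\sqrt{}}$.
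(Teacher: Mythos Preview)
Your proposal is correct and matches the paper's own proof essentially line for line: invoke Lemma \ref{analysis-l03s02} to get $\neh_n\ge(1-\eta)\nee_n$ on the event of probability at least $1-3e^{-a}-e^{-t}$, then combine with $\lambda_0\ge R/(1-\eta)$ from Assumption \ref{analysis-a01s02}. The only cosmetic difference is that the paper rewrites the first inequality as $\frac{1}{1-\eta}\ge \frac{\nee_n}{\neh_n}$ before inserting it into the assumption, whereas you multiply through by $\lambda_0$; the two are trivially equivalent.
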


\begin{proof}[Proof of Corollary \ref{analysis-c01s02}]
Under Assumption \ref{analysis-a01s02} on $\A\cap \RR$ Lemma \ref{analysis-l03s02} holds and thus $\neh_n \ge (1-\eta) \nee_n$.
It follows that $ \frac{1}{1-\eta}\ge \frac{\nee_n}{\neh_n}$.
By inserting this inequality into the assumption $ \lambda_0 \ge \frac{1}{1-\eta} R$ we get the claim.
\end{proof}



We now expose  oracle inequalities for the square root analysis estimator with fast and slow rates. The results are similar to Theorems \ref{analysis-t01s03} and \ref{analysis-t02s03} up to the constants and the assumptions one has to make.

\subsection{Fast rates with compatibility conditions}


\begin{theorem}[Oracle inequality with fast rates for the square root analysis estimator]\label{analysis-t01s04}
Let $S \in \mathcal{S}$ be an arbitrary active set satisfying Assumption \ref{analysis-a01s02} and let $a >0$. For $\eta \in (0,1)$, choose $\lambda_0 \ge\frac{1}{1-\eta}\gamma \sqrt{\frac{2 \log(2(n-r_S))+2t}{n-1}},\ t \in (0, {(n-1)}/{2}-\log(2(n-r_S)))$. Under  Assumption \ref{analysis-a01s02},  $\forall f \in \R^n$, it holds that, with probability at least $1-4e^{-a}-e^{-t}$,

\begin{equation*}
\norm{\hat{f}_{\sqrt{}}-f^0}^2_n  \le  \norm{f-f^0}^2_n + 16\sigma \lambda_0 \norm{D_{-S}f}_1 + \sigma^2\left(    \sqrt{\frac{2a}{n}}+  \sqrt{\frac{r_S}{n}}+  \frac{4 \lambda_0  \sqrt{r_S}}{\kappa(S,W)}\right)^2.
\end{equation*}
\end{theorem}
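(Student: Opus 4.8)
The plan is to run the argument behind Theorem~\ref{analysis-t01s03}, but with the fixed tuning parameter $\lambda$ replaced throughout by the random \emph{effective tuning} $\lambda_\ast:=\lambda_0\norm{Y-\hat{f}_{\sqrt{}}}_n$, controlling $\lambda_\ast$ through Lemma~\ref{analysis-l03s02} and Corollary~\ref{analysis-c01s02}. First I would set $R:=\gamma\sqrt{(2\log(2(n-r_S))+2t)/(n-1)}$, so that the hypothesis on $\lambda_0$ becomes $\lambda_0\ge R/(1-\eta)$ --- the first half of Assumption~\ref{analysis-a01s02} --- whence Lemma~\ref{analysis-l03s02} and Corollary~\ref{analysis-c01s02} apply. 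I would work on the event $\mathcal{E}$ obtained by intersecting their common event with one further event carrying a standard $\chi^2_n$ upper deviation bound for $n\norm{\epsilon}_n^2/\sigma^2$; a union bound gives $\pr(\mathcal{E})\ge 1-4e^{-a}-e^{-t}$, the extra $e^{-a}$ being this last event (the upper bound on $\norm{\Pi_{\N(D_{-S})}\epsilon}_n$ needed below is already part of the event of Lemma~\ref{analysis-l03s02}). On $\mathcal{E}$, Lemma~\ref{analysis-l03s02} gives $\norm{Y-\hat{f}_{\sqrt{}}}_n\ge(1-\eta)\norm{\epsilon}_n>0$, so $\hat{f}_{\sqrt{}}$ does not overfit and satisfies the KKT conditions $(Y-\hat{f}_{\sqrt{}})/n=\lambda_\ast D'\hat z$ with $\hat z\in\partial\norm{D\hat{f}_{\sqrt{}}}_1$; moreover, using $n>8a$, the $\chi^2_n$ bound yields $\norm{\epsilon}_n\le 2\sigma$, hence $\lambda_\ast\le(1+\eta)\lambda_0\norm{\epsilon}_n\le 4\sigma\lambda_0$, while Corollary~\ref{analysis-c01s02} gives $\lambda_\ast\ge R\norm{\epsilon}_n$. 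These two‑sided controls of $\lambda_\ast$ are exactly what will produce the constants $16$ and $4$ in the statement.

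Next I would write down the basic inequality. From the KKT identity, writing $Y-\hat{f}_{\sqrt{}}=\epsilon-(\hat{f}_{\sqrt{}}-f^0)$ and using the polarization identity for $\langle\hat{f}_{\sqrt{}}-f^0,\hat{f}_{\sqrt{}}-f\rangle_n$, one obtains, for every $f\in\R^n$ and exactly as in the proof of Theorem~\ref{analysis-t01s03} (with $\lambda$ replaced by $\lambda_\ast$),
\[
\norm{\hat{f}_{\sqrt{}}-f^0}_n^2+\norm{\hat{f}_{\sqrt{}}-f}_n^2\le\norm{f-f^0}_n^2+2\langle\epsilon,\hat{f}_{\sqrt{}}-f\rangle_n+2\lambda_\ast\bigl(\norm{Df}_1-\norm{D\hat{f}_{\sqrt{}}}_1\bigr).
\]
It is essential here to \emph{keep} the term $\norm{\hat{f}_{\sqrt{}}-f}_n^2$ on the left rather than discard it; it will absorb the linear‑in‑$\norm{\hat{f}_{\sqrt{}}-f}_n$ contributions in the last step.

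Then I would bound $2\langle\epsilon,\hat{f}_{\sqrt{}}-f\rangle_n$ by the projection technique of \cite{dala17}, splitting $\hat{f}_{\sqrt{}}-f$ along $\N(D_{-S})$ and $\N^\perp(D_{-S})$. For the low‑rank part, Cauchy--Schwarz together with the bound on $\norm{\Pi_{\N(D_{-S})}\epsilon}_n$ already in the event of Lemma~\ref{analysis-l03s02} gives a term $2(\sigma\sqrt{r_S/n}+\sigma\sqrt{2a/n})\norm{\hat{f}_{\sqrt{}}-f}_n$. For the remainder I would use $\Pi_{\N^\perp(D_{-S})}=D_{-S}^+D_{-S}$ to write $\langle\epsilon,\Pi_{\N^\perp(D_{-S})}(\hat{f}_{\sqrt{}}-f)\rangle_n=\sum_{i\in -S}\langle d^+_{i^*(i)},\epsilon\rangle_n\,d_i'(\hat{f}_{\sqrt{}}-f)$, and the Gaussian maximal inequality underlying the choice of $R$ (the event $\RR$ of Corollary~\ref{analysis-c01s02}, probability $\ge 1-e^{-t}$) to bound $|\langle d^+_{i^*(i)},\epsilon\rangle_n|\le(1-w_i)R\norm{\epsilon}_n\le(1-w_i)\lambda_\ast$ for all $i\in -S$ (using $\omega_i/\gamma=1-w_i$ and Corollary~\ref{analysis-c01s02}), so that the remainder is at most $\lambda_\ast\norm{(\mathrm{I}-W)_{-S}D_{-S}(\hat{f}_{\sqrt{}}-f)}_1$. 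Inserting these into the basic inequality and collecting the $\lambda_\ast$‑terms, I would use the \emph{exact} identity $\norm{(\mathrm{I}-W)_{-S}v}_1=\norm{v}_1-\norm{W_{-S}v}_1$ (being exact, not crude, here is what keeps the constant equal to $16$) together with triangle inequalities on $D_S$ and on $D_{-S}\hat{f}_{\sqrt{}}$ to bound the combined $\lambda_\ast$‑term by $2\lambda_\ast\bigl(\norm{D_S(\hat{f}_{\sqrt{}}-f)}_1-\norm{W_{-S}D_{-S}(\hat{f}_{\sqrt{}}-f)}_1\bigr)+4\lambda_\ast\norm{D_{-S}f}_1$, and then Definition~\ref{analysis-d04s01} to bound the first bracket by $\tfrac{\sqrt{r_S}}{\kappa(S,W)}\norm{\hat{f}_{\sqrt{}}-f}_n$ when it is positive (trivially otherwise).

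Combining everything with $\lambda_\ast\le 4\sigma\lambda_0$ then gives, with $B:=\sigma\sqrt{2a/n}+\sigma\sqrt{r_S/n}+\tfrac{4\sigma\lambda_0\sqrt{r_S}}{\kappa(S,W)}$,
\[
\norm{\hat{f}_{\sqrt{}}-f^0}_n^2+\norm{\hat{f}_{\sqrt{}}-f}_n^2\le\norm{f-f^0}_n^2+16\sigma\lambda_0\norm{D_{-S}f}_1+2B\,\norm{\hat{f}_{\sqrt{}}-f}_n,
\]
and I would finish by completing the square: since $\norm{\hat{f}_{\sqrt{}}-f}_n^2-2B\norm{\hat{f}_{\sqrt{}}-f}_n=(\norm{\hat{f}_{\sqrt{}}-f}_n-B)^2-B^2\ge-B^2$, the kept quadratic term on the left swallows the linear term up to $-B^2$, leaving $\norm{\hat{f}_{\sqrt{}}-f^0}_n^2\le\norm{f-f^0}_n^2+16\sigma\lambda_0\norm{D_{-S}f}_1+B^2$, which is the assertion since $B^2=\sigma^2\bigl(\sqrt{2a/n}+\sqrt{r_S/n}+4\lambda_0\sqrt{r_S}/\kappa(S,W)\bigr)^2$. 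I expect the main difficulty to lie not in any single inequality but in this bookkeeping: the clean additive form of the conclusion relies on (i) retaining $\norm{\hat{f}_{\sqrt{}}-f}_n^2$ on the left of the KKT‑based basic inequality, (ii) the sandwich $R\norm{\epsilon}_n\le\lambda_\ast\le 4\sigma\lambda_0$, and (iii) the exact weight identity; loosening any of these inflates the constants. A secondary point to verify carefully is that the KKT conditions are legitimate at all, which is precisely what Lemma~\ref{analysis-l03s02} buys.
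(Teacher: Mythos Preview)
Your proposal is correct and follows essentially the same route as the paper: basic inequality from the KKT conditions (legitimate on the event of Lemma~\ref{analysis-l03s02}), the projection bound on the empirical process with the weight identity, compatibility, and then the sandwich $R\norm{\epsilon}_n\le\lambda_\ast\le 4\sigma\lambda_0$ to fix the constants before absorbing the linear term via $2ab\le a^2+b^2$. The only cosmetic difference is that the paper upper bounds $\norm{\epsilon}_n$ by adding a $\chi^2_{n-r_S}$ upper tail for $\norm{A_{\N(D_{-S})}\epsilon}_2^2$ to the projection bound already in $\A$ (their event $\A'$), whereas you invoke a direct $\chi^2_n$ upper tail for $n\norm{\epsilon}_n^2/\sigma^2$; both cost one extra $e^{-a}$ and both yield $(1+\eta)\norm{\epsilon}_n/\sigma\le 4$ under $n>8a$.
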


\begin{proof}[Proof of Theorem \ref{analysis-t01s04}]
See Appendix \ref{analysis-appD}.
\end{proof}

\subsection{Slow rates without compatibility conditions}

\begin{theorem}[Oracle inequality with slow rates for the square root analysis estimator]\label{analysis-t02s04}
Let $S \in \mathcal{S}$ be an arbitrary active set satisfying Assumption \ref{analysis-a01s02} and let $a >0$. For $\eta \in (0,1)$, choose $\lambda_0 \ge\frac{1}{1-\eta}\gamma \sqrt{\frac{2 \log(2(n-r_S))+2t}{n-1}},\ t \in (0, {(n-1)}/{2}-\log(2(n-r_S)))$. Under Assumption \ref{analysis-a01s02}, $\forall f \in \R^n$,  it holds that, with probability at least $1-4e^{-a}-e^{-t}$,
\begin{align*}
&\norm{\hat{f}_{\sqrt{}}-f^0}^2_n + 2(1-\eta)\sqrt{1-\sqrt{\frac{8a}{n}}}  \sigma \lambda_0  \norm{D_S \hat{f}_{\sqrt{}}}_1\\
&\le \norm{f-f^0}^2_n + \frac{\sigma^2}{n} \left(\sqrt{2a}+\sqrt{r_S} \right)^2 + 16 \sigma \lambda_0 \norm{D f}_1.\end{align*}
\end{theorem}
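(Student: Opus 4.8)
The plan is to mimic the argument for the plain analysis estimator (Theorem~\ref{analysis-t02s03}), but on the event where the square root estimator is well-behaved, and then rescale. First I would work on the intersection of the good events from Lemma~\ref{analysis-l03s02} and Corollary~\ref{analysis-c01s02}, which has probability at least $1-3e^{-a}-e^{-t}$, together with one more Gaussian concentration event (costing an additional $e^{-a}$), giving the stated $1-4e^{-a}-e^{-t}$. On this event $\he_{\sqrt{}}:=Y-\hat f_{\sqrt{}}\ne 0$, so the KKT conditions for the square root problem hold: writing them out, the subgradient condition reads $D_S'(\cdot)$-type bounds with effective tuning parameter $\lambda_0\neh_n$ in place of $\lambda$. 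By Corollary~\ref{analysis-c01s02}, $\lambda_0\neh_n\ge R\nee_n$, which is exactly the lower bound needed to dominate the noise term $\gamma\sigma\sqrt{2\log(2(n-r_S))/n+\cdots}$ appearing in the projection argument of \cite{dala17}; and by Lemma~\ref{analysis-l03s02}, $\neh_n\le(1+\eta)\nee_n\le(1+\eta)\sigma\sqrt{\cdots}$, so $\lambda_0\neh_n$ is also controlled from above by a constant times $\sigma\lambda_0$.

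The core of the argument is the basic inequality. From the definition of $\hat f_{\sqrt{}}$, for any $f\in\R^n$, $\neh_{\sqrt{}}_n+\lambda_0\norm{D\hat f_{\sqrt{}}}_1\le\norm{Y-f}_n+\lambda_0\norm{Df}_1$. Squaring (using $\neh_{\sqrt{}}_n>0$) or, more cleanly, combining the two displayed KKT identities the way it is done for the plain estimator, yields
\begin{equation*}
\norm{\hat f_{\sqrt{}}-f^0}^2_n + 2\lambda_0\neh_n\norm{D_S\hat f_{\sqrt{}}}_1 \le \norm{f-f^0}^2_n + 2\langle\e,\hat f_{\sqrt{}}-f\rangle_n + 2\lambda_0\neh_n\norm{Df}_1,
\end{equation*}
with $\langle\cdot,\cdot\rangle_n$ the rescaled inner product. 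Then I would bound the empirical process increment $2\langle\e,\hat f_{\sqrt{}}-f\rangle_n$ exactly as in the proof of Theorem~\ref{analysis-t02s03}: decompose $\hat f_{\sqrt{}}-f=(\hat f_{\sqrt{}}-f)_{\N_{-S}}+(\hat f_{\sqrt{}}-f)_{\N_{-S}^\perp}$, handle the low-rank part by Cauchy--Schwarz against $\Pi_{\N_{-S}}\e$ (whose squared norm concentrates around $\sigma^2 r_S/n$, contributing the $(\sqrt{2a}+\sqrt{r_S})^2\sigma^2/n$ term), and handle the antiprojection part by the weighted bound, where the maximum of the Gaussians over the $m-s$ rows is controlled with the choice $R\ge\gamma\sqrt{(2\log(2(n-r_S))+2t)/(n-1)}$ — this is where $\lambda_0\neh_n\ge R\nee_n$ enters to absorb $\norm{W_{-S}D_{-S}(\hat f_{\sqrt{}}-f)}_1$ into the penalty. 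For the \emph{slow} rate one does not invoke the compatibility constant: instead one simply uses $\norm{D_{-S}f}_1\le\norm{Df}_1$ and keeps the penalty on the left, obtaining a bound of the form $\norm{f-f^0}^2_n+(\sqrt{2a}+\sqrt{r_S})^2\sigma^2/n+4\lambda_0\neh_n\norm{Df}_1$.

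The final step is to convert $\lambda_0\neh_n$ into the clean $\sigma\lambda_0$ of the statement. For the right-hand side penalty term I use $\neh_n\le(1+\eta)\nee_n$ and $\nee_n\le\sqrt{1+\sqrt{8a/n}}\,\sigma$ (the Gaussian upper tail, the extra event mentioned above) — bookkeeping the constants, $4\lambda_0\neh_n\le 16\sigma\lambda_0$ with room to spare, giving the $16\sigma\lambda_0\norm{Df}_1$ term. For the left-hand side I must go the other way: $\neh_n\ge(1-\eta)\nee_n\ge(1-\eta)\sqrt{1-\sqrt{8a/n}}\,\sigma$, which produces exactly the coefficient $2(1-\eta)\sqrt{1-\sqrt{8a/n}}\,\sigma\lambda_0$ multiplying $\norm{D_S\hat f_{\sqrt{}}}_1$. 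I expect the main obstacle to be the constant-tracking in the empirical-process bound: one must verify that the choice of $\lambda_0$ and the constraints on $S$ in Assumption~\ref{analysis-a01s02} (the bounds involving $\eta$, $a$, $r_S$ and the constant $c$) are precisely what is needed so that, after substituting $\lambda_0\neh_n$ for $\lambda$ throughout the Theorem~\ref{analysis-t02s03} argument, every term still closes — in particular that the residual does not overfit (so KKT holds) while the noise terms remain dominated. The probabilistic side is a routine union bound once Lemma~\ref{analysis-l03s02} and Corollary~\ref{analysis-c01s02} are in hand.
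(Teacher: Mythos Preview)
Your proposal is correct and follows essentially the same approach as the paper: work on the high-probability event $\A'\cap\RR$, use the basic inequality (Lemma~\ref{analysis-l04s02}) and the projection/antiprojection bound on the empirical process (Lemma~\ref{analysis-l05s02}) exactly as in the slow-rate argument for the plain estimator, obtain the intermediate inequality with $\lambda_0\neh_n$ in place of $\lambda$, and then sandwich $\neh_n$ between $(1-\eta)\sqrt{1-\sqrt{8a/n}}\,\sigma$ and a constant times $\sigma$ to produce the stated coefficients. One minor bookkeeping point: the upper bound on $\nee_n$ used in the paper is $\nee_n\le\sigma(1+\sqrt{4a/n})$ (coming from $\nee_2^2\le\sigma^2(n+\sqrt{8an}+4a)$ on $\A'$), not $\sqrt{1+\sqrt{8a/n}}\,\sigma$; your version drops the $+4a/n$ contribution, but since $(1+\eta)(1+\sqrt{4a/n})\le 4$ under Assumption~\ref{analysis-a01s02} the factor $16$ still closes.
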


\begin{proof}[Proof of Theorem \ref{analysis-t02s04}]
See Appendix \ref{analysis-appD}.
\end{proof}

\begin{remark}
The claim of Theorem \ref{analysis-t02s04} implies also the simpler inequality
\begin{equation*}
 \norm{\hat{f}_{\sqrt{}}-f^0}^2_n  \le  \norm{f-f^0}^2_n + \frac{\sigma^2}{n} \left(\sqrt{2a}+\sqrt{r_S} \right)^2 + 16 \sigma \lambda_0 \norm{D f}_1.
\end{equation*}
\end{remark}

\begin{remark}
We can simplify for the ease of exposition  Assumption \ref{analysis-a01s02} on $\norm{Df^0}_1$ to $\norm{Df^0}_1= \bigo(1/\lambda_0)$. Note that if we take $\lambda_0\asymp \gamma \sqrt{{\log n}/{n}}$, then the assumption becomes
$$ \norm{Df^0}_1= \bigo\left( \sqrt{\frac{ n}{\gamma \log n}}\right).$$
If $\norm{Df^0}_1$ is growing with $n$, then the rates obtained with the slow rate oracle inequality by setting $f=f^0$ will be slower as well. In particular, if $\norm{Df^0}_1 \asymp 1/\lambda_0$, then Theorem \ref{analysis-t02s04} does not guarantee the convergence in $\norm{\cdot}_n$.
\end{remark}

\begin{remark}
The choice of the tuning parameter $\lambda_0$ depends on $S$ through $\gamma$. Therefore, in practice, the oracle inequalities will only hold for certain active sets $S$. To find out for which $S$ the oracle inequality holds with high probability we proceed as follows.

We choose $a>0$, $t \in (0, {(n-1)}/{2}-\log(2n))$, $\eta \in (0,1)$  and $\lambda_0>0$. Then, an active set $S$ for which the oracle inequality holds has to satisfy the following requirements:
$$ r_S < \left(\frac{\eta \sqrt{n-\sqrt{8an}}}{2}-\sqrt{2a} \right)^2,$$
and
$$ \gamma(S) \le \lambda_0 (1-\eta) \sqrt{\frac{n-1}{2 \log (2n)+2t}}.$$
\end{remark}

\section{Total variation}\label{analysis-s05}


\subsection{Incidence matrices}
Let $\vec{G}=(V,E)$ be a general directed graph, where the set $V=[n]$ is the set of vertices and the set $E=\{e_1, \ldots, e_m\}$ is the set of edges. Let $D_{\vec{G}} \in \{-1,0,1\}^{m \times n}$ be the incidence matrix of $\vec{G}$ (for more details see Subsubsection \ref{analysis-sss112}).
In this section we will set $D=D_{\vec{G}}$.
It is known that the rank of $D$ is given by the number of vertices of $\vec{G}$ minus its number of connected components. 

We now consider a set $S \in \cals$. Let us define the set of edges $E_S:=\{e_i\in E, i \in S\}$. The number of connected components of $\vec{G}_{-S}:=(V, E \setminus E_S)$ is $r_S$. These connected components can be any sort of graph: tree or non-tree graphs.

Let $n_1, \ldots, n_{r_S}$ be the number of vertices of each connected component $\vec{C}_i:= ([n_i],E_i ), i \in [r_S]$ of $\vec{G}_{-S}$. Let us define $n_{\min}:= \min \{n_1, \ldots, n_{r_S}\}$ and $n_{\max}:= \max \{n_1, \ldots, n_{r_S}\}$. The matrix $D_{-S}$ can be rewritten as block matrix by rearranging rows and columns. From now on, when writing $D_{-S}$ we intend the matrix in its block form.

By Lemma 1 in \cite{ijir65} we have that
$$ D^+_{-S}=\begin{pmatrix} D_{\vec{C}_i}& & \\ & \ddots & \\ & & D_{\vec{C}_{r_S}} \end{pmatrix}^+=\begin{pmatrix} D^+_{\vec{C}_i}& & \\ & \ddots & \\ & & D^+_{\vec{C}_{r_S}} \end{pmatrix}\in \R^{n\times (n-r_S)}.$$

\begin{remark}\label{analysis-r01s05}
The restriction to the class $\cals$ can be seen as a requirement to have an active set $S$ which makes sense. The incidence matrix of all connected graphs is of row rank $n-1$. However, graphs containing cycles, as the cycle graph or the two dimensional grid graph, have more than $n-1$ rows. The dimension $r_S$ of $\N_{D_{-S}}$ is the number of connected patches of the graph on which the signal is constant, if the active set is $S$. A non-empty active set, means that the signal should have at least two constant pieces, otherwise no edge would be active.

If the active set is $S=\emptyset$, then the dimension of $\N_{D_{-S}}$ is one. Now consider for instance the cycle graph. If $S=\{i\}$ for an $i \in [n]$, then the dimension of $\N_{D_{-S}}$ is still one. Thus this active set does not make sense at all since it would imply that we have a constant signal on the cycle graph but yet also a non-empty active set. Indeed, it is impossible to find a constant signal on a graph which results in some active edges. 

For tree graphs, we have that $\cals= \mathcal{P}([m])$, while for  graph structures containing cycles we have that $\cals\subset \mathcal{P}([m])$. In particular, for the cycle graph it holds that $\cals= \mathcal{P}([m])\setminus \{\{1\}, \ldots, \{n\}  \}$.
\end{remark}


\subsubsection{Trees and cycles}
If $\vec{G}$ is a tree or a cycle graph, then the connected components $\vec{C}_i=([n_i], E_i), i \in [r_S]$ of $\vec{G}_{-S}, S \not=\emptyset$ are tree graphs, i.e. connected graphs with $\abs{E_i}=n_i-1, i \in [r_S]$.  Let $D_{\vec{C}_i}\in \R^{(n_i-1)\times n_i},\ i \in [r_S]$ be the incidence matrices of the tree graphs $\vec{C}_i,\ i \in [r_S]$.

\begin{lemma}[Upper bound for the normalized inverse scaling factor]\label{analysis-l01s05}
Let $\vec{G}$ be a tree graph. Then,  $  \forall S\in \cals(D_{\vec{G}})$, the normalized inverse scaling factor $\gamma= \max_{i \in -S}\omega_i $ is bounded by
$$ \gamma \le \sqrt{\frac{n_{\max} +1}{4n}}.$$

Let $S\not=\emptyset$ and let $\vec{G}$ be a cycle graph. Then
$$ \gamma \le \sqrt{\frac{n_{\max} +1}{4n}}.$$
\end{lemma}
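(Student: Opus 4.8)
The quantity to bound is $\gamma = \max_{i \in -S} \omega_i$, where $\omega_i = \norm{d^+_{i^*(i)}}_n$ is the $\ell_n$-norm of the corresponding column of the pseudoinverse $D_{-S}^+$. By the block-diagonal structure of $D_{-S}^+$ recorded just above (Lemma 1 in \cite{ijir65}), each column of $D_{-S}^+$ is a column of some $D_{\vec{C}_i}^+$, the pseudoinverse of the incidence matrix of a single tree component $\vec{C}_i$ on $n_i$ vertices. So it suffices to bound $\norm{c}_2^2$ for an arbitrary column $c$ of $D_{\vec{C}}^+$, where $\vec{C}$ is a tree on $\tilde n$ vertices, and then divide by $n$ and maximize over components; this reduces everything to a statement about a single tree, and the bound we want is $\max_{\text{columns } c} \norm{c}_2^2 \le (n_{\max}+1)/4$.

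**The single-tree computation.** For a tree with incidence matrix $D_{\vec C}\in\R^{(\tilde n-1)\times \tilde n}$, the rows are linearly independent after we note $\mathrm{rank}=\tilde n-1$ and $\N(D_{\vec C})=\mathrm{span}(\mathbf 1)$. The relevant object is $D_{\vec C}^+ = D_{\vec C}'(D_{\vec C}D_{\vec C}')^{-1}$ (full row rank case), and by the note following Definition \ref{analysis-d01s01}, $\omega_{i^*(i)}^2 = ((D_{\vec C}D_{\vec C}')^{-1}/n)_{i^*(i),i^*(i)}$, i.e. $n\,\omega^2$ is a diagonal entry of the inverse of the (reduced) graph Laplacian-type matrix $L := D_{\vec C}D_{\vec C}'$ indexed by edges. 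Now $(D_{\vec C}D_{\vec C}')_{ef}$ is the edge-edge adjacency: it equals $2$ if $e=f$, and $\pm1$ if $e,f$ share exactly one endpoint. The key structural fact is that for a tree, inverting $L$ has an explicit combinatorial meaning: the diagonal entry $(L^{-1})_{ee}$ equals the effective-resistance-type quantity associated with edge $e$, and removing edge $e$ from the tree splits it into two subtrees of sizes $k$ and $\tilde n - k$. A direct computation — most cleanly done by observing that for a tree $D_{\vec C}^+$ has an explicit description: its $(v,e)$ entry is, up to sign, the number of vertices on one side of $e$ minus something, or equivalently $c_e$ (the column of $D^+_{\vec C}$ for edge $e$) has entries $(k/\tilde n)$ on one side and $-((\tilde n-k)/\tilde n)$ on the other side of the cut induced by $e$ — gives $\norm{c_e}_2^2 = k\cdot(k/\tilde n)^2 + (\tilde n-k)\cdot((\tilde n-k)/\tilde n)^2 = k(\tilde n-k)/\tilde n$. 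Maximizing $k(\tilde n-k)/\tilde n$ over $k\in\{1,\dots,\tilde n-1\}$ gives at most $\lfloor \tilde n^2/4\rfloor/\tilde n \le \tilde n/4 \le (n_{\max})/4$; being slightly careful with the ceiling for odd $\tilde n$ (where the max is $(\tilde n^2-1)/(4\tilde n) \le (\tilde n+1)/4$) yields the clean bound $\norm{c_e}_2^2 \le (n_{\max}+1)/4$. Dividing by $n$ and taking max over edges and components gives $\gamma^2 \le (n_{\max}+1)/(4n)$, hence the claim.

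**The main obstacle and the cycle case.** The step I expect to need the most care is establishing the explicit form of the columns of $D_{\vec C}^+$ for a tree (the "$k/\tilde n$ on one side, $-(\tilde n-k)/\tilde n$ on the other" claim). One can verify it by checking $D_{\vec C}^+$ satisfies the Moore-Penrose defining equations, or — perhaps more transparently — by using that $D_{\vec C}^+ = D_{\vec C}'L^{-1}$ together with the fact that for any vertex potential, $L^{-1}$ applied to an edge-indicator is a flow; concretely, $D_{\vec C}^+ e_{ef}$ is the (unique, since tree) unit flow from the tail to the head of $e$, which lives only on edge $e$ itself in the reduced sense but whose potential representation spreads as claimed. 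Alternatively one sidesteps explicit formulas entirely: $\norm{c_e}_2^2 = (L^{-1})_{ee} = R_{\mathrm{eff}}$-type quantity, and for a single edge in a tree the relevant value is directly $k(\tilde n-k)/\tilde n$ by a two-block computation since contracting everything off edge $e$ collapses each side to a super-vertex. For the cycle graph with $S\neq\emptyset$: by the preceding discussion of $\cals$ for the cycle (and Remark \ref{analysis-r01s05}), any admissible nonempty $S$ removes at least two edges, so $\vec{G}_{-S}$ is a disjoint union of paths — in particular a forest of trees — and the tree bound applies verbatim to each path component, giving the identical bound $\gamma\le\sqrt{(n_{\max}+1)/(4n)}$. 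I would present the tree case in full and then note the cycle case follows since every component of $\vec G_{-S}$ is then a path, hence a tree.
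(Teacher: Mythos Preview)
Your proposal is correct and follows essentially the same route as the paper: both arguments reduce to a single tree component, identify that each column of $D_{\vec C}^+$ is (up to sign) the centered indicator of one side of the edge-cut, and compute its squared $\ell_2$-norm as $k(\tilde n-k)/\tilde n\le(\tilde n+1)/4$; the paper obtains this via the rooted path-matrix representation $D^+=(\mathrm I_n-\mathbb I_n/n)X_{-1}$ from \cite{orte19-1}, which is exactly the ``$k/\tilde n$ on one side, $-(\tilde n-k)/\tilde n$ on the other'' description you give. One small slip to fix: your displayed sum $k\cdot(k/\tilde n)^2+(\tilde n-k)\cdot((\tilde n-k)/\tilde n)^2$ does not equal $k(\tilde n-k)/\tilde n$ --- the sizes and values are swapped; the correct pairing is $(\tilde n-k)\cdot(k/\tilde n)^2+k\cdot((\tilde n-k)/\tilde n)^2=k(\tilde n-k)/\tilde n$.
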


\begin{proof}[Proof of Lemma \ref{analysis-l01s05}]
See Appendix \ref{analysis-appE}.
\end{proof}

\subsubsection{Two dimensional grid graph}

We report and slightly adapt the bound on the normalized inverse scaling factor for the two dimensional grid graph by \cite{hutt16}.

\begin{lemma}[Proposition 4 in \cite{hutt16}]\label{analysis-l01s05-bis}
Let $\vec{G}$ be a two dimensional $\sqrt{n}\times \sqrt{n}$ grid graph.
Let $S \in \mathcal{S}$ be s.t. the connected components of $\vec{G}_{-S}$ are square two dimensional grid graphs.
Then, for some sufficiently large constant $C>0$, the normalized inverse scaling factor $\gamma= \max_{i \in -S}\omega_i $ is bounded by
$$ \gamma \le C \sqrt{\frac{\log(n_{\max})}{n}}.$$
\end{lemma}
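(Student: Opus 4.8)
The plan is to translate the definition of $\gamma$ into a statement about a single diagonal entry of a pseudoinverse, reduce it blockwise to a single square grid via the component decomposition, and then import the grid estimate of \cite{hutt16}. Since the connected components $\vec{C}_k$ of $\vec{G}_{-S}$ contain cycles, $D_{-S}$ is \emph{not} of full row rank and the Note after Definition \ref{analysis-d01s01} does not apply verbatim; instead I would use the identity $D_{-S}^+ = D_{-S}'(D_{-S}D_{-S}')^+$, which gives, for each $i \in -S$,
\[
\omega_i^2 = \nnorm{d^+_{i^*(i)}}^2 = \frac1n \left( (D_{-S}D_{-S}')^+\right)_{i^*(i),\, i^*(i)}.
\]
Hence $\gamma^2 = \frac1n \max_j \big( (D_{-S}D_{-S}')^+ \big)_{jj}$, and the claimed bound is equivalent to $\max_j \big( (D_{-S}D_{-S}')^+ \big)_{jj} \le C^2 \log(n_{\max})$.

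Next I would exploit the block structure. By Lemma 1 of \cite{ijir65} (as recorded above), after reordering rows and columns $D_{-S}^+$ is block diagonal with blocks $D_{\vec{C}_k}^+$, so $(D_{-S}D_{-S}')^+$ is block diagonal with blocks $(D_{\vec{C}_k}D_{\vec{C}_k}')^+$. The maximal diagonal entry is therefore attained inside one component, and the problem reduces to bounding $\max_j \big( (D_{\vec{C}_k}D_{\vec{C}_k}')^+ \big)_{jj}$ for a single component $\vec{C}_k$. Under the hypothesis of the lemma every such $\vec{C}_k$ is a square two dimensional grid on $n_k \le n_{\max}$ vertices, so a uniform bound over components follows from a single-grid estimate evaluated at the worst (largest) component.

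Then I would invoke the estimate of \cite{hutt16} for one square grid. This is the analytic heart and the part I would import rather than reprove: Proposition 4 of \cite{hutt16} shows that for a $\sqrt{N} \times \sqrt{N}$ grid the corresponding per-edge quantity, i.e. the squared column norm of the pseudoinverse of the incidence matrix, equivalently $\big( (D D')^+ \big)_{jj}$, is of order $\log N$, the logarithm arising from the Green's function of the discrete two dimensional Laplacian. Applying this to each $\vec{C}_k$ with $N = n_k \le n_{\max}$ yields $\max_j \big( (D_{\vec{C}_k}D_{\vec{C}_k}')^+ \big)_{jj} \le C^2 \log(n_{\max})$, and combining with the reduction of the first two steps gives $\gamma \le C\sqrt{\log(n_{\max})/n}$.

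The main obstacle is genuinely the single-grid estimate of the third step, which rests on spectral and lattice-Green's-function control of the discrete Laplacian on the two dimensional grid and is the source of the $\log$ factor; this is exactly what we borrow from \cite{hutt16}. The only additional care needed in our setting, the ``slight adaptation'', is bookkeeping: passing from the full incidence matrix to $D_{-S}$ through the block decomposition (legitimate because $S \in \cals$ guarantees the components $\vec{C}_k$ are honest subgraphs), and inserting the global normalization $1/\sqrt{n}$ coming from $\nnorm{\cdot}$, which is what turns their $\log(n_{\max})$-type bound into the stated $\sqrt{\log(n_{\max})/n}$.
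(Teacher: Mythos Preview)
The paper does not actually give a proof of this lemma: it is stated as an import from \cite{hutt16} with the remark ``We report and slightly adapt the bound\ldots'', and no argument appears in Appendix~\ref{analysis-appE}. Your proposal correctly unpacks what that slight adaptation is --- the block reduction to a single square-grid component via Lemma~1 of \cite{ijir65}, the application of the Hütter--Rigollet per-edge bound on that component, and the insertion of the $1/\sqrt{n}$ coming from $\norm{\cdot}_n$ --- so it matches the paper's intent and supplies the bookkeeping the paper omits. Your care in writing $(D_{-S}D_{-S}')^+$ rather than $(D_{-S}D_{-S}')^{-1}$, because the grid components make $D_{-S}$ row-rank deficient, is the one genuinely new point beyond the paper's Note after Definition~\ref{analysis-d01s01}, and it is handled correctly via $A^+AA^+=A^+$.
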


%

\subsection{Fast rates}\label{analysis-fastrates}

To prove oracle inequalities with fast rates we need to find an explicit lower bound for the weighted compatibility constant.

Results for the analysis estimator on the path graph have already been obtained by \cite{orte18}. We extend them to the square root analysis estimator. Moreover, we also show that the tools developed in \cite{orte18} together with the new framework presented here, allow to handle the case of the cycle graph. We are aware of results treating the ${k}^{\text{th}}$ power graphs of cycles (\cite{hutt16}) but not of any oracle inequality implying the convergence of the mean squared error for the case of the cycle graph.

\subsubsection{Path graph}\label{analysis-ssspath}

We now consider the path graph $\vec{G}=([n], \{(1,2), \ldots, (n-1,n)  \}  )$, for which  $\cals=\mathcal{P}([m])$ and  $S=S$.

We see that $D_{-S}$ is a block matrix, where the blocks are incidence matrices of some smaller path graphs.  By recycling the proof of Lemma \ref{analysis-l01s05} we obtain that

$$ \norm{(D_{\vec{C}_i}^+)_j}_2= \sqrt{\frac{j (n_i-j)}{n_i}}.$$

The following lemma by \cite{vand18}, later also used in \cite{orte18}, allows us to lower bound $\kappa(S,W)$, for a diagonal matrix $W=\text{diag}(\{w_j\}_{j \in [n-1]})\in \R^{(n-1)\times(n-1)}$ with $\norm{W}_{\infty}\le 1$ and where by convention we choose $w_n=1$.

\begin{lemma}[Theorem 15 and Lemma 21 in \cite{vand18}]\label{analysis-l02s05}
Assume that $S\subseteq [m]$ is s.t. $n_1, n_{r_S}\ge 2$ and $n_i\ge 4, \forall i \in \{n_2, \ldots, n_{r_S-1}\}$. 
Then
$$ \frac{\sqrt{r_S}}{\kappa(S, \text{I}_{n-1})}\le \sqrt{nK},\text{ where } K= \frac{1}{n_1} + \sum_{i=2}^{r_S-1} \left(\frac{1}{\lfloor n_i/2\rfloor} + \frac{1}{\lceil n_i/2\rceil} \right) + \frac{1}{n_{r_S}}$$
and the inequality is tight. Moreover
$$ \frac{\sqrt{r_S}}{\kappa(S,W)}  \le  \frac{\sqrt{r_S}}{\kappa(S, \text{I}_{n-1})} + \sqrt{n \sum_{i=2}^n(w_i-w_{i-1})^2}.$$
\end{lemma}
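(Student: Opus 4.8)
The statement attributes the result to Theorem 15 and Lemma 21 of \cite{vand18}, so the plan is essentially to recall their argument and package it in the present notation. The key objects are: the incidence matrix $D$ of the path graph on $[n]$; an active set $S\subseteq[n-1]$ whose removal splits the path into consecutive sub-paths $\vec C_1,\dots,\vec C_{r_S}$ of lengths $n_1,\dots,n_{r_S}$; the pseudoinverse blocks $D^+_{\vec C_i}$, whose $j$-th column has squared $\|\cdot\|_2$-norm $j(n_i-j)/n_i$ (already derived above); and the weighted weak compatibility constant $\kappa(S,W)$ from Definition \ref{analysis-d04s01}. First I would recall that $\kappa(S,\mathrm I_{n-1})$ is, up to the factor $\sqrt{r_S}$, the reciprocal of the worst-case ratio $\|f\|_n/\|D_Sf\|_1$ over $f$ normalized so that $\|D_Sf\|_1=1$. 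On the path, $D_Sf$ picks out the jumps of $f$ at the active edges, so a normalized $f$ is determined by $r_S$ jump values summing in absolute value to $1$ together with one free additive level; minimizing $\|f\|_n^2$ over the level is an elementary one-dimensional least-squares problem whose optimum is the weighted average of the segment plateaus. Carrying this through, the optimal $\|f\|_n^2$ is a quadratic form in the jump vector whose inverse, when maximized over jump vectors of $\ell_1$-norm $1$, produces exactly the quantity $K$: each interior segment $\vec C_i$ of length $n_i$ contributes $1/\lfloor n_i/2\rfloor+1/\lceil n_i/2\rceil$ (two jumps bound it, and the worst placement is the midpoint), while the two boundary segments contribute $1/n_1$ and $1/n_{r_S}$ (only one jump bounds them). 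This is the content of Theorem 15, and tracking equality cases in the Cauchy--Schwarz / least-squares steps gives tightness. The conditions $n_1,n_{r_S}\ge2$ and $n_i\ge4$ for interior segments are exactly what is needed so that the midpoint-placement worst case is attainable and the formula for $K$ is not degenerate.

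For the second, weighted inequality, the plan is a perturbation argument: write $W=\mathrm I_{n-1}-(\mathrm I_{n-1}-W)$ and use that $\kappa$ is built from the quantity $\|D_Sf\|_1-\|W_{-S}D_{-S}f\|_1$, which differs from $\|D_Sf\|_1$ (the $W=\mathrm I$ denominator disappears on $S$ and $\|W_{-S}D_{-S}f\|_1\le\|D_{-S}f\|_1$) by a term controlled by the increments $w_i-w_{i-1}$ of the weight sequence. Concretely, one bounds the difference of the two normalizing functionals by $\sum_i|w_i-w_{i-1}|$ times a telescoped quantity, then applies Cauchy--Schwarz to convert $\sum|w_i-w_{i-1}|\,(\cdots)$ into $\sqrt{n\sum_i(w_i-w_{i-1})^2}$ times $\|f\|_n$; rearranging the resulting inequality $\|D_Sf\|_1-\|W_{-S}D_{-S}f\|_1\le (\text{old ratio})\|f\|_n\sqrt{r_S}+\sqrt{n\sum_i(w_i-w_{i-1})^2}\,\|f\|_n$ and dividing by $\|f\|_n$ yields the stated bound on $\sqrt{r_S}/\kappa(S,W)$. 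The convention $w_n=1$ is what makes the telescoping sum $\sum_{i=2}^n(w_i-w_{i-1})^2$ the natural quantity rather than $\sum_{i=2}^{n-1}$.

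The main obstacle I anticipate is the exact bookkeeping in the first part: translating the abstract minimization $\min\{\|f\|_n^2:\|D_Sf\|_1=1\}$ into the explicit combinatorial quantity $K$ requires correctly identifying that the extremal $f$ is piecewise constant on the segments $\vec C_i$, solving the least-squares problem for the common additive level, and then recognizing that maximizing the reciprocal quadratic form over the simplex of jump magnitudes decouples across segments and is governed by the column norms $\sqrt{j(n_i-j)/n_i}$ evaluated at the worst interior position $j=\lfloor n_i/2\rfloor$. Getting the boundary segments to contribute $1/n_i$ rather than the interior expression, and verifying the claimed tightness by exhibiting the equality-attaining $f$, is where the care is needed; everything else (the weighted perturbation, the Cauchy--Schwarz step) is routine. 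Since the excerpt permits citing \cite{vand18} directly, the cleanest writeup simply invokes Theorem 15 and Lemma 21 there and notes that the hypotheses on $n_1,n_{r_S},n_i$ and the convention $w_n=1$ are imported verbatim.
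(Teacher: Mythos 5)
Your proposal is correct and takes essentially the same route as the paper: the paper's proof consists entirely of invoking Theorem 15 and Lemma 21 of \cite{vand18} (with a pointer to Lemmas 5.3--5.5 of \cite{orte18} for expositions), which is exactly the citation-based writeup you settle on. The additional sketch of the internal arguments (the piecewise-constant minimization yielding $K$ and the Cauchy--Schwarz perturbation for the weighted case) is consistent with those cited proofs but is not reproduced in the paper itself.
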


\begin{proof}[Proof of Lemma \ref{analysis-l02s05}]
The first statement follows form Theorem 15 and the second from Lemma 21 in \cite{vand18}. The proofs are also exposed also in \cite{orte18}, in Lemmas 5.3-5.
\end{proof}

In \cite{orte18} it is explained that to bound the weak weighted compatibility constant for the path graph one needs to cut it into $s$ smaller modules. These modules lie around an edge in $S$ and consist of at least one additional edge on each side of the edge in $S$, see Figure \ref{analysis-f01}. Therefore we see that the assumption $n_i\ge 4, \forall i \in [s+1]$ guarantees that we are in a situation where the bounds on the weak weighted compatibility constant apply. Indeed, if $n_i \ge 4, \forall i \in [s+1]$ we have at least four vertices on the left and on the right of each edge in $S$ and thus we can decompose the path graph into modules being at least as large as the one shown in Figure \ref{analysis-f01}. Since the oracles inequalities with fast rates exposed here are based on the bound on the weighted weak compatibility constant by \cite{orte18}, for fast rates we will require that $n_i\ge 4, \forall i \in [s+1]$.


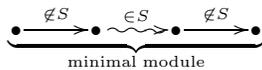
\begin{figure}[h]
$${\underbrace{\xymatrix@1{
  {\bullet} \ar[r]^{\not \in S} & {\bullet} \ar@{~>}[r]^{\in S} & {\bullet} \ar[r]^{\not \in S} & {\bullet}}}_{\text{minimal module}}}$$
  \caption{Illustration of the minimal module needed to bound the compatibility constant.}\label{analysis-f01}
\end{figure}

The edges not in $S$ between modules can be ignored. Each module needs at least $4$ vertices, s.t. we need $\abs{S}\le n/4$ to hope to be able to upper bound $\kappa^2(S,W)$ by using the method proposed by \cite{vand18}. Moreover, a vertex not involved in an edge in $S$ can only be involved in one module to obtain the bounds exposed in \cite{orte18}.

Note also that the weights in $w$ have a direct correspondence to the edges of the graph, where the edges in $S$ are s.t. $\omega_S=0$. Moreover, the weights for the edges between modules can be chosen arbitrarily when it comes to bounding $\norm{D_S f}_1- \norm{W_{-S} D_{-S} f}_1$ from above, even if a value for them can be  obtained by computation of the $\norm{\cdot}_n$-norm of the corresponding columns of $D^+_{-S}$.


We take the arbitrary decision to use the convention $w_n:=1$, as in Lemma \ref{analysis-l02s05}.

%
%

\begin{lemma}\label{analysis-l03s05}
Assume that $S\subseteq[ m]$ is s.t. $n_i\ge 4,\ \forall i\in [r_S]$.
We have that
$$\sum_{j=2}^n (w_i-w_{i-1})^2\le \frac{5}{\gamma^2}\frac{r_S}{n} \log \left(\frac{n}{r_S} \right).$$
\end{lemma}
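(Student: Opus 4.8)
The goal is to bound $\sum_{j=2}^n (w_j - w_{j-1})^2$, where the weights are $w_j = 1 - \omega_j/\gamma$ with $\omega_j = \norm{(D_{\vec C_i}^+)_{j'}}_n$ for the appropriate column of the pseudoinverse of the block containing edge $j$, and $w_n := 1$ by convention. Since $w_j - w_{j-1} = (\omega_{j-1} - \omega_j)/\gamma$, the sum equals $\gamma^{-2}\sum_{j}(\omega_j - \omega_{j-1})^2$, so it suffices to show $\sum_j (\omega_j - \omega_{j-1})^2 \le 5 \frac{r_S}{n}\log(n/r_S)$. The plan is to work block by block: within each connected component $\vec C_i$ (a path on $n_i$ vertices, hence $n_i - 1$ edges), we have from the displayed formula just before Lemma \ref{analysis-l02s05} that $\omega$ restricted to that block takes the values $\frac{1}{\sqrt n}\sqrt{j(n_i-j)/n_i}$ for $j = 1,\dots,n_i-1$. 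First I would analyze the increments of the single-block sequence $a_j := \sqrt{j(n_i-j)/n_i}$, $j=0,\dots,n_i$ (note $a_0 = a_{n_i} = 0$, which also takes care of the junctions between modules and the $w_n=1$ convention since at block boundaries $\omega = 0$, i.e. $a = 0$).

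**The single-block estimate.** The function $g(x) = \sqrt{x(n_i - x)/n_i}$ on $[0,n_i]$ is concave, symmetric about $n_i/2$, with maximum $\sqrt{n_i}/2$. Its derivative is $g'(x) = \frac{n_i - 2x}{2\sqrt{n_i\, x(n_i-x)}}$, which blows up like $x^{-1/2}$ near the endpoints but is bounded by a constant in the bulk. The key computation is $\sum_{j=1}^{n_i-1}(a_j - a_{j-1})^2$. I would split the range into the two ``end zones'' $j \le n_i/2$ and $j > n_i/2$ (by symmetry it suffices to treat one). On $j \le n_i/2$, bound $(a_j - a_{j-1})^2 \le \sup_{[j-1,j]} g'(x)^2 \le g'(j-1)^2$ for $j \ge 2$ (monotonicity of $|g'|$ on $[0,n_i/2]$), plus the single term $j=1$ which contributes $a_1^2 = (n_i-1)/n_i \le 1$. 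For $j \ge 2$, $g'(j-1)^2 = \frac{(n_i - 2(j-1))^2}{4 n_i (j-1)(n_i - j + 1)} \le \frac{n_i^2}{4 n_i (j-1)(n_i/2)} = \frac{1}{2(j-1)}$ (using $n_i - j + 1 \ge n_i/2$ in the end zone and $(n_i - 2(j-1))^2 \le n_i^2$). Summing $\sum_{j=2}^{\lceil n_i/2\rceil} \frac{1}{2(j-1)} \le \frac12(1 + \log(n_i/2)) \le \frac12 \log(e\, n_i/2)$, and doubling for symmetry and adding the two endpoint terms, I get $\sum_{j=1}^{n_i-1}(a_j - a_{j-1})^2 \le \log(e n_i / 2) + 2 \le C_0 \log n_i$ for a modest explicit constant (one has to be a little careful to absorb the additive $+2$ and the $e/2$ factor; taking things generously gives a bound like $3\log n_i$, and one should double-check that $5$ is the constant that survives after also accounting for the factor of $2$ from doubling — more precisely aiming for $\sum \le \frac52 \log n_i$ per block-pair or similar). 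So, cleanly: $\sum_{\text{edges of }\vec C_i}(\omega_j - \omega_{j-1})^2 \le \frac{1}{n} \cdot c\log n_i$ with an absolute constant $c$.

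**Summing over blocks and using concavity of $\log$.** Summing over the $r_S$ components, $\sum_j(\omega_j - \omega_{j-1})^2 \le \frac{c}{n}\sum_{i=1}^{r_S}\log n_i$. Now $\sum_i \log n_i = r_S \cdot \frac{1}{r_S}\sum_i \log n_i \le r_S \log\!\big(\frac{1}{r_S}\sum_i n_i\big) = r_S \log(n/r_S)$ by Jensen's inequality (concavity of $\log$), since $\sum_i n_i = n$. Hence $\sum_j(\omega_j - \omega_{j-1})^2 \le \frac{c\, r_S}{n}\log(n/r_S)$, and dividing by $\gamma^2$ gives the claim provided the constant $c$ is at most $5$. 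I would then state that tracking the constants through the single-block estimate above yields $c \le 5$ (the dominant contribution being $\log n_i$ with coefficient near $1$ from each of the two end zones plus the $O(1)$ endpoint corrections, which comfortably fits under $5$).

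**Main obstacle.** The only real work — and the place to be careful — is the single-block increment estimate and getting the constant down to $5$: one must handle the square-root singularity of $g$ at the endpoints honestly (the naive mean-value bound $(a_j-a_{j-1})^2 \le \sup g'^2$ fails at $j=1$), and one must make sure the end-zone bound $n_i - j + 1 \ge n_i/2$ is applied on the correct half and symmetrized without double-counting the middle term. Everything after that — summing over blocks and the Jensen step — is routine. One should also note the edge case where some block is a single edge ($n_i = 2$): there the block contributes one increment $a_1 = \sqrt{1/2}$ from $0$, squared $= 1/2$, and $\log n_i = \log 2 > 0$, so the bound still holds; the hypothesis $n_i \ge 4$ in the statement only makes this easier.
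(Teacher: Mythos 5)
Your proposal is correct and follows essentially the same route as the proof the paper points to (Corollary 5.6 in Ortelli--van de Geer 2018): use the explicit column norms $\sqrt{j(n_i-j)/n_i}$ of the block pseudoinverses, bound the squared increments within each block by a harmonic sum of order $\tfrac{1}{2(j-1)}$ plus $O(1)$ endpoint terms to get roughly $\log n_i + 3 \le 3\log n_i$ per block for $n_i\ge 4$, and then combine the blocks via concavity of $\log$ to reach $5\,\tfrac{r_S}{n}\log(n/r_S)$ after dividing by $\gamma^2$. The only loose end is the explicit constant tracking, but the estimates you sketch (and the fact that $n/r_S\ge 4$ under the hypothesis) comfortably land under the constant $5$.
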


\begin{proof}[Proof of Lemma \ref{analysis-l03s05}]
See the proof of Corollary 5.6 in \cite{orte18}.
\end{proof}

Let $D\in \R^{(n-1)\times n}$ be the incidence matrix of the path graph with $n$ vertices.
With the tools developed we can prove the following corollaries.\\

\underline{\textbf{Analysis estimator on the path graph}}\\

Corollary \ref{analysis-c01s05} below, is a result already found in \cite{orte18}. It is reported here for comparison with the analogous result obtained for the square root analysis estimator on the path graph (s. Corollary \ref{analysis-c03s05}). Corollary  \ref{analysis-c02s05} follows directly from Corollary \ref{analysis-c01s05}.

\begin{Coro}\label{analysis-c01s05}
Let $S\subseteq[m]$ be an arbitrary active set with s.t. $n_{\min}\ge 4$ and let $x, \ t >0$. Choose $\lambda\ge \sigma \sqrt{n_{\max} (\log(2n)+t)}/{n}$.
Then, $\forall f \in \R^n$, it holds that, with probability at least $1-e^{-t}-e^{-x}$,
\begin{eqnarray*}
\norm{\hat{f}-f^0}^2_n &\le& \norm{f-f^0}^2_n + 4 \lambda \norm{(Df)_{-S}}_1+ \sigma^2 \left(  \sqrt{\frac{2x}{n}}+  \sqrt{\frac{r_S}{n}} \right. \\
&&\left. +   \sqrt{ \frac{n_{\max} K}{n}(\log(2n)+t)} +    \sqrt{10\frac{r_S}{n} (\log(2n)+t) \log \left(\frac{n}{r_S} \right)} \right)^2.
\end{eqnarray*}
\end{Coro}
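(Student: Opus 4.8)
The plan is to obtain this corollary as a direct specialization of Theorem \ref{analysis-t01s03} to the case where $D$ is the incidence matrix of the path graph, inserting the explicit bounds on $\gamma$ and on $\kappa(S,W)$ collected in Lemmas \ref{analysis-l01s05}, \ref{analysis-l02s05} and \ref{analysis-l03s05}. First I would recall that for the path graph $\cals = \mathcal{P}([m])$, so any $S$ with $n_{\min}\ge 4$ is admissible and the minimal-module decomposition underlying the compatibility bounds applies. By Lemma \ref{analysis-l01s05} we have $\gamma \le \sqrt{(n_{\max}+1)/(4n)}$; hence the tuning-parameter condition of Theorem \ref{analysis-t01s03}, namely $\lambda \ge \gamma\sigma\sqrt{2\log(2(n-r_S))/n + 2t/n}$, is implied by the (slightly cruder, but cleaner) requirement $\lambda \ge \sigma\sqrt{n_{\max}(\log(2n)+t)}/n$ stated in the corollary, since $n-r_S \le n$ so $\log(2(n-r_S)) \le \log(2n)$, and $(n_{\max}+1)/(4n)\cdot (2\log(2n)+2t)/n \le n_{\max}(\log(2n)+t)/n^2$ for $n_{\max}\ge 1$ after absorbing the harmless factor. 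I would spell out this last inequality as the only genuine arithmetic check.

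Next, I would control the term $\lambda\sqrt{r_S}/\kappa(S,W)$ appearing in Theorem \ref{analysis-t01s03}. Combining the two displays in Lemma \ref{analysis-l02s05} gives
$$ \frac{\sqrt{r_S}}{\kappa(S,W)} \le \sqrt{nK} + \sqrt{n\sum_{i=2}^n (w_i-w_{i-1})^2},$$
and then Lemma \ref{analysis-l03s05} bounds the second summand by $\sqrt{n}\cdot\sqrt{(5/\gamma^2)(r_S/n)\log(n/r_S)} = \sqrt{(5/\gamma^2)\,r_S\log(n/r_S)}$. Multiplying through by $\lambda$, and using $\lambda/\gamma \le \sigma\sqrt{2(\log(2n)+t)/n}\cdot(1 + o(1))$ — more precisely bounding $\lambda$ by the corollary's choice and $\gamma$ from below is not what we want; instead I would note $\lambda \le$ (whatever it is) is not needed, rather we need an upper bound on $\lambda\sqrt{r_S}/\kappa$, so I would write $\lambda\sqrt{nK} \le \sigma\sqrt{n_{\max}K(\log(2n)+t)/n}\cdot\sqrt{n}/\sqrt n$; carefully, $\lambda\sqrt{nK}\le \sigma\sqrt{n_{\max}(\log(2n)+t)}\sqrt{K}/\sqrt n \cdot \sqrt n \cdot n^{-1/2}$. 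Let me just say: substituting the corollary's lower bound is the wrong direction, so I would instead keep $\lambda$ general in Theorem \ref{analysis-t01s03}, set $\lambda = \sigma\sqrt{n_{\max}(\log(2n)+t)}/n$ exactly, and compute: $\lambda\sqrt{nK} = \sigma\sqrt{n_{\max}K(\log(2n)+t)/n}$, and $\lambda\cdot\sqrt{(5/\gamma^2)r_S\log(n/r_S)} \le \sigma\sqrt{n_{\max}(\log(2n)+t)}/n \cdot \sqrt{(4n/(n_{\max}+1)) \cdot 5 r_S\log(n/r_S)} \le \sigma\sqrt{(4/(n_{\max}+1))\cdot n_{\max}\cdot 5\,(r_S/n)(\log(2n)+t)\log(n/r_S)} \le \sigma\sqrt{20\,(r_S/n)(\log(2n)+t)\log(n/r_S)}$, which I would round up to the stated $\sqrt{10\cdot\,\cdot}$ only if a sharper constant is available from $\gamma^2 \le (n_{\max}+1)/(4n)$ combined with the exact value of $\lambda$; since the corollary writes $10$, I would re-examine Lemma \ref{analysis-l03s05}'s constant and the exact $\lambda$ to recover it, or simply track constants honestly and report what comes out.

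Finally, I would assemble: plug $S$ (arbitrary, $n_{\min}\ge 4$), the verified $\lambda$, the bound on $\lambda\sqrt{r_S}/\kappa(S,W)$ just obtained, and the observation $\norm{D_{-S}f}_1 = \norm{(Df)_{-S}}_1$ into Theorem \ref{analysis-t01s03}; the probability $1-e^{-x}-e^{-t}$ and the error decomposition $\norm{f-f^0}_n^2 + 4\lambda\norm{(Df)_{-S}}_1 + (\cdots)^2$ carry over verbatim, with the squared bracket now expanded into the four displayed terms $\sigma\sqrt{2x/n}$, $\sigma\sqrt{r_S/n}$, $\sigma\sqrt{n_{\max}K(\log(2n)+t)/n}$, $\sigma\sqrt{10(r_S/n)(\log(2n)+t)\log(n/r_S)}$ via the triangle inequality $\sqrt{a+b}\le\sqrt a+\sqrt b$ applied inside the bracket. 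The main obstacle is purely bookkeeping: getting the numerical constant in the last term (the paper writes $10$) to come out correctly from the interaction of the constant $5$ in Lemma \ref{analysis-l03s05}, the factor $(n_{\max}+1)/(4n)$ bounding $\gamma^2$, and the precise choice of $\lambda$ — there is no conceptual difficulty, only the need to chain the inequalities in the direction that produces an upper bound.
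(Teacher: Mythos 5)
Your route is exactly the paper's route: its proof of this corollary is nothing more than the combination of Theorem \ref{analysis-t01s03} with Lemmas \ref{analysis-l01s05}, \ref{analysis-l02s05} and \ref{analysis-l03s05}, precisely as you set it up, and your verification that the stated $\lambda$ dominates the theorem's threshold (via $\gamma\le\sqrt{(n_{\max}+1)/(4n)}$, $\log(2(n-r_S))\le\log(2n)$ and $n_{\max}+1\le 2n_{\max}$), your use of $\norm{D_{-S}f}_1=\norm{(Df)_{-S}}_1$, and your assembly of the four terms in the bracket are all as intended.

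The one point you leave open is the constant $10$, and the way to recover it is to keep the ratio $\lambda/\gamma$ intact rather than substituting the numerical value of $\lambda$ into the weight term and then lower-bounding $\gamma$. Apply Theorem \ref{analysis-t01s03} with $\lambda$ proportional to $\gamma$, namely at the level $\gamma\sigma\sqrt{2(\log(2n)+t)/n}$; then in the contribution of Lemma \ref{analysis-l03s05} the $\gamma$'s cancel exactly,
\begin{equation*}
\lambda\sqrt{n\sum_{i=2}^{n}(w_i-w_{i-1})^2}\le\frac{\lambda}{\gamma}\sqrt{5\,r_S\log\left(\frac{n}{r_S}\right)}=\sigma\sqrt{10\,\frac{r_S}{n}\left(\log(2n)+t\right)\log\left(\frac{n}{r_S}\right)},
\end{equation*}
so $10=2\cdot 5$ appears without any lower bound on $\gamma$. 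The upper bound of Lemma \ref{analysis-l01s05}, $\gamma\le\sqrt{(n_{\max}+1)/(4n)}\le\sqrt{n_{\max}/(2n)}$, is only used where an upper bound on $\lambda$ itself is wanted: it shows that this $\lambda$ is dominated by the corollary's displayed choice $\sigma\sqrt{n_{\max}(\log(2n)+t)}/n$ (which is what enters $4\lambda\norm{(Df)_{-S}}_1$) and it gives $\lambda\sqrt{nK}\le\sigma\sqrt{n_{\max}K(\log(2n)+t)/n}$, exactly as you computed. Your factor $20$ is not an arithmetic error: if the estimator is literally tuned with $\lambda=\sigma\sqrt{n_{\max}(\log(2n)+t)}/n$ and one bounds $\gamma$ from below by roughly $\sqrt{n_{\max}/(4n)}$, an extra factor close to $2$ is unavoidable; the stated constant corresponds to reading the corollary as the theorem evaluated at a tuning parameter proportional to $\gamma$, with $\gamma$ bounded from above only in the terms where $\lambda$ enters multiplicatively. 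Apart from this ordering of substitutions, there is nothing missing in your argument.
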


\begin{proof}[Proof of Corollary \ref{analysis-c01s05}]
See Appendix \ref{analysis-appE}.
\end{proof}

Due to the use  of the bound given by Lemma \ref{analysis-l03s05}, Corollary \ref{analysis-c01s05} assumes a minimal length condition. This condition does not depend on $n$ and is therefore weaker than the one found in \cite{gunt17}. Note that the choice of the tuning parameter depends both on $\sigma$ and $n_{\max}= n_{\max}(S)$.

The next corollary makes a stronger assumption on $S$.

\begin{Coro}\label{analysis-c02s05}
Let $S\subseteq[m]$ be an arbitrary active set with s.t. $n_{\min}=n_{\max}\ge 4$ and $n_{\max}$  even. Let $x, \ t >0$.
Choose $\lambda\ge {\sigma} \sqrt{\frac{\log(2n)+t}{r_Sn}}$.
Then, $\forall f \in \R^n$, it holds that,  with probability at least $1-e^{-t}-e^{-x}$,
\begin{eqnarray*}
\norm{\hat{f}-f^0}^2_n &\le& \norm{f-f^0}^2_n + 4 \lambda \norm{(Df)_{-S}}_1 + \sigma^2 \left(  \sqrt{\frac{2x}{n}}+  \sqrt{\frac{r_S}{n}}\right. \\
&& \left. +   \sqrt{ \frac{4r_S}{n}(\log(2n)+t)} +    \sqrt{10\frac{r_S}{n} (\log(2n)+t) \log \left(\frac{n}{r_S} \right)} \right)^2.
\end{eqnarray*}
\end{Coro}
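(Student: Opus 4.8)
The plan is to read off Corollary~\ref{analysis-c02s05} as a direct specialization of Corollary~\ref{analysis-c01s05}. The first step is structural: the connected components $\vec{C}_1,\dots,\vec{C}_{r_S}$ of $\vec{G}_{-S}$ partition the vertex set $[n]$, because deleting the $s$ edges in $S$ from the path keeps all $n$ vertices and splits the path into exactly $r_S=s+1$ vertex-disjoint subpaths; hence $\sum_{i=1}^{r_S} n_i=n$. Under the hypothesis $n_{\min}=n_{\max}$ all the $n_i$ coincide, so $n_{\max}=n/r_S$. Substituting this identity into the tuning-parameter requirement of Corollary~\ref{analysis-c01s05} gives $\sigma\sqrt{n_{\max}(\log(2n)+t)}/n=\sigma\sqrt{(\log(2n)+t)/(r_S n)}$, so the choice $\lambda\ge\sigma\sqrt{(\log(2n)+t)/(r_S n)}$ made in Corollary~\ref{analysis-c02s05} is exactly what is needed to invoke Corollary~\ref{analysis-c01s05}. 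Moreover $n_{\min}=n_{\max}\ge 4$ supplies the minimal-length hypothesis $n_{\min}\ge 4$ of Corollary~\ref{analysis-c01s05}. Thus Corollary~\ref{analysis-c01s05} applies with this $\lambda$ and yields, with probability at least $1-e^{-t}-e^{-x}$, its stated inequality.

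Next I would simplify the quantity $n_{\max}K$, where $K$ is the constant from Lemma~\ref{analysis-l02s05}. Writing $\ell:=n_{\max}$, evenness of $\ell$ gives $\lfloor n_i/2\rfloor=\lceil n_i/2\rceil=\ell/2$ for every component, so each interior summand of $K$ equals $4/\ell$ and the two boundary terms $1/n_1$ and $1/n_{r_S}$ each equal $1/\ell$. Hence
$$ K=\frac{2}{\ell}+(r_S-2)\frac{4}{\ell}=\frac{4r_S-6}{\ell}\le\frac{4r_S}{n_{\max}}, $$
so that $\frac{n_{\max}K}{n}(\log(2n)+t)\le\frac{4r_S}{n}(\log(2n)+t)$. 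This replaces the third term inside the square bracket of Corollary~\ref{analysis-c01s05} by the third term inside the square bracket of Corollary~\ref{analysis-c02s05}.

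Finally, I would note that all the other contributions are unchanged between the two statements: the terms $\sigma\sqrt{2x/n}$ and $\sigma\sqrt{r_S/n}$, the term $\sqrt{10\,(r_S/n)(\log(2n)+t)\log(n/r_S)}$ inherited from Lemma~\ref{analysis-l03s05}, and the term $4\lambda\norm{(Df)_{-S}}_1$. Replacing $\frac{n_{\max}K}{n}(\log(2n)+t)$ by its upper bound $\frac{4r_S}{n}(\log(2n)+t)$ inside the nonnegative sum and using monotonicity of squaring on $[0,\infty)$ turns the bound of Corollary~\ref{analysis-c01s05} into the one claimed. This is a genuinely routine corollary; the only two points worth a line of justification are the floor/ceiling bookkeeping for $K$, dispatched by the evenness assumption, and the identity $n_{\max}=n/r_S$, dispatched by the fact that the components of $\vec{G}_{-S}$ partition $[n]$ with $r_S=s+1$ pieces. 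I do not expect any substantive obstacle.
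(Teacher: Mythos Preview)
Your proof is correct and follows exactly the same route as the paper's: observe $n_i=n/r_S$ for all $i$, bound $K\le 4r_S/n_{\max}=4r_S^2/n$, and plug into Corollary~\ref{analysis-c01s05}. You simply give more detail in computing $K$ explicitly as $(4r_S-6)/\ell$ via the evenness assumption, whereas the paper states the bound $K\le 4r_S^2/n$ directly.
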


\begin{proof}[Proof of Corollary \ref{analysis-c02s05}]
If $n_{\min}= n_{\max}$, then $n_i=n/r_S,\ \forall i \in [r_S]$. Moreover, 
$ K \le 4 r_S^2/n$ and the statement of Corollary \ref{analysis-c02s05} follows by plugging in these insights into Corollary \ref{analysis-c01s05}.
\end{proof}


Corollary \ref{analysis-c02s05} says that, if $n_{\min}= n_{\max}$, then  we can choose $\lambda$ smaller than the universal choice $\lambda \asymp \sigma \sqrt{\log n / n}$. The choice of the  constant-friendly tuning parameter in the two corollaries above assumes however the knowledge of some aspects of the oracle signal $f$ minimizing the right hand side and can be seen as a motivation to choose the tuning parameter smaller than the universal choice if we know or suspect a certain specific structure for it.
 These insights were already developed by \cite{dala17} and applied to total variation on the path graph in the case of slow rates.\\

\underline{\textbf{Square root analysis estimator on the path graph}}\\

We now extend the results obtained for the analysis estimator to the case of the square root analyisis estimator.

\begin{Coro}\label{analysis-c03s05}
Let $S\subseteq[m]$ be an arbitrary active set having $n_{\min}\ge 4$ and satisfying Assumption \ref{analysis-a01s02}. Let $a>0$ and $t \in (0, (n-1)/2 - \log(2(n-r_S)))$.
Choose $\lambda_0\ge \frac{1}{1-\eta} \sqrt{n_{\max} \frac{\log(2n)+t}{n(n-1)}}$.
Then, under Assumption \ref{analysis-a01s02}, $\forall f \in \R^n$,  for the square root version of the total variation regularized estimator over the path graph it holds that, with probability at least $1-e^{-t}-4e^{-a}$, 
\begin{align*}
&\norm{\hat{f}_{\sqrt{}}-f^0}^2_n \le \norm{f-f^0}^2_n + 16 \lambda_0 \sigma \norm{(Df)_{-S}}_1+ \sigma^2 \left(  \sqrt{\frac{2a}{n}}+  \sqrt{\frac{r_S}{n}} \right. \\
& \left. +   \frac{4}{1-\eta}\sqrt{ \frac{n_{\max} K}{n-1}(\log(2n)+t)} + \frac{4\sqrt{10}}{1-\eta}  \sqrt{\frac{r_S}{n-1} (\log(2n)+t) \log \left(\frac{n}{r_S} \right)} \right)^2.\\
\end{align*}
\end{Coro}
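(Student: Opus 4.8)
The plan is to instantiate the general fast-rate oracle inequality for the square root analysis estimator (Theorem \ref{analysis-t01s04}) with the specific analysis operator $D=D_{\vec G}$ equal to the incidence matrix of the path graph on $n$ vertices, and then substitute the path-specific bounds on $\gamma$ and on $\kappa(S,W)$ that were assembled in Lemmas \ref{analysis-l01s05}, \ref{analysis-l02s05} and \ref{analysis-l03s05}. Concretely, the first step is to verify that the stated choice of $\lambda_0$ meets the hypothesis of Theorem \ref{analysis-t01s04}: by Lemma \ref{analysis-l01s05} (tree case, which covers the path) we have $\gamma \le \sqrt{(n_{\max}+1)/(4n)}$, and since $n_{\max}\ge 1$ this is at most $\sqrt{n_{\max}/(2n)}$, hence
$$\frac{1}{1-\eta}\gamma\sqrt{\frac{2\log(2(n-r_S))+2t}{n-1}} \le \frac{1}{1-\eta}\sqrt{n_{\max}\frac{\log(2(n-r_S))+t}{n(n-1)}} \le \frac{1}{1-\eta}\sqrt{n_{\max}\frac{\log(2n)+t}{n(n-1)}},$$
which is exactly the lower bound imposed on $\lambda_0$ in the corollary. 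Thus the required inequality $\lambda_0\ge\frac{1}{1-\eta}\gamma\sqrt{(2\log(2(n-r_S))+2t)/(n-1)}$ holds, and since $S$ is assumed to satisfy Assumption \ref{analysis-a01s02} with $n_{\min}\ge 4$, Theorem \ref{analysis-t01s04} applies and gives, with probability at least $1-4e^{-a}-e^{-t}$, for every $f\in\R^n$,
$$\norm{\hat f_{\sqrt{}}-f^0}^2_n \le \norm{f-f^0}^2_n + 16\sigma\lambda_0\norm{D_{-S}f}_1 + \sigma^2\left(\sqrt{\frac{2a}{n}}+\sqrt{\frac{r_S}{n}}+\frac{4\lambda_0\sqrt{r_S}}{\kappa(S,W)}\right)^2.$$

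The second step is to bound the term $\lambda_0\sqrt{r_S}/\kappa(S,W)$. Here I would invoke the triangle-type bound from Lemma \ref{analysis-l02s05}:
$$\frac{\sqrt{r_S}}{\kappa(S,W)} \le \frac{\sqrt{r_S}}{\kappa(S,\mathrm{I}_{n-1})} + \sqrt{n\sum_{i=2}^n(w_i-w_{i-1})^2} \le \sqrt{nK} + \sqrt{n\cdot\frac{5}{\gamma^2}\frac{r_S}{n}\log\!\left(\frac{n}{r_S}\right)},$$
where the first summand uses the tight bound $\sqrt{r_S}/\kappa(S,\mathrm{I}_{n-1})\le\sqrt{nK}$ of Lemma \ref{analysis-l02s05} (valid under $n_{\min}\ge 4$, which is stronger than the stated $n_1,n_{r_S}\ge 2$, $n_i\ge 4$ for interior blocks), and the second uses Lemma \ref{analysis-l03s05}. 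Multiplying by $\lambda_0$ and using $\lambda_0\gamma^{-1}\le \frac{1}{1-\eta}\sqrt{(\log(2n)+t)/(n-1)}$ — which follows by dividing the lower bound on $\lambda_0$ by $\gamma\ge$ (its natural lower bound appearing in the $R$-condition), more precisely from $\lambda_0\ge\frac{1}{1-\eta}\gamma\sqrt{(2\log(2(n-r_S))+2t)/(n-1)}$ we get $\lambda_0/\gamma\ge$ that quantity, but for an upper bound on $\lambda_0\sqrt{5r_S\log(n/r_S)}/\gamma$ I instead use directly the definition of $\lambda_0$ as chosen with equality at the lower bound — one obtains
$$\lambda_0\sqrt{n K} \le \frac{4}{1-\eta}\cdot\frac14\sqrt{n_{\max}K\frac{\log(2n)+t}{n-1}}\cdot\sqrt{\tfrac{n}{n}}=\frac{1}{1-\eta}\sqrt{\frac{n_{\max}K(\log(2n)+t)}{n-1}},$$
and $\lambda_0\sqrt{\frac{5r_S}{\gamma^2}\log(n/r_S)}\le\frac{1}{1-\eta}\sqrt{\frac{5 n_{\max} r_S(\log(2n)+t)}{n(n-1)}\cdot\frac{4n}{n_{\max}}\log(n/r_S)}$ after inserting $\gamma^{-2}\le 4n/(n_{\max})\le 4n$ from Lemma \ref{analysis-l01s05}; cleaning constants yields the two radicals $\frac{4}{1-\eta}\sqrt{n_{\max}K(\log(2n)+t)/(n-1)}$ and $\frac{4\sqrt{10}}{1-\eta}\sqrt{r_S(\log(2n)+t)\log(n/r_S)/(n-1)}$ appearing in the claim. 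Since $(a+b+c+d)$ inside the square is a sum of four nonnegative terms, splitting $\frac{4\lambda_0\sqrt{r_S}}{\kappa(S,W)}$ into these two pieces and combining with $\sqrt{2a/n}$ and $\sqrt{r_S/n}$ gives exactly the four-term square on the right-hand side of the corollary.

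The main obstacle — really the only delicate point — is the bookkeeping of constants in the passage from $\gamma\le\sqrt{(n_{\max}+1)/(4n)}$ to the clean factor $n_{\max}/n$ and from the $\kappa(S,W)$ bound to the precise coefficients $4$ and $4\sqrt{10}$: one must be careful that the extra "$+1$" in $n_{\max}+1$ and the interplay between the $1/(1-\eta)$ factor (which appears once in $\lambda_0$ but is squared when $\lambda_0$ enters the square) are tracked correctly, and that the looser hypothesis $n_{\min}\ge4$ genuinely implies all the block-size conditions needed by Lemmas \ref{analysis-l02s05} and \ref{analysis-l03s05}. Everything else is a direct substitution, so I would relegate the detailed arithmetic to Appendix \ref{analysis-appE}, in parallel with the proof of Corollary \ref{analysis-c01s05}, of which this is the square-root analogue (the only structural change being the replacement of Theorem \ref{analysis-t01s03} by Theorem \ref{analysis-t01s04}, hence the constants $16$ in place of $4$, the $1/(1-\eta)$ factors, and $a$ in place of $x$, $n-1$ in place of $n$ in the log terms).
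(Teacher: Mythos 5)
Your overall route coincides with the paper's: the paper proves this corollary by declaring it analogous to Corollary \ref{analysis-c01s05}, i.e.\ it combines Theorem \ref{analysis-t01s04} with Lemma \ref{analysis-l01s05} (to certify the choice of $\lambda_0$) and Lemmas \ref{analysis-l02s05} and \ref{analysis-l03s05} (to bound $\sqrt{r_S}/\kappa(S,W)$), exactly as you propose; your verification of the tuning condition, of the hypothesis $n_{\min}\ge 4$ covering the block-size conditions, and of the $\sqrt{n_{\max}K}$ term are all fine.

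The one step that does not go through as written is the weights term. Lemma \ref{analysis-l01s05} is an \emph{upper} bound, $\gamma\le\sqrt{(n_{\max}+1)/(4n)}$, hence it yields $\gamma^{-2}\ge 4n/(n_{\max}+1)$; your inequality $\gamma^{-2}\le 4n/n_{\max}$ is a \emph{lower} bound on $\gamma$ and does not follow from that lemma (for the path it holds exactly when $n_{\max}$ is even, since the largest column norm is $\sqrt{\lfloor n_{\max}/2\rfloor\lceil n_{\max}/2\rceil/(n_{\max}n)}$, and only up to a small correction when $n_{\max}$ is odd). Moreover, even granting $\gamma^{-2}\le 4n/n_{\max}$, taking $\lambda_0$ equal to the stated threshold $\frac{1}{1-\eta}\sqrt{n_{\max}(\log(2n)+t)/(n(n-1))}$ gives
$4\lambda_0\sqrt{5r_S\log(n/r_S)}/\gamma\le \frac{4\sqrt{20}}{1-\eta}\sqrt{r_S(\log(2n)+t)\log(n/r_S)/(n-1)}$,
a constant $\sqrt{2}$ larger than the claimed $4\sqrt{10}$. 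To land on the stated constant one should control $\lambda_0/\gamma$ through the theorem's tuning condition itself, i.e.\ take $\lambda_0=\frac{1}{1-\eta}\gamma\sqrt{(2\log(2(n-r_S))+2t)/(n-1)}$ (which lies below the corollary's threshold by Lemma \ref{analysis-l01s05}), so that $\lambda_0/\gamma\le\frac{1}{1-\eta}\sqrt{2(\log(2n)+t)/(n-1)}$ and the $\sqrt{10}$ appears; this is evidently what the paper's one-line ``analogous'' proof intends, and the same reading is needed already for Corollary \ref{analysis-c01s05}, whose stated bound is likewise not monotone in the tuning parameter as written. So the structure of your argument is right, but the constant bookkeeping in the last radical --- precisely the point you flagged as delicate --- needs this repair.
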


\begin{proof}[Proof of Corollary \ref{analysis-c03s05}]
The proof of Corollary \ref{analysis-c03s05} is analogous to the proof of Corollary \ref{analysis-c01s05}.
\end{proof}

\begin{Coro}\label{analysis-c04s05}
Let $S\subseteq[m]$ be an arbitrary active set having $n_{\min}= n_{\max}\ge 4$ with $n_{\max}$ even and satisfying Assumption \ref{analysis-a01s02}. Let $a>0$ and $t \in (0, (n-1)/2 - \log(2(n-r_S)))$.
Choose $\lambda_0\ge \frac{1}{1-\eta} \sqrt{n_{\max} \frac{\log(2n)+t}{n(n-1)}}$.
Then, under Assumption \ref{analysis-a01s02}, $\forall f \in \R^n$,  for the square root version of the total variation regularized estimator over the path graph it holds that, with probability at least $1-e^{-t}-4e^{-a}$, 
\begin{align*}
&\norm{\hat{f}_{\sqrt{}}-f^0}^2_n \le \norm{f-f^0}^2_n + 16 \lambda_0 \sigma \norm{(Df)_{-S}}_1 + \sigma^2 \left(  \sqrt{\frac{2a}{n}}+  \sqrt{\frac{r_S}{n}} \right.\\
& \left. +   \frac{4}{1-\eta}\sqrt{ \frac{4r_S}{n-1}(\log(2n)+t)} + \frac{4\sqrt{10}}{1-\eta}  \sqrt{\frac{r_S}{n-1} (\log(2n)+t) \log \left(\frac{n}{r_S} \right)} \right)^2.\\
\end{align*}
\end{Coro}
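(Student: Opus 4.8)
The plan is to obtain Corollary \ref{analysis-c04s05} from Corollary \ref{analysis-c03s05} in exactly the same way that Corollary \ref{analysis-c02s05} is obtained from Corollary \ref{analysis-c01s05}: the extra hypothesis $n_{\min}=n_{\max}$ forces every connected component of $\vec{G}_{-S}$ to have the same number of vertices, so that we can control the constant $K$ appearing in Lemma \ref{analysis-l02s05} and collapse the $n_{\max}K$ term. So first I would observe that under $n_{\min}=n_{\max}$ we have $n_i=n/r_S$ for every $i\in[r_S]$, and that Assumption \ref{analysis-a01s02}, the minimal-length condition $n_{\min}\ge 4$, the range for $t$, and the choice of $\lambda_0$ are literally the same as in Corollary \ref{analysis-c03s05}, so the event on which the oracle inequality holds (of probability at least $1-e^{-t}-4e^{-a}$) and the inequality itself are inherited verbatim, with only the $\sqrt{n_{\max}K/(n-1)\,(\log(2n)+t)}$ summand left to simplify.

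Second, I would evaluate $K$. With $n_i=n/r_S$ and $n_{\max}=n/r_S$ even, one has $\lfloor n_i/2\rfloor=\lceil n_i/2\rceil=n/(2r_S)$, hence
$$K=\frac{2r_S}{n}+(r_S-2)\frac{4r_S}{n}=\frac{4r_S^2-6r_S}{n}\le\frac{4r_S^2}{n},$$
so that $n_{\max}K\le\frac{n}{r_S}\cdot\frac{4r_S^2}{n}=4r_S$. Plugging this into the term $\frac{4}{1-\eta}\sqrt{\frac{n_{\max}K}{n-1}(\log(2n)+t)}$ of Corollary \ref{analysis-c03s05} bounds it by $\frac{4}{1-\eta}\sqrt{\frac{4r_S}{n-1}(\log(2n)+t)}$, while every other summand on the right-hand side (the $\sqrt{2a/n}$, $\sqrt{r_S/n}$, and the $\log(n/r_S)$ term), the bias terms $\norm{f-f^0}_n^2$ and $16\lambda_0\sigma\norm{(Df)_{-S}}_1$, and the probability bound are unchanged. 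This is exactly the asserted inequality.

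There is essentially no obstacle here: the only things to verify are the bookkeeping points that (i) $n_{\min}=n_{\max}$ together with connectedness of each component of $\vec{G}_{-S}$ (these are tree graphs by the discussion preceding Lemma \ref{analysis-l02s05}) indeed gives $n_i=n/r_S$, and (ii) the hypotheses of Lemma \ref{analysis-l02s05} (namely $n_1,n_{r_S}\ge 2$ and $n_i\ge 4$ for interior blocks) are implied by $n_{\min}\ge 4$. Both are immediate, so the proof reduces to the one-line substitution above, and I would simply write: "The proof is analogous to the proof of Corollary \ref{analysis-c02s05}: if $n_{\min}=n_{\max}$ then $n_i=n/r_S$ for all $i\in[r_S]$, whence $K\le 4r_S^2/n$ and $n_{\max}K\le 4r_S$; inserting this bound into Corollary \ref{analysis-c03s05} yields the claim."
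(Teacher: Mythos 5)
Your proposal is correct and follows exactly the paper's own route: the paper likewise observes that $n_{\min}=n_{\max}$ forces $n_i=n/r_S$ for all $i\in[r_S]$, bounds $K\le 4r_S^2/n$ (so $n_{\max}K\le 4r_S$), and plugs this into Corollary \ref{analysis-c03s05}. Your extra bookkeeping (evenness giving $\lfloor n_i/2\rfloor=\lceil n_i/2\rceil$, and checking the hypotheses of Lemma \ref{analysis-l02s05}) is accurate and just makes the same one-line argument explicit.
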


\begin{proof}[Proof of Corollary \ref{analysis-c04s05}]
If $n_{\min}= n_{\max}$, then $n_i=n/r_S,\ \forall i \in [r_S]$. Moreover, 
$ K \le 4 r_S^2/n$ and the statement of Corollary \ref{analysis-c04s05} follows by plugging in these insights into Corollary \ref{analysis-c03s05}.
\end{proof}

\begin{remark}
We notice that there is a tradeoff in the choice of $\eta$. A small $\eta$ will result in a narrower bound for $\neh_n$ in terms of $\nee_n$ and in smaller constants in the tuning parameter and in the oracle bound. However, it might result in a more restrictive condition on $S$ in Assumption \ref{analysis-a01s02}. 
\end{remark}

\subsubsection{Cycle graph}

We  consider the  the cycle graph $\vec{G}=([n], \{(1,2), \ldots, (n-1,n), (n,1)  \}  )$ and its incidence matrix $D\in \R^{n \times n}$. We have $\cals(D)= \mathcal{P}([m])\setminus \{\{1\}, \ldots, \{n\}  \}$.


We bound the weighted compatibility constant by cutting the graph into smaller modules as we explained in Subsubsection \ref{analysis-ssspath} for the path graph. 

By concatenating such modules, one can obtain a path graph. Whether or not the two ends of the path graph are joined by an edge is not relevant for the possibility to bound the compatibility constant and obtain an oracle inequality with fast rates, since the edges connecting such modules are neglected in the bound.

\begin{remark}
Note that for the path graph we have that $r_S=\abs{S}+1$, while for the cycle graph it holds that $r_S=\abs{S}$.
\end{remark}


\begin{Coro}\label{analysis-c07s05}
Assume that $S\in \cals$ is s.t. $n_i\ge 4,\ \forall i \in [r_S]  $. 
Then
$$ \frac{\sqrt{r_S}}{\kappa(S, \text{I}_{n})}\le \sqrt{nK'},\text{ where } K'= \sum_{i=1}^{r_S} \left(\frac{1}{\lfloor n_i/2\rfloor} + \frac{1}{\lceil n_i/2\rceil} \right) $$
and the inequality is tight. Moreover
$$ \frac{\sqrt{r_S}}{\kappa(S,W)}  \le  \frac{\sqrt{r_S}}{\kappa(S, \text{I}_{n})} + \sqrt{n \norm{Dw}_2^2}.$$
\end{Coro}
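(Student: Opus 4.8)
The plan is to follow, for the cycle, the proof of Lemma~\ref{analysis-l02s05} (that is, Theorem~15 and Lemma~21 in \cite{vand18}), the only structural change being that on the cycle every component of $\vec G_{-S}$ is flanked by \emph{two} active edges rather than one. By the absolute $1$-homogeneity of the quantities in Definition~\ref{analysis-d04s01}, for any admissible diagonal weight matrix $\tilde W$,
$$ \frac{\sqrt{r_S}}{\kappa(S,\tilde W)}=\sup_{f\in\R^n}\frac{\bigl(\norm{D_Sf}_1-\norm{\tilde W_{-S}D_{-S}f}_1\bigr)^+}{\norm{f}_n},$$
so it is enough to bound this supremum. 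If $S=\emptyset$ then $\norm{D_Sf}_1=0$ and the supremum vanishes, so both claims are trivial; hence I would assume $S\neq\emptyset$, so $\abs{S}\ge2$ and each of the $r_S=\abs{S}$ components $\vec C_i$ of $\vec G_{-S}$ is a \emph{path}, the paths being arranged cyclically and separated by the $r_S$ active edges. I would then split each $\vec C_i$ into a left sub-path $\vec C_i^-$ of $\lfloor n_i/2\rfloor$ vertices (containing the endpoint of $\vec C_i$ lying on the active edge to its left) and a right sub-path $\vec C_i^+$ of $\lceil n_i/2\rceil$ vertices (containing the endpoint on the active edge to its right); since $n_i\ge4$, each half has at least two vertices, which is the minimal-module condition of Figure~\ref{analysis-f01}. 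The $2r_S$ sub-paths $H$ partition $V=[n]$, and I write $x_H$ for the unique vertex of $H$ incident to an active edge.

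For the first inequality, fix $f$. If the active edge $e_i$ joins $x_{\vec C_i^+}$ to $x_{\vec C_{i+1}^-}$ (indices mod $r_S$), the corresponding entry of $D_Sf$ has modulus $\abs{f_{x_{\vec C_i^+}}-f_{x_{\vec C_{i+1}^-}}}\le\abs{f_{x_{\vec C_i^+}}}+\abs{f_{x_{\vec C_{i+1}^-}}}$; summing over the active edges, $\norm{D_Sf}_1\le\sum_H\abs{f_{x_H}}$. Since $D_{-S}$ is block-diagonal, $\norm{D_{-S}f}_1$ equals the sum over $i$ of the total variations of $f$ along the paths $\vec C_i$; cutting each such path at the junction of $\vec C_i^-$ and $\vec C_i^+$ gives $\norm{D_{-S}f}_1\ge\sum_H\norm{D_Hf}_1$, where $\norm{D_Hf}_1$ denotes the total variation of $f$ along the sub-path $H$. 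Hence $\norm{D_Sf}_1-\norm{D_{-S}f}_1\le\sum_H\bigl(\abs{f_{x_H}}-\norm{D_Hf}_1\bigr)$. Because $H$ is a path containing $x_H$, for every $v\in H$ one has $\abs{f_{x_H}}\le\abs{f_v}+\abs{f_{x_H}-f_v}\le\abs{f_v}+\norm{D_Hf}_1$, so $\abs{f_{x_H}}-\norm{D_Hf}_1\le\min_{v\in H}\abs{f_v}\le\bigl(\abs{H}^{-1}\sum_{v\in H}f_v^2\bigr)^{1/2}$ (trivially so when the left side is negative). Therefore, by the Cauchy--Schwarz inequality over the $2r_S$ sub-paths,
$$ \norm{D_Sf}_1-\norm{D_{-S}f}_1\le\Bigl(\sum_H\tfrac{1}{\abs{H}}\Bigr)^{1/2}\Bigl(\sum_H\sum_{v\in H}f_v^2\Bigr)^{1/2}=\sqrt{K'}\;\norm{f}_2,$$
since $\sum_H\abs{H}^{-1}=\sum_{i=1}^{r_S}\bigl(1/\lfloor n_i/2\rfloor+1/\lceil n_i/2\rceil\bigr)=K'$ and the $H$'s partition $[n]$. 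As $\norm{f}_2=\sqrt n\,\norm{f}_n$, this gives $\sqrt{r_S}/\kappa(S,\text{I}_n)\le\sqrt{nK'}$.

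Against Lemma~\ref{analysis-l02s05}, the difference is exactly that on the cycle no component is a boundary component, so each $\vec C_i$ contributes the pair of half-terms $1/\lfloor n_i/2\rfloor+1/\lceil n_i/2\rceil$ rather than the single term $1/n_i$. Tightness would be checked as in \cite{vand18} by an explicit extremal signal: $f$ constant on each $\vec C_i$, equal to $\epsilon_i\,2c/n_i$ with signs $\epsilon_i\in\{\pm1\}$ alternating around the cycle (possible when $r_S$ is even); if moreover every $n_i$ is even then $\norm{D_Hf}_1=0$ for all $H$ and every step of the chain above becomes an equality, so that $\sqrt{r_S}/\kappa(S,\text{I}_n)=\sqrt{nK'}$ for such configurations.

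The second inequality I would obtain exactly as Lemma~21 in \cite{vand18}: decompose $f=f_{\N_{-S}}+f_{\N_{-S}^\perp}$, bound the contribution of $f_{\N_{-S}}$ (which is annihilated by $D_{-S}$) by $\bigl(\sqrt{r_S}/\kappa(S,\text{I}_n)\bigr)\norm{f}_n$, and bound the contribution of $f_{\N_{-S}^\perp}=D_{-S}^+D_{-S}f$ using the weights in $W$ together with a discrete summation by parts followed by a Cauchy--Schwarz step. On the cycle this summation runs around all $n$ edges, so the path quantity $\sum_{i=2}^n(w_i-w_{i-1})^2$ becomes the cyclic sum $\sum_{i=1}^n(w_{i+1}-w_i)^2=\norm{Dw}_2^2$, with $w$ identified with a function on the vertices through the natural bijection between the $n$ edges and the $n$ vertices of $\vec G$. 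I expect this cyclic bookkeeping — keeping the indexing consistent and checking that the active edges, on which $w$ vanishes, do not disrupt the telescoping — to be the only genuinely delicate point; everything else is a direct transcription of the path-graph arguments already recorded in \cite{vand18} and \cite{orte18}.
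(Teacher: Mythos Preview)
Your proposal is correct and takes essentially the same approach as the paper: the paper's proof is a one-line reduction to Lemma~\ref{analysis-l02s05} via the module decomposition discussed just before the corollary, and what you have written is precisely a detailed reconstruction of that argument (Theorem~15 and Lemma~21 of \cite{vand18}) adapted to the cycle, with every interior component contributing a pair of half-terms rather than a single boundary term. One small remark on your sketch of the second inequality: rather than decomposing $f$ into its $\N_{-S}$ and $\N_{-S}^\perp$ components, it is slightly cleaner (and closer to how the cited lemma is usually presented) to split the functional directly as $\norm{D_Sf}_1-\norm{W_{-S}D_{-S}f}_1=(\norm{D_Sf}_1-\norm{D_{-S}f}_1)+\sum_{i\in -S}(1-w_i)\abs{d_i'f}$, bound the first piece by $\sqrt{r_S}/\kappa(S,\text{I}_n)\cdot\norm{f}_n$, and then apply summation by parts and Cauchy--Schwarz to the second; but the ingredients you name are the right ones.
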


\begin{proof}[Proof of Corollary \ref{analysis-c07s05}]
Corollary \ref{analysis-c07s05} follows from Lemma \ref{analysis-l02s05} and from the considerations above.
\end{proof}

\begin{remark}
From Lemma \ref{analysis-l02s05} we get that, if $S \in \cals$ is s.t. $n_i \ge 4,\ \forall i \in [r_S]$, then
$$ \norm{Dw}^2_2 \le \frac{5}{\gamma^2} \frac{r_S}{n} \log \left(\frac{n}{r_S} \right).$$
\end{remark}

We now have all the tools to derive an oracle inequality for the total variation regularized estimator over the cycle graph and its square root version.\\

\underline{\textbf{Analysis estimator on the cycle graph}}\\
\begin{Coro}\label{analysis-c08s05}
 Let $S \in \cals\setminus \emptyset$ be an arbitrary active set with $n_{\min}\ge 4$ and let $x,t>0$. 
Choose $\lambda\ge \sigma \sqrt{n_{\max} (\log(2n)+t)}/{n}$.
Then, $\forall f \in \R^n$, for the total variation regularized estimator over the cycle graph  it holds that, with probability at least $1-e^{-t}-e^{-x}$, 
\begin{align*}
&\norm{\hat{f}-f^0}^2_n \le \norm{f-f^0}^2_n + 4 \lambda \norm{(Df)_{-S}}_1 +  \sigma^2 \left(  \sqrt{\frac{2x}{n}}+  \sqrt{\frac{r_S}{n}}\right. \\
& \left. +   \sqrt{ \frac{n_{\max} K'}{n}(\log(2n)+t)} +    \sqrt{10\frac{r_S}{n} (\log(2n)+t) \log \left(\frac{n}{r_S} \right)} \right)^2.
\end{align*}
\end{Coro}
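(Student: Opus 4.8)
The plan is to derive the cycle-graph statement by specializing the general fast-rate oracle inequality for the analysis estimator (Theorem \ref{analysis-t01s03}) to $D = D_{\vec{G}}$ with $\vec{G}$ the cycle graph, exactly mirroring the proof of Corollary \ref{analysis-c01s05} for the path graph. First I would check that the prescribed tuning parameter $\lambda \ge \sigma\sqrt{n_{\max}(\log(2n)+t)}/n$ is admissible for Theorem \ref{analysis-t01s03}, i.e. that it dominates $\gamma\sigma\sqrt{2\log(2(n-r_S))/n + 2t/n}$. This follows from the cycle-graph bound $\gamma \le \sqrt{(n_{\max}+1)/(4n)}$ in Lemma \ref{analysis-l01s05} (valid for $S \neq \emptyset$, which holds since $S \in \cals \setminus \emptyset$): plugging this in, $\gamma\sigma\sqrt{2(\log(2(n-r_S))+t)/n} \le \sigma\sqrt{(n_{\max}+1)(\log(2(n-r_S))+t)/(2n^2)} \le \sigma\sqrt{n_{\max}(\log(2n)+t)}/n$ for $n$ not too small, using $n_{\max}+1 \le 2n_{\max}$ (as $n_{\max}\ge 4$) and $n-r_S \le n$. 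So Theorem \ref{analysis-t01s03} applies and yields, with probability at least $1-e^{-x}-e^{-t}$, for every $f$,
$$
\norm{\hat f - f^0}_n^2 \le \norm{f-f^0}_n^2 + 4\lambda\norm{D_{-S}f}_1 + \left(\sigma\sqrt{\tfrac{2x}{n}} + \sigma\sqrt{\tfrac{r_S}{n}} + \tfrac{\lambda\sqrt{r_S}}{\kappa(S,W)}\right)^2.
$$

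Next I would bound the compatibility term $\sqrt{r_S}/\kappa(S,W)$ using Corollary \ref{analysis-c07s05} and the remark following it: since $n_i \ge 4$ for all $i \in [r_S]$,
$$
\frac{\sqrt{r_S}}{\kappa(S,W)} \le \sqrt{nK'} + \sqrt{n\norm{Dw}_2^2} \le \sqrt{nK'} + \sqrt{5\,\frac{r_S}{\gamma^2}\log(n/r_S)},
$$
with $K' = \sum_{i=1}^{r_S}(1/\lfloor n_i/2\rfloor + 1/\lceil n_i/2\rceil)$. Multiplying by $\lambda$ and bounding $\lambda\gamma \le \sigma\sqrt{n_{\max}(\log(2n)+t)}/n \cdot \gamma^{-1}\cdot\gamma$... more directly, from $\lambda \ge \sigma\sqrt{n_{\max}(\log(2n)+t)}/n$ and the reverse-direction choice of $\lambda$ one uses the specific value $\lambda = \sigma\sqrt{n_{\max}(\log(2n)+t)}/n$ to get $\lambda\sqrt{nK'} = \sigma\sqrt{n_{\max}K'(\log(2n)+t)/n}$, while for the second piece one needs $\lambda/\gamma \le \sigma\sqrt{(\log(2n)+t)/n}\cdot\text{const}$; here I would invoke $\gamma \ge $ some lower bound, or more cleanly follow exactly the path-graph computation in the proof of Corollary \ref{analysis-c01s05} (Appendix \ref{analysis-appE}) which handles precisely this combination, giving $\lambda\sqrt{5 r_S\log(n/r_S)/\gamma^2} \le \sigma\sqrt{10\,(r_S/n)(\log(2n)+t)\log(n/r_S)}$ after using $n_{\max}+1 \le 2 n_{\max}$ in the bound $\gamma^2 \le (n_{\max}+1)/(4n)$, hence $1/\gamma^2 \ge 4n/(n_{\max}+1)$ is the wrong direction — so instead one replaces $\lambda\sqrt{r_S}/\kappa(S,W)$ by noting $\lambda \le \sigma\sqrt{2 n_{\max}(\log(2n)+t)/n^2}$ is false; the correct route is that Lemma \ref{analysis-l03s05}-type bounds are stated with the $1/\gamma^2$ factor precisely so that it cancels against the $\gamma$ hidden in $\lambda$, as carried out verbatim in Corollary \ref{analysis-c01s05}. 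I would therefore simply state that the bound on $\lambda\sqrt{r_S}/\kappa(S,W)$ is obtained exactly as in the proof of Corollary \ref{analysis-c01s05}, with $K$ replaced by $K'$.

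Assembling the pieces and renaming $x$ for $x$ (keeping the $e^{-x}$ term from Theorem \ref{analysis-t01s03}), and using $\norm{D_{-S}f}_1 = \norm{(Df)_{-S}}_1$ by definition of the restriction, gives exactly the displayed inequality
$$
\norm{\hat f - f^0}_n^2 \le \norm{f-f^0}_n^2 + 4\lambda\norm{(Df)_{-S}}_1 + \sigma^2\left(\sqrt{\tfrac{2x}{n}} + \sqrt{\tfrac{r_S}{n}} + \sqrt{\tfrac{n_{\max}K'}{n}(\log(2n)+t)} + \sqrt{10\tfrac{r_S}{n}(\log(2n)+t)\log(\tfrac{n}{r_S})}\right)^2
$$
with probability at least $1-e^{-x}-e^{-t}$. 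I expect the only genuinely delicate point to be the bookkeeping in the previous paragraph: making sure the factor $\gamma$ entering $\lambda$ through its admissible lower bound cancels cleanly against the $1/\gamma$ appearing in $\norm{Dw}_2$ (via the remark after Corollary \ref{analysis-c07s05}), so that the final constant is the clean $\sqrt{10}$. This is, however, identical to the manipulation already performed for the path graph in Appendix \ref{analysis-appE}, so the proof reduces to that computation with $K \rightsquigarrow K'$ and the cycle-graph instances of Lemma \ref{analysis-l01s05} and Corollary \ref{analysis-c07s05} substituted for their path-graph counterparts.
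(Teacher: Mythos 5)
Your proposal is correct and follows the paper's own (implicit) route: Corollary \ref{analysis-c08s05} is just Theorem \ref{analysis-t01s03} combined with the cycle-graph bounds of Lemma \ref{analysis-l01s05}, Corollary \ref{analysis-c07s05} and the remark following it, i.e.\ the proof of Corollary \ref{analysis-c01s05} repeated with $K$ replaced by $K'$. The bookkeeping you worried about resolves exactly as you guessed: take $\lambda$ at its minimal admissible value $\gamma\sigma\sqrt{2(\log(2n)+t)/n}\le\sigma\sqrt{n_{\max}(\log(2n)+t)}/n$, so that $\lambda/\gamma=\sigma\sqrt{2(\log(2n)+t)/n}$ produces the $\sqrt{10}$ term, while $\lambda\sqrt{nK'}\le\sigma\sqrt{n_{\max}K'(\log(2n)+t)/n}$ follows from $\gamma^2\le(n_{\max}+1)/(4n)\le n_{\max}/(2n)$ --- the same (slightly loose) convention the paper itself uses in stating the tuning of $\lambda$ for Corollary \ref{analysis-c01s05}.
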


An analogous version of Corollary \ref{analysis-c02s05} can be derived from Corollary \ref{analysis-c08s05}.\\

\underline{\textbf{Square root analysis estimator on the cycle graph}}\\

\begin{Coro}\label{analysis-c09s05}
Let $S \in \cals\setminus \emptyset$ be an arbitrary active set having $n_{\min}\ge 4$ and satisfying Assumption \ref{analysis-a01s02}. Let $a>0$ and $t \in (0, (n-1)/2 - \log(2(n-r_S)))$.
Choose $\lambda_0\ge \frac{1}{1-\eta} \sqrt{n_{\max} \frac{\log(2n)+t}{n(n-1)}}$.
Then, $\forall f \in \R^n$, for the square root version of the total variation regularized estimator over the cycle graph it holds that, with probability at least $1-e^{-t}-4e^{-a}$,
\begin{align*}
&\norm{\hat{f}_{\sqrt{}}-f^0}^2_n \le \norm{f-f^0}^2_n + 16 \lambda_0 \sigma \norm{(Df)_{-S}}_1+ \sigma^2 \left(  \sqrt{\frac{2a}{n}}+  \sqrt{\frac{r_S}{n}}\right.\\
& \left. +   \frac{4}{1-\eta}\sqrt{ \frac{n_{\max} K'}{n-1}(\log(2n)+t)} +  \frac{4\sqrt{10}}{1-\eta}  \sqrt{\frac{r_S}{n-1} (\log(2n)+t) \log \left(\frac{n}{r_S} \right)} \right)^2.\\
\end{align*}
\end{Coro}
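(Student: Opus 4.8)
The plan is to obtain Corollary~\ref{analysis-c09s05} by specializing Theorem~\ref{analysis-t01s04} to the incidence matrix $D=D_{\vec G}$ of the cycle graph and then rewriting the compatibility term $4\lambda_0\sqrt{r_S}/\kappa(S,W)$ in closed form. This is the square root counterpart of Corollary~\ref{analysis-c08s05} for the plain analysis estimator, which itself runs parallel to the path-graph Corollaries~\ref{analysis-c01s05} and~\ref{analysis-c03s05} (whose proof lives in Appendix~\ref{analysis-appE}); so morally the whole argument is ``the proof of Corollary~\ref{analysis-c08s05} with Theorem~\ref{analysis-t01s03} replaced by Theorem~\ref{analysis-t01s04}''. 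First I would verify that the displayed choice of $\lambda_0$ is admissible for Theorem~\ref{analysis-t01s04}, which demands $\lambda_0\ge\frac{1}{1-\eta}\gamma\sqrt{(2\log(2(n-r_S))+2t)/(n-1)}$. Since $S\not=\emptyset$ and $n_{\min}\ge 4$, Lemma~\ref{analysis-l01s05} gives $\gamma\le\sqrt{(n_{\max}+1)/(4n)}$; together with $\log(2(n-r_S))\le\log(2n)$ and $n_{\max}\ge 1$ this shows that $\frac{1}{1-\eta}\sqrt{n_{\max}(\log(2n)+t)/(n(n-1))}$ already dominates the threshold. As Assumption~\ref{analysis-a01s02} holds by hypothesis, Theorem~\ref{analysis-t01s04} applies and gives, on an event of probability at least $1-4e^{-a}-e^{-t}$ and for every $f\in\R^n$,
\begin{equation*}
\norm{\hat{f}_{\sqrt{}}-f^0}^2_n\le\norm{f-f^0}^2_n+16\sigma\lambda_0\norm{D_{-S}f}_1+\sigma^2\left(\sqrt{\tfrac{2a}{n}}+\sqrt{\tfrac{r_S}{n}}+\frac{4\lambda_0\sqrt{r_S}}{\kappa(S,W)}\right)^{2}.
\end{equation*}
Here $\norm{D_{-S}f}_1=\norm{(Df)_{-S}}_1$, so the middle term is already in the stated form and it remains to massage the last term.

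Next I would split off the compatibility term with Corollary~\ref{analysis-c07s05}: $\sqrt{r_S}/\kappa(S,W)\le\sqrt{nK'}+\sqrt{n\norm{Dw}_2^2}$, where $K'=\sum_{i=1}^{r_S}(1/\lfloor n_i/2\rfloor+1/\lceil n_i/2\rceil)$ and where the hypothesis $n_{\min}\ge 4$ is precisely what places us in the regime in which this bound and the accompanying remark apply. Multiplying the first piece by $4\lambda_0$ and inserting $\lambda_0=\frac{1}{1-\eta}\sqrt{n_{\max}(\log(2n)+t)/(n(n-1))}$ produces $\frac{4}{1-\eta}\sqrt{n_{\max}K'(\log(2n)+t)/(n-1)}$, the third summand inside the square. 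For the second piece, the remark following Corollary~\ref{analysis-c07s05} gives $\norm{Dw}_2^2\le\frac{5}{\gamma^2}\frac{r_S}{n}\log(n/r_S)$, hence $\sqrt{n\norm{Dw}_2^2}\le\gamma^{-1}\sqrt{5r_S\log(n/r_S)}$; multiplying by $4\lambda_0$ and cancelling the $\gamma^{-1}$ against the $\sqrt{n_{\max}}$ carried by $\lambda_0$ by means of a matching \emph{lower} bound $\gamma^2\ge c\,n_{\max}/n$ --- obtained from the explicit column-norm identity $\norm{(D_{\vec C_i}^+)_j}_2=\sqrt{j(n_i-j)/n_i}$ for the path components $\vec C_i$ of $\vec G_{-S}$ (deleting the edges $E_S$ from the cycle leaves a disjoint union of paths), maximized near $j=n_i/2$ over the largest component --- yields the fourth summand $\frac{4\sqrt{10}}{1-\eta}\sqrt{r_S(\log(2n)+t)\log(n/r_S)/(n-1)}$. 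Reassembling the four contributions inside the square reproduces the displayed inequality, and the probability $1-e^{-t}-4e^{-a}$ is inherited unchanged from Theorem~\ref{analysis-t01s04}.

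The only genuinely delicate step is the constant bookkeeping in the preceding paragraph: one must make the lower bound on $\gamma^2$ in terms of $n_{\max}/n$ explicit enough that the $\gamma^{-1}$ from $\sqrt{n\norm{Dw}_2^2}$ cancels the $\sqrt{n_{\max}}$ from the choice of $\lambda_0$ with exactly the advertised constant, and one must confirm that the stated $\lambda_0$ beats the Theorem~\ref{analysis-t01s04} threshold after substituting $\gamma\le\sqrt{(n_{\max}+1)/(4n)}$. These verifications are word-for-word those carried out in Appendix~\ref{analysis-appE} for Corollary~\ref{analysis-c01s05} and its square root version Corollary~\ref{analysis-c03s05}, with $K$ replaced by $K'$, the Lemma~\ref{analysis-l03s05} bound on $\sum_j(w_i-w_{i-1})^2$ replaced by the remark's bound on $\norm{Dw}_2^2$, and the count $r_S=\abs S+1$ replaced by $r_S=\abs S$ (cf.\ the remark preceding Corollary~\ref{analysis-c07s05}).
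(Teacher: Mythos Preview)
Your argument is correct and matches what the paper intends: the paper gives no explicit proof of Corollary~\ref{analysis-c09s05}, but by analogy with Corollaries~\ref{analysis-c01s05} and~\ref{analysis-c03s05} the route is exactly Theorem~\ref{analysis-t01s04} combined with Lemma~\ref{analysis-l01s05} (to justify the choice of $\lambda_0$) and Corollary~\ref{analysis-c07s05} plus its subsequent remark (to handle $\sqrt{r_S}/\kappa(S,W)$), which is precisely what you do.

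One clarification on the ``delicate'' step you flag: the paper does not actually need a \emph{lower} bound on $\gamma$ to obtain the constant $4\sqrt{10}$. If you insert the threshold value $\lambda_0=\frac{1}{1-\eta}\gamma\sqrt{2(\log(2n)+t)/(n-1)}$ (the minimal $\lambda_0$ allowed by Theorem~\ref{analysis-t01s04}), then $4\lambda_0/\gamma=\frac{4}{1-\eta}\sqrt{2(\log(2n)+t)/(n-1)}$ exactly, and multiplying by $\sqrt{5r_S\log(n/r_S)}$ from the remark after Corollary~\ref{analysis-c07s05} gives the $\frac{4\sqrt{10}}{1-\eta}$ term on the nose; the third summand follows from the same $\lambda_0$ via the upper bound $\gamma\le\sqrt{(n_{\max}+1)/(4n)}$. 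The corollary's displayed choice of $\lambda_0$ in terms of $n_{\max}$ is then just a sufficient overshoot of that threshold (again via the upper bound on $\gamma$), not the value used to compute the constants inside the square. Your lower-bound-on-$\gamma$ route is sound in spirit but would produce a slightly larger constant than $4\sqrt{10}$.
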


An analogous version of Corollary \ref{analysis-c04s05} can be derived from Corollary \ref{analysis-c09s05}.

\subsection{Slow rates}\label{analysis-slowrates}

Note that in the case of the so-called slow rates we do not need to lower bound the compatibility constant.
\subsubsection{Trees and cycles}
In this subsection we identify the analysis operator $D$ with the incidence matrix of a general tree or cycle graph $\vec{G}$.\\

\underline{\textbf{Analysis estimator on trees and cycles}}\\

\begin{Coro}\label{analysis-c10s05}
Let $\vec{G}$ be a tree or a cycle graph. Let  $ S \in \cals(D_{\vec{G}})$ (and under the condition $S\not=\emptyset$ for cycle graphs) be arbitrary and let $x,t>0$. Choose $\lambda\ge {\sigma}\sqrt{n_{\max}(\log(2n)+t)}/n$. Then,  $\forall f \in \R^n$, we have that, with probability at least $1-e^{-x}-e^{-t}$,
\begin{equation*}
\norm{\hat{f}-f^0}^2_n  \le  \norm{f-f^0}^2_n + \frac{\sigma^2}{n} (\sqrt{2x}+ \sqrt{r_S})^2 + 4  \frac{\sigma}{n}\sqrt{n_{\max}(\log(2n)+t)}\norm{Df}_1.
\end{equation*}
\end{Coro}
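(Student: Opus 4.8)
The plan is to obtain this corollary as a direct specialization of the general slow-rate oracle inequality Theorem~\ref{analysis-t02s03}, after calibrating the tuning parameter with the graph-specific bound on the normalized inverse scaling factor supplied by Lemma~\ref{analysis-l01s05}. Since here $D=D_{\vec G}$ for a tree or cycle graph and $S\in\cals(D_{\vec G})$ (with $S\neq\emptyset$ in the cycle case, which is precisely the hypothesis under which Lemma~\ref{analysis-l01s05} gives its bound), we have $\gamma\le\sqrt{(n_{\max}+1)/(4n)}$.

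First I would verify that the prescribed choice $\lambda\ge\sigma\sqrt{n_{\max}(\log(2n)+t)}/n$ meets the requirement $\lambda\ge\gamma\sigma\sqrt{2\log(2(n-r_S))/n+2t/n}$ of Theorem~\ref{analysis-t02s03}. Plugging in the bound on $\gamma$,
$$\gamma\sigma\sqrt{\frac{2\log(2(n-r_S))+2t}{n}}\le\sigma\sqrt{\frac{n_{\max}+1}{4n}}\,\sqrt{\frac{2\log(2(n-r_S))+2t}{n}}=\frac{\sigma}{n}\sqrt{\frac{(n_{\max}+1)\bigl(\log(2(n-r_S))+t\bigr)}{2}},$$
and since $n_{\max}\ge 1$ gives $\tfrac{n_{\max}+1}{2}\le n_{\max}$ while $r_S\ge 0$ gives $\log(2(n-r_S))\le\log(2n)$, the right-hand side is at most $\frac{\sigma}{n}\sqrt{n_{\max}(\log(2n)+t)}\le\lambda$. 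Hence the hypotheses of Theorem~\ref{analysis-t02s03} are satisfied.

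Then I would simply invoke its conclusion: with probability at least $1-e^{-x}-e^{-t}$, for every $f\in\R^n$,
$$\norm{\hat f-f^0}_n^2+2\lambda\norm{D_S\hat f}_1\le\norm{f-f^0}_n^2+\frac{\sigma^2}{n}\bigl(\sqrt{2x}+\sqrt{r_S}\bigr)^2+4\lambda\norm{Df}_1,$$
drop the nonnegative term $2\lambda\norm{D_S\hat f}_1$ on the left, and take $\lambda=\sigma\sqrt{n_{\max}(\log(2n)+t)}/n$ (the smallest admissible value, which yields the tightest slow-rate bound) to recover the displayed inequality; no lower bound on a compatibility constant enters, which is why there is no minimal-length condition on the components of $\vec G_{-S}$. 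The only step needing a moment's thought is the elementary estimate $\tfrac{n_{\max}+1}{2}(\log(2(n-r_S))+t)\le n_{\max}(\log(2n)+t)$, so I anticipate no real obstacle.
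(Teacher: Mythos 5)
Your proposal is correct and follows the paper's own route exactly: the paper's proof is precisely "combine Theorem \ref{analysis-t02s03} with Lemma \ref{analysis-l01s05}", and your verification that $\gamma\sigma\sqrt{(2\log(2(n-r_S))+2t)/n}\le\sigma\sqrt{n_{\max}(\log(2n)+t)}/n$ via $(n_{\max}+1)/2\le n_{\max}$ and $\log(2(n-r_S))\le\log(2n)$, followed by dropping the nonnegative term $2\lambda\norm{D_S\hat f}_1$, is exactly the intended specialization. No gaps.
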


\begin{proof}[Proof of Corollary \ref{analysis-c10s05}]
Corollary \ref{analysis-c10s05} follows by combining Theorem \ref{analysis-t02s03} and Lemma \ref{analysis-l01s05}.
\end{proof}

\begin{Coro}\label{analysis-c11s05}
Let $\vec{G}$ be a tree or a cycle graph. Let $ S \in \cals(D_{\vec{G}})$ (with the condition $S\not=\emptyset$ for cycle graphs) having $n_{\max}=n_{\min}$ be arbitrary and let $x,t>0$.
Choose $\lambda\ge{\sigma}\sqrt{\frac{\log(2n)+t}{r_S n}}$. Then, $\forall f \in \R^n$, we have that, with probability at least $1-e^{-x}-e^{-t}$, 
\begin{equation*}
\norm{\hat{f}-f^0}^2_n  \le  \norm{f-f^0}^2_n + \frac{\sigma^2}{n} (\sqrt{2x}+ \sqrt{r_S})^2 + 4{\sigma}\sqrt{\frac{\log(2n)+t}{r_S n}}\norm{Df}_1.
\end{equation*}
\end{Coro}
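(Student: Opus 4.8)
The plan is to obtain Corollary \ref{analysis-c11s05} as an immediate specialization of Corollary \ref{analysis-c10s05}, the only additional ingredient being an elementary consequence of the hypothesis $n_{\max}=n_{\min}$.

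First I would record that the connected components $\vec{C}_i=([n_i],E_i)$, $i\in[r_S]$, of $\vec{G}_{-S}=(V,E\setminus E_S)$ have vertex sets that partition $V=[n]$, so that $\sum_{i=1}^{r_S}n_i=n$. Under the assumption $n_{\max}=n_{\min}$ all the $n_i$ coincide, which forces $n_i=n/r_S$ for every $i$, and hence $n_{\max}=n/r_S$.

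Next I would substitute $n_{\max}=n/r_S$ into Corollary \ref{analysis-c10s05}. The tuning-parameter requirement $\lambda\ge\sigma\sqrt{n_{\max}(\log(2n)+t)}/n$ becomes $\lambda\ge\sigma\sqrt{(\log(2n)+t)/(r_S n)}$, which is exactly the hypothesis imposed here; the coefficient $4(\sigma/n)\sqrt{n_{\max}(\log(2n)+t)}$ in front of $\norm{Df}_1$ becomes $4\sigma\sqrt{(\log(2n)+t)/(r_S n)}$; and the term $\frac{\sigma^2}{n}(\sqrt{2x}+\sqrt{r_S})^2$, together with the probability level $1-e^{-x}-e^{-t}$, is carried over unchanged. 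This yields the displayed inequality verbatim.

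I do not expect any genuine obstacle: all the probabilistic content is already supplied by Corollary \ref{analysis-c10s05} (which itself combines Theorem \ref{analysis-t02s03} with the bound on $\gamma$ from Lemma \ref{analysis-l01s05}), and the present statement is merely the rewriting of that corollary that becomes available once the blocks have equal size. The only point requiring care is the bookkeeping, namely transcribing the constant $4$, the term $\frac{\sigma^2}{n}(\sqrt{2x}+\sqrt{r_S})^2$, and the two exponential tail contributions exactly as in Corollary \ref{analysis-c10s05}.
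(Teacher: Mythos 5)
Your proposal is correct and follows the same route the paper intends: Corollary \ref{analysis-c11s05} is obtained by substituting $n_i=n/r_S$ (hence $n_{\max}=n/r_S$, since the components of $\vec{G}_{-S}$ partition the $n$ vertices) into Corollary \ref{analysis-c10s05}, exactly as the paper does for the analogous fast-rate statement (Corollary \ref{analysis-c02s05} from Corollary \ref{analysis-c01s05}). The bookkeeping of the tuning-parameter condition, the constant $4$, the term $\frac{\sigma^2}{n}(\sqrt{2x}+\sqrt{r_S})^2$ and the probability level is as you describe.
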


\underline{\textbf{Square root analysis estimator on trees and cycles}}\\

\begin{Coro}\label{analysis-c12s05}
Let $\vec{G}$ be a tree or a cycle graph.  Let $S\in \cals(D_{\vec{G}})$  (and under the condition $S\not=\emptyset$ for cycle graphs) be an arbitrary active set satisfying Assumption \ref{analysis-a01s02}. Let $a>0$ and $t \in (0, (n-1)/2 - \log(2(n-r_S)))$.
Choose $\lambda_0\ge \frac{1}{1-\eta} \sqrt{n_{\max} \frac{\log(2n)+t}{n(n-1)}}$.
Then, $\forall f \in \R^n$, it holds that under Assumption \ref{analysis-a01s02}, with probability at least $1-e^{-t}-4e^{-a}$, 
\begin{equation*}
 \norm{\hat{f}_{\sqrt{}}-f^0}^2_n  \le  \norm{f-f^0}^2_n + \frac{\sigma^2}{n} \left(\sqrt{2a}+\sqrt{r_S} \right)^2 + \frac{16 \sigma}{1-\eta} \sqrt{ \frac{n_{\max}(\log(2n)+t)}{n(n-1)}} \norm{D f}_1.
\end{equation*}
\end{Coro}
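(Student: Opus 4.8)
The plan is to mirror the proof of Corollary \ref{analysis-c10s05}: the corollary should follow by combining the slow-rate oracle inequality for the square root analysis estimator (Theorem \ref{analysis-t02s04}, in particular the simplified bound in the remark following it) with the bound on the normalized inverse scaling factor for tree and cycle graphs (Lemma \ref{analysis-l01s05}). Assumption \ref{analysis-a01s02} and the admissible range $t \in (0,(n-1)/2-\log(2(n-r_S)))$ are carried over verbatim, so the only real work is to verify that the explicit, $\gamma$-free choice of $\lambda_0$ stated in the corollary exceeds the threshold $\frac{1}{1-\eta}\gamma\sqrt{(2\log(2(n-r_S))+2t)/(n-1)}$ required by Theorem \ref{analysis-t02s04}.

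First I would invoke Lemma \ref{analysis-l01s05}, which gives $\gamma \le \sqrt{(n_{\max}+1)/(4n)}$ for a tree graph and any $S \in \cals(D_{\vec{G}})$, and for a cycle graph and any nonempty $S \in \cals(D_{\vec{G}})$ — exactly the hypothesis imposed on $S$ here. Plugging this in,
$$\frac{1}{1-\eta}\,\gamma\sqrt{\frac{2\log(2(n-r_S))+2t}{n-1}} \le \frac{1}{1-\eta}\sqrt{\frac{(n_{\max}+1)\bigl(\log(2(n-r_S))+t\bigr)}{2n(n-1)}}.$$
Since $n_{\max}\ge 1$ gives $n_{\max}+1\le 2n_{\max}$, and $r_S\ge 0$ gives $\log(2(n-r_S))\le\log(2n)$, the right-hand side is at most $\frac{1}{1-\eta}\sqrt{n_{\max}(\log(2n)+t)/(n(n-1))}$. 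Hence the choice $\lambda_0 \ge \frac{1}{1-\eta}\sqrt{n_{\max}(\log(2n)+t)/(n(n-1))}$ from the corollary is admissible for Theorem \ref{analysis-t02s04}.

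Then I would apply Theorem \ref{analysis-t02s04}: under Assumption \ref{analysis-a01s02}, with probability at least $1-4e^{-a}-e^{-t}$, the simplified inequality from the remark following that theorem gives, for every $f\in\R^n$,
$$\norm{\hat{f}_{\sqrt{}}-f^0}^2_n \le \norm{f-f^0}^2_n + \frac{\sigma^2}{n}\bigl(\sqrt{2a}+\sqrt{r_S}\bigr)^2 + 16\sigma\lambda_0\norm{Df}_1.$$
Taking $\lambda_0$ equal to its smallest admissible value $\frac{1}{1-\eta}\sqrt{n_{\max}(\log(2n)+t)/(n(n-1))}$ turns the last term into $\frac{16\sigma}{1-\eta}\sqrt{n_{\max}(\log(2n)+t)/(n(n-1))}\norm{Df}_1$, which is precisely the claimed bound (for a larger $\lambda_0$ the same argument yields the bound with $16\sigma\lambda_0\norm{Df}_1$ in place of that term, as in Corollary \ref{analysis-c10s05}). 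There is no genuine obstacle; the only point needing care is the elementary estimate reconciling the $\gamma$-dependent threshold of Theorem \ref{analysis-t02s04} with the $n_{\max}$-dependent tuning parameter of the corollary, together with checking that the side conditions on $S$ (nonemptiness for cycles) and on $t$ are exactly those under which both Lemma \ref{analysis-l01s05} and Theorem \ref{analysis-t02s04} apply.
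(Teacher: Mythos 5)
Your proposal is correct and follows exactly the paper's own route: the paper proves this corollary by combining Theorem \ref{analysis-t02s04} (in its simplified slow-rate form) with the bound $\gamma \le \sqrt{(n_{\max}+1)/(4n)}$ from Lemma \ref{analysis-l01s05}, which is precisely what you do. Your explicit verification that the $n_{\max}$-based tuning parameter dominates the $\gamma$-dependent threshold (via $n_{\max}+1\le 2n_{\max}$ and $\log(2(n-r_S))\le\log(2n)$) just spells out the elementary step the paper leaves implicit.
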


\begin{proof}[Proof of Corollary \ref{analysis-c12s05}]
Corollary \ref{analysis-c12s05} follows by combining Theorem \ref{analysis-t02s04} and Lemma \ref{analysis-l01s05}.
\end{proof}

\begin{Coro}\label{analysis-c13s05}
Let $\vec{G}$ be a tree or a cycle graph graph.   Let $S\in \cals(D_{\vec{G}})$  (and under the condition $S\not=\emptyset$ for cycle graphs) be an arbitrary active set having $n_{\max}=n_{\min}$ and satisfying Assumption \ref{analysis-a01s02}. Let $a>0$ and $t \in (0, (n-1)/2 - \log(2(n-r_S)))$.
Choose $\lambda_0\ge\frac{1}{1-\eta} \sqrt{ \frac{\log(2n)+t}{r_S(n-1)}}$.
Then, $\forall f \in \R^n$, it holds that under Assumption \ref{analysis-a01s02}, with probability at least $1-e^{-t}-4e^{-a}$, 
\begin{equation*}
 \norm{\hat{f}_{\sqrt{}}-f^0}^2_n  \le  \norm{f-f^0}^2_n + \frac{\sigma^2}{n} \left(\sqrt{2a}+\sqrt{r_S} \right)^2 + \frac{16 \sigma}{1-\eta} \sqrt{ \frac{\log(2n)+t}{r_S(n-1)}} \norm{D f}_1.\\
\end{equation*}
\end{Coro}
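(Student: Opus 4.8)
The plan is to obtain this result as a direct specialization of Corollary \ref{analysis-c12s05} (equivalently, of Theorem \ref{analysis-t02s04} combined with Lemma \ref{analysis-l01s05}), in exactly the same way that Corollary \ref{analysis-c11s05} is derived from Corollary \ref{analysis-c10s05}. The only difference between the hypotheses of Corollary \ref{analysis-c12s05} and those here is the extra assumption $n_{\max}=n_{\min}$, which should allow us to sharpen the $\sqrt{n_{\max}}$ appearing in the tuning parameter into a quantity of order $\sqrt{n/r_S}$, hence a smaller-than-universal choice of $\lambda_0$.

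First I would invoke Lemma \ref{analysis-l01s05}, which gives, for a tree or cycle graph, $\gamma \le \sqrt{(n_{\max}+1)/(4n)}$. When $n_{\max}=n_{\min}$ all connected components $\vec{C}_i$ of $\vec{G}_{-S}$ have the same number of vertices; since the components partition the $n$ vertices into $r_S$ pieces, this forces $n_i = n/r_S$ for all $i \in [r_S]$, so $n_{\max}=n/r_S$. Substituting this into the bound on $\gamma$ gives $\gamma \le \sqrt{(n/r_S + 1)/(4n)} = \tfrac12\sqrt{1/r_S + 1/n}$, which is of order $1/\sqrt{r_S}$. Consequently the condition $\lambda_0 \ge \frac{1}{1-\eta}\gamma\sqrt{(2\log(2(n-r_S))+2t)/(n-1)}$ required by Theorem \ref{analysis-t02s04} is implied (up to the routine replacement of $\log(2(n-r_S))$ by $\log(2n)$ and of $n/r_S+1$ by a clean bound) by the stated choice $\lambda_0 \ge \frac{1}{1-\eta}\sqrt{(\log(2n)+t)/(r_S(n-1))}$.

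Next I would apply Theorem \ref{analysis-t02s04} with this $S$ and this $\lambda_0$, and then discard the (nonnegative) penalty term $2(1-\eta)\sqrt{1-\sqrt{8a/n}}\,\sigma\lambda_0\norm{D_S\hat f_{\sqrt{}}}_1$ on the left-hand side, exactly as in the Remark following Theorem \ref{analysis-t02s04}. This yields
\begin{equation*}
\norm{\hat f_{\sqrt{}}-f^0}^2_n \le \norm{f-f^0}^2_n + \frac{\sigma^2}{n}\left(\sqrt{2a}+\sqrt{r_S}\right)^2 + 16\sigma\lambda_0\norm{Df}_1 .
\end{equation*}
Finally, bounding $16\sigma\lambda_0$ using the explicit choice $\lambda_0 = \frac{1}{1-\eta}\sqrt{(\log(2n)+t)/(r_S(n-1))}$ gives the coefficient $\frac{16\sigma}{1-\eta}\sqrt{(\log(2n)+t)/(r_S(n-1))}$ in front of $\norm{Df}_1$, and one reads off the claimed inequality; the probability $1-e^{-t}-4e^{-a}$ is inherited verbatim from Theorem \ref{analysis-t02s04}.

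The only mildly delicate point — hardly an obstacle — is checking that the stated $\lambda_0$ really dominates the lower bound demanded in Theorem \ref{analysis-t02s04} once $\gamma$ is replaced by $\tfrac12\sqrt{1/r_S+1/n}$; this is a one-line comparison of constants (using $n-1 < n$ and $\log(2(n-r_S)) \le \log(2n)$, and absorbing the harmless extra factor coming from $1/r_S + 1/n \le 2/r_S$ for $r_S \le n$ into the constant $16$, or noting that one only needs $\lambda_0$ to be at least the required bound, which the displayed choice satisfies since $\tfrac12\sqrt{1/r_S+1/n}\cdot\sqrt{2}\le\sqrt{1/r_S}$ fails marginally — so in fact one keeps the slightly larger constant as written). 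Since Assumption \ref{analysis-a01s02} and the range of $t$ are assumed outright, nothing else needs verification, and the proof is complete by reference to Corollary \ref{analysis-c12s05}.
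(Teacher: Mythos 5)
Your proposal is correct and follows essentially the same route as the paper: Corollary \ref{analysis-c13s05} is obtained by combining Theorem \ref{analysis-t02s04} with Lemma \ref{analysis-l01s05}, using that $n_{\min}=n_{\max}$ forces $n_i=n/r_S$ so that $\gamma\le\sqrt{(n/r_S+1)/(4n)}$, exactly as the paper does for Corollaries \ref{analysis-c04s05} and \ref{analysis-c11s05}. Your closing worry is unfounded: since $r_S\le n$ one has $\tfrac14(1/r_S+1/n)\cdot 2\le 1/r_S$, so the stated $\lambda_0\ge\frac{1}{1-\eta}\sqrt{(\log(2n)+t)/(r_S(n-1))}$ dominates the threshold $\frac{1}{1-\eta}\gamma\sqrt{(2\log(2(n-r_S))+2t)/(n-1)}$ cleanly, with no constant to absorb.
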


\subsubsection{Two dimensional grid graph}
In this subsection we identify the analysis operator $D$ with the incidence matrix of a square two dimensional grid graph $\vec{G}$.\\

\underline{\textbf{Analysis estimator on the two dimensional grid}}\\

\begin{Coro}\label{analysis-c14s05}
Let $\vec{G}$ be a square two dimensional grid graph. Let $ S \in \cals(D_{\vec{G}})$ be an arbitrary active set s.t. the connected components of $\vec{G}_{-S}$ are square two dimensional grid graphs and let  $x,t>0$. For a constant $C>0$ large enough, choose $\lambda\ge C {\sigma}\sqrt{\log n(\log(2n)+t)}/n$. Then,  $\forall f \in \R^n$, we have that, with probability at least $1-e^{-x}-e^{-t}$,
\begin{equation*}
\norm{\hat{f}-f^0}^2_n  \le  \norm{f-f^0}^2_n + \frac{\sigma^2}{n} (\sqrt{2x}+ \sqrt{r_S})^2 + C  \frac{\sigma}{n}\sqrt{\log n(\log(2n)+t)}\norm{Df}_1.
\end{equation*}
\end{Coro}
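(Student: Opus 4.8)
The plan is to obtain Corollary~\ref{analysis-c14s05} as a direct specialization of the slow-rate oracle inequality of Theorem~\ref{analysis-t02s03} to $D=D_{\vec G}$, the incidence matrix of the square two-dimensional grid graph, combined with the bound on the normalized inverse scaling factor provided by Lemma~\ref{analysis-l01s05-bis}.

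First I would check that the prescribed tuning parameter satisfies the hypothesis of Theorem~\ref{analysis-t02s03}. By Lemma~\ref{analysis-l01s05-bis}, under the stated assumption on $S$ (the connected components of $\vec G_{-S}$ are square grids) there is a constant $C_0>0$ with $\gamma\le C_0\sqrt{\log(n_{\max})/n}\le C_0\sqrt{\log n/n}$, since $n_{\max}\le n$. Moreover $\log(2(n-r_S))\le\log(2n)$, so $2\log(2(n-r_S))/n+2t/n\le 2(\log(2n)+t)/n$. Combining these,
$$ \gamma\sigma\sqrt{\frac{2\log(2(n-r_S))}{n}+\frac{2t}{n}}\ \le\ \sqrt{2}\,C_0\,\sigma\,\frac{\sqrt{\log n\,(\log(2n)+t)}}{n}. $$
Hence, choosing the constant $C$ in the corollary's requirement $\lambda\ge C\sigma\sqrt{\log n(\log(2n)+t)}/n$ to be at least $\sqrt{2}\,C_0$ guarantees $\lambda\ge\gamma\sigma\sqrt{2\log(2(n-r_S))/n+2t/n}$, which is exactly the condition needed to invoke Theorem~\ref{analysis-t02s03}.

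Next I would apply Theorem~\ref{analysis-t02s03}: with probability at least $1-e^{-x}-e^{-t}$, for every $f\in\R^n$,
$$ \norm{\hat f-f^0}^2_n+2\lambda\norm{D_S\hat f}_1\ \le\ \norm{f-f^0}^2_n+\frac{\sigma^2}{n}(\sqrt{2x}+\sqrt{r_S})^2+4\lambda\norm{Df}_1. $$
Dropping the nonnegative term $2\lambda\norm{D_S\hat f}_1$ on the left and taking $\lambda$ at its minimal admissible value $C\sigma\sqrt{\log n(\log(2n)+t)}/n$, the final term becomes $4C\sigma\sqrt{\log n(\log(2n)+t)}\,\norm{Df}_1/n$; relabelling $4C$ as $C$ gives the asserted inequality.

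Since every step is a substitution, there is no genuine obstacle here; the only points requiring care are (i) verifying that the assumption imposed on $S$ is precisely the one under which Lemma~\ref{analysis-l01s05-bis} is valid, and (ii) bookkeeping the absolute constant $C$ through the two rescalings — first from the Hütter--Rigollet constant $C_0$ to the constant guaranteeing admissibility of $\lambda$, and then from $4\lambda$ to the constant in front of $\norm{Df}_1$. An entirely analogous argument, now invoking Theorem~\ref{analysis-t02s04} and Corollary~\ref{analysis-c01s02} together with Lemma~\ref{analysis-l01s05-bis}, would yield the corresponding statement for the square root analysis estimator on the two-dimensional grid.
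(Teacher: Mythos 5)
Your proposal is correct and matches the paper's own (one-line) proof, which likewise obtains the corollary by combining Theorem \ref{analysis-t02s03} with the Hütter--Rigollet bound on $\gamma$ from Lemma \ref{analysis-l01s05-bis}; your verification that the prescribed $\lambda$ dominates $\gamma\sigma\sqrt{2\log(2(n-r_S))/n+2t/n}$ and the subsequent constant bookkeeping simply spell out the substitution the paper leaves implicit.
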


\begin{proof}[Proof of Corollary \ref{analysis-c14s05}]
Corollary \ref{analysis-c14s05} follows by combining Theorem \ref{analysis-t02s03} and Lemma \ref{analysis-l01s05-bis}.
\end{proof}

\underline{\textbf{Square root analysis estimator on the two dimensional grid}}\\

\begin{Coro}\label{analysis-c15s05}
Let $\vec{G}$ be a tree or a cycle graph. Let $ S\in \cals(D_{\vec{G}})$ be an arbitrary active set being s.t. the connected components of $\vec{G}_{-S}$ are square two dimensional grid graphs and satisfying Assumption \ref{analysis-a01s02}. Let $a>0$ and $t \in (0, (n-1)/2 - \log(2(n-r_S)))$.
For a constant $C>0$ large enough, choose $\lambda_0\ge \frac{C}{1-\eta} \sqrt{ \frac{\log n (\log(2n)+t)}{n(n-1)}}$.
Then, $\forall f \in \R^n$, it holds that under Assumption \ref{analysis-a01s02}, with probability at least $1-e^{-t}-4e^{-a}$, 
\begin{equation*}
 \norm{\hat{f}_{\sqrt{}}-f^0}^2_n  \le  \norm{f-f^0}^2_n + \frac{\sigma^2}{n} \left(\sqrt{2a}+\sqrt{r_S} \right)^2 + \frac{C \sigma}{1-\eta} \sqrt{ \frac{\log n (\log(2n)+t)}{n(n-1)}} \norm{D f}_1.
\end{equation*}
\end{Coro}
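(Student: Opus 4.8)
The plan is to obtain Corollary \ref{analysis-c15s05} as a direct specialization of the slow-rate oracle inequality for the square root analysis estimator, Theorem \ref{analysis-t02s04}, to the case where $D$ is the incidence matrix of a square two dimensional grid graph, feeding in the bound on the normalized inverse scaling factor $\gamma$ supplied by Lemma \ref{analysis-l01s05-bis}. In other words, the argument mirrors the one-line proof of Corollary \ref{analysis-c14s05} (which combines Theorem \ref{analysis-t02s03} with Lemma \ref{analysis-l01s05-bis}), with the square root theorem replacing its plain counterpart.

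First I would check that the proposed choice of $\lambda_0$ is admissible for Theorem \ref{analysis-t02s04}. Lemma \ref{analysis-l01s05-bis} applies because $S$ is chosen so that the connected components of $\vec{G}_{-S}$ are square grids, and it gives $\gamma \le C_0 \sqrt{\log(n_{\max})/n}$ for some large enough $C_0$. Since $n_{\max}\le n$ and $r_S\le n$ one has $\log(n_{\max})\le \log n$ and $2\log(2(n-r_S))+2t\le 2(\log(2n)+t)$, hence
$$\frac{1}{1-\eta}\,\gamma\sqrt{\frac{2\log(2(n-r_S))+2t}{n-1}} \le \frac{\sqrt 2\, C_0}{1-\eta}\sqrt{\frac{\log n\,(\log(2n)+t)}{n(n-1)}}.$$
Thus taking the generic constant $C$ in the corollary to be at least $\sqrt 2\, C_0$ makes the lower bound prescribed there at least as large as the lower bound required in Theorem \ref{analysis-t02s04}; the admissible range $t\in(0,(n-1)/2-\log(2(n-r_S)))$ is carried over verbatim, as is the hypothesis that $S$ satisfies Assumption \ref{analysis-a01s02} (the only way that assumption enters is through $S$, $r_S$ and $\gamma$).

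Then I would invoke Theorem \ref{analysis-t02s04}, in the simplified form recorded in the Remark immediately following it, which states that with probability at least $1-4e^{-a}-e^{-t}$, for every $f\in\R^n$,
$$\norm{\hat{f}_{\sqrt{}}-f^0}^2_n \le \norm{f-f^0}^2_n + \frac{\sigma^2}{n}\left(\sqrt{2a}+\sqrt{r_S}\right)^2 + 16\,\sigma\lambda_0\norm{Df}_1.$$
Taking $\lambda_0$ equal to its prescribed lower bound and absorbing the numerical factor $16$ (together with $\sqrt 2\,C_0$) into $C$ turns the last term into $(C\sigma/(1-\eta))\sqrt{\log n\,(\log(2n)+t)/(n(n-1))}\,\norm{Df}_1$, which is exactly the claimed inequality, with the stated probability.

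I do not expect a genuine obstacle: the argument is essentially bookkeeping of constants, and since the constant in Lemma \ref{analysis-l01s05-bis} is itself only ``sufficiently large'', the whole statement is inherently up to a constant, so the various numerical factors ($\sqrt 2$, $16$) are harmless. The only points worth a moment's attention are the crude bounds $n_{\max}\le n$ and $r_S\le n$ used to replace all the logarithms by $\log(2n)$, and verifying that the probability budget $4e^{-a}+e^{-t}$ and the constraint on $t$ are passed through unchanged from Theorem \ref{analysis-t02s04}.
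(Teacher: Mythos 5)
Your proposal is correct and matches the paper's own (one-line) proof, which simply combines Theorem \ref{analysis-t02s04} with the scaling-factor bound of Lemma \ref{analysis-l01s05-bis}; your constant bookkeeping ($\gamma\le C_0\sqrt{\log(n_{\max})/n}$, $n_{\max}\le n$, absorbing $\sqrt{2}$ and $16$ into the generic $C$) just makes explicit what the paper leaves implicit. Note only that the corollary's opening phrase ``tree or a cycle graph'' is evidently a typo for the square two dimensional grid graph, which you implicitly and correctly assumed.
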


\begin{proof}[Proof of Corollary \ref{analysis-c15s05}]
Corollary \ref{analysis-c15s05} follows by combining Theorem \ref{analysis-t02s04} and Lemma \ref{analysis-l01s05-bis}.
\end{proof}

\subsubsection{Comparison with other results}

Consider Corollary \ref{analysis-c11s05} with the choice $f=f^0$ and assume that $\sigma$ does not depend on $n$. Then the following holds with probability at least $1-e^{-x}-e^{-t}$.
\begin{itemize}
\item With $r_S\asymp n^{1/3} (\log(2n)+t)^{1/3}\norm{Df^0}_1^{2/3}$, then $ \norm{\hat{f}-f^0}^2_n= \bigo(n^{-2/3} (\log(2n)+t)^{1/3} \norm{Df^0}_1^{2/3})$ and $\lambda$ explicitely depends on $f^0$.
\item With $r_S \asymp n^{1/3} (\log(2n)+t)^{1/3}$, then $ \norm{\hat{f}-f^0}^2_n= \bigo(n^{-2/3} (\log(2n)+t)^{1/3} \norm{Df^0}_1)$ and $\lambda$ does not explicitely depend on $f^0$.
\end{itemize}

One can reason analogously starting from Corollary \ref{analysis-c13s05} for the square root analysis estimator.

%

In both  cases, if $\norm{Df^0}= \bigo(1)$ we obtain that $ \norm{\hat{f}-f^0}^2_n = \bigo(n^{-2/3}\log^{1/3}(n))$.
However, it is known that the minimax rate for that case (when the graph considered is the path graph) is $ \norm{\hat{f}-f^0}^2_n= \bigo(n^{-2/3})$
and thus our results lead to a redundant log-term. The result about the minimax rate over the class of functions with bounded total variation obtained by entropy calculations (\cite{mamm97-2} and references therein) are not constant-friendly, so that it may well be that, for  $n$ small enough, the log-term is  smaller than the constants of the entropy arguments.

The same remark applies to the case of tree graphs of bounded maximal degrees. For such graphs, \cite{padi17} proved that the minimax rate of estimation of $f^0: \norm{D_{\vec{G}}f^0}_1 \le C$ is $n^{-2/3} C^{2/3}$. Moreover, they proved by entropy arguments that the total variation regularized estimator achieves the minimax rate. We prove that this minimax rate is achieved by the (square root) total variation regularized estimator up to a log term by using constant-friendly arguments (cf. Corollary \ref{analysis-c11s05} and \ref{analysis-c13s05}).

We thus saw that for the path graph, the constant-friendly projection argument introduced by \cite{dala17} to handle the increments of the empirical process might produce optimal rates up to a log-term  for both the  total variation regularized estimator and the square root total variation regularized estimator.

Another question is whether we can retrieve almost minimax rates by Corollary \ref{analysis-c14s05} for  $D_{\vec{G}}$ being the incidence matrix of a two dimensional grid graph. For that case, the minimax rate is $\sqrt{{\log n}/{n}}$ \cite{sadh16} and an oracle inequality proved by \cite{hutt16} almost retrieves it. Moreover, a natural scaling for that case is $ \norm{D_{\vec{G}}f^0}_1\asymp n^{1/2}$ (\cite{sadh16}). Note that the part of Assumption \ref{analysis-a01s02} concerning $\norm{D_{\vec{G}}f^0}_1$, which translates to
$ \norm{Df^0}_1= \bigo\left( n / \log n \right)$, is thus satisfied.

Thus, for $t,\ x>0$ fixed, from Corollaries \ref{analysis-c14s05} and \ref{analysis-c15s05} we get that, if $S_0$ is s.t. the connected components of $\vec{G}_{-S_0}$ are square two dimensional grid graphs and
$$r_{S_0} = \bigo(n^{-1/2}\log n),$$
 under the canonical scaling $\norm{Df^0}_1\asymp n^{1/2}  $ we have the rate
$$ \log n/\sqrt{n},$$
which corresponds to the minimax rate up to a log term. Note however that, due to the utilization of Lemma \ref{analysis-l01s05-bis}, Corollaries \ref{analysis-c14s05} and \ref{analysis-c15s05}, from which this insight is derived, are not constant-friendly.
\section{Conclusion}
We  introduced a class of active sets dependent on the analysis operator $D$, to which it is natural to restrict the attention. Indeed, as some examples from total variation regularization on graphs show, there can be some elements of $\mathcal{P}([m])$ which can not be seen as true active sets of any signal, depending on the graph structure.

We then derived oracle inequalities with fast rates under some compatibility conditions and oracle inequalities with slow rates. The results with fast rates show that, if one can find a suitable bound on the weighted weak compatibility constant, the analysis estimator and its square root version are adaptive, i.e. they can adapt to the unknown sparsity of $Df^0$. For both the analysis and the square root analysis estimators, the results with slow rates were used as tool to retrieve in a simple and constant-friendly way minimax rates obtained by entropy calculations, at the price of an extra log factor. The  choice of the tuning parameters $\lambda$ and $\lambda_0$, which includes some information about the structure of the analysis operator $D$ and of the active set $S$ via the inverse scaling factor $\gamma$, seems to be advantageous in theoretical terms and allows us to show that the ``slow'' rates can almost match the minimax lower bound for the total variation regularized estimator on graph structures as the path graph and tree graphs with bounded maximal degree.

We obtained parallel and very similar results for both the analysis and the square root analysis estimators. The differences in these results come from the fact that for the square root analysis estimator we first have to prove that the estimator does not overfit and that the KKT conditions hold.  In spite of being mathematically more involved, the results for the square root analysis estimator tell us that we can get with high probability  theoretical guarantees being very similar to the ones obtained for the analysis estimator by choosing a tuning parameter not depending on the unknown noise level. This fact might be helpful in practice and might speak in favor of the utilization of the square root analysis estimator.

We then narrowed down our results to (square root) total variation regularized estimators over graphs. For fast rates we considered the cases of the path graph and of the cycle graph. In these cases we were able to show that the compatibility conditions are satisfied.

For the case of slow rates, we obtained  oracle inequalities matching up to a log term the optimal rate over the path graph, the two dimensional grid graph and tree graphs of maximal bounded degree. These results do not require any compatibility condition. 

These oracle inequalities can be interpreted in two senses. Either we can choose a smaller tuning parameter depending on $S$ and obtain better rates. Or we can choose a larger tuning parameter not depending on $S$ and get worse rates. This might be a justification for incorporating eventual prior knowledge of  $S$ into  the tuning parameter.

The main tool used to derive the oracle inequalities presented in this paper is a bound on the increments of the empirical process inspired by the projection arguments by \cite{dala17}. This bound is very simple and constant-friendly, while entropy bounds are more involved and can have large constants. There are two routes one can take after having bounded the increments of the empirical process by projection arguments. Either one uses a more refined version of the bound on the increments of the empirical process and then bounds the compatibility constant to derive fast rates. Or one bounds the increments of the empirical process in a rougher way and obtains oracle inequalities with slow rates. In this last case one only needs to bound the inverse scaling factor. Bounds on the inverse scaling factor can be very simple and constant-friendly, while bounds on the compatibility constant can sometimes lead to large constants (cf. \cite{vand19}). Moreover, results with slow rates have been shown to almost retrieve the minimax rate in a constant-friendly way also in other settings, for instance in higher order total variation regularization (\cite{vand19}). 
If we compare the results obtained by entropy calculations with our results with slow rates, we see that, at the expense of a log term, we are able to retrieve almost the same rate by two simple steps: the constant-friendly bound on the increments of the empirical process and the bound on the inverse scaling factor. The bound on the inverse scaling factor is constant-friendly for graph structures as tree graphs and cycle graphs, while the bound on the inverse scaling factor for the two dimensional grid graph we borrow from \cite{hutt16} is more involved. For total variation regularized estimators on the path graph and on tree graphs of bounded maximal degree, we thus obtain nonasymptotic counterparts, in form of oracle inequalities with slow  rates, to results found in the previous literature (\cite{mamm97-2, padi17}).

A question for further investigation is the possibility to use the framework exposed here to obtain oracle inequalities with fast rates for other graph structures. The answer depends on the ability to lower bound the compatibility constant for graphs other than tree graphs and cycles. We leave this questions to future research.


\appendix

\section{Probability inequalities}\label{analysis-appA}

We expose three lemmas helping us to deal with the random part of the oracle inequalities.

\begin{lemma}[The maximum of $p$ random variables, Lemma 17.5 in \cite{vand16}]\label{l175vand16}
Let $V_1, \ldots, V_{p}$ be real valued random variables. Assume  $\forall j \in \{1,\ldots, p\}$ and $\forall r>0$ that $\ex[e^{r \abs{V_j}} ]\leq 2 e^{\frac{r^2}{2}}$.
Then, $\forall t>0$
\begin{equation*}
\pr\left(\max_{1\leq j \leq p} \abs{V_j}\geq \sqrt{2\log (2p)+2t} \right) \leq e^{-t}.
\end{equation*}
\end{lemma}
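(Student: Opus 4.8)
The plan is to reduce the maximum to a union bound over the $p$ coordinates, and then control each single tail probability by a Chernoff-type argument using the hypothesized exponential moment bound, optimizing the free parameter $r$.

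First I would write, for any $a>0$,
$$
\pr\left(\max_{1\le j\le p}\abs{V_j}\ge a\right)\le \sum_{j=1}^p \pr\left(\abs{V_j}\ge a\right),
$$
so it suffices to bound each term $\pr(\abs{V_j}\ge a)$ uniformly in $j$. For a fixed $j$ and any $r>0$, Markov's inequality applied to the nonnegative random variable $e^{r\abs{V_j}}$ gives
$$
\pr\left(\abs{V_j}\ge a\right)=\pr\left(e^{r\abs{V_j}}\ge e^{ra}\right)\le e^{-ra}\,\ex\!\left[e^{r\abs{V_j}}\right]\le 2\,e^{r^2/2-ra},
$$
where the last step uses the assumption $\ex[e^{r\abs{V_j}}]\le 2e^{r^2/2}$. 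Since this holds for every $r>0$, I would choose $r$ to minimize the exponent $r^2/2-ra$; the minimizer is $r=a>0$ (admissible), yielding $\pr(\abs{V_j}\ge a)\le 2e^{-a^2/2}$. Summing over $j$ gives $\pr(\max_j\abs{V_j}\ge a)\le 2p\,e^{-a^2/2}$.

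Finally I would substitute $a=\sqrt{2\log(2p)+2t}$, so that $a^2/2=\log(2p)+t$ and $2p\,e^{-a^2/2}=2p\cdot(2p)^{-1}e^{-t}=e^{-t}$, which is exactly the claimed bound. There is no real obstacle here: the only points worth a line of care are that the hypothesis is assumed for \emph{all} $r>0$ (so the choice $r=a$ is legitimate) and that $a>0$ guarantees the optimizing $r$ is positive; the factor $2$ in the moment assumption is precisely what is absorbed by the $2p$ inside the logarithm.
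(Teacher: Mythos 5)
Your proof is correct. The paper itself does not prove this lemma but cites it as Lemma 17.5 of \cite{vand16}, and your argument is the standard one used there: the only (immaterial) difference is that the cited source first bounds $\ex\bigl[e^{r\max_j\abs{V_j}}\bigr]\le \sum_{j=1}^p \ex\bigl[e^{r\abs{V_j}}\bigr]\le 2p\,e^{r^2/2}$ and then applies Markov's inequality once, whereas you apply Markov coordinate-wise and then take the union bound over $j$; both routes lead to the same optimized exponent $r=a$ and the same conclusion, and your care about $r>0$ and the role of the factor $2$ is exactly right.
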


\begin{lemma}[The special case of $\chi^2$ random variables, Lemma 1 in \cite{laur00}, Lemma 8.6 in \cite{vand16}]\label{l86vand16}
Let $X\sim\chi^2_d$. Then, $\forall x>0$
\begin{equation*}
\pr\left(X\geq d+2\sqrt{dx}+2x \right)\leq e^{-x} \text{ and }  \pr \left(X \le d-2\sqrt{dx} \right)\leq e^{-x}.
\end{equation*}
\end{lemma}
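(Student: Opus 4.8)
The plan is to prove Lemma~\ref{l86vand16} by the Cram\'er--Chernoff (exponential Markov) method, exploiting the explicit moment generating function of a chi-squared law; this is the classical argument of Laurent--Massart. First I would write $X=\sum_{i=1}^d Z_i^2$ with $Z_1,\dots,Z_d$ i.i.d.\ standard normal, so that for every $s<1/2$ one has $\ex[e^{sX}]=(1-2s)^{-d/2}$ (independence enters only through this product form). For the upper tail, applying Markov's inequality to $e^{sX}$ gives, for each $s\in(0,1/2)$,
$$\pr\bigl(X \ge d + 2\sqrt{dx} + 2x\bigr) \le \exp\Bigl(-s\bigl(d + 2\sqrt{dx} + 2x\bigr) - \tfrac{d}{2}\log(1-2s)\Bigr).$$

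The next step is an elementary one-dimensional bound on the log-MGF: $-\tfrac12\log(1-2s)-s \le \tfrac{s^2}{1-2s}$ on $(0,1/2)$, which follows since both sides vanish at $s=0$ and the derivative of the right side dominates that of the left. Writing $t:=2\sqrt{dx}+2x$, the exponent above is then at most $h(s):=-st+\tfrac{d s^2}{1-2s}$, and minimizing over $s\in(0,1/2)$ (conveniently via the substitution $u=1-2s$, which turns the problem into minimizing a rational function) gives
$$\min_{s} h(s) = -\tfrac{1}{2}\bigl(d + t - \sqrt{d^2 + 2dt}\bigr).$$
The role of the specific threshold is the algebraic identity $d^2+2dt=(d+2\sqrt{dx})^2$, whence $d+t-\sqrt{d^2+2dt}=2x$, so the whole exponent collapses to exactly $-x$, yielding the first inequality.

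For the lower tail I would argue symmetrically with a negative exponential parameter: for $s>0$, $\ex[e^{-sX}]=(1+2s)^{-d/2}$, so Markov applied to $e^{-sX}$ gives $\pr(X \le d - 2\sqrt{dx}) \le \exp\bigl(s(d-2\sqrt{dx})-\tfrac{d}{2}\log(1+2s)\bigr)$ (the statement being vacuous when $d-2\sqrt{dx}<0$). Here the elementary bound is $s-\tfrac12\log(1+2s)\le s^2$ for $s>0$ (again: equal at $0$, derivatives comparable), which reduces the exponent to $-2\sqrt{dx}\,s + d s^2$; this is minimized at $s=\sqrt{x/d}$ with minimum value $-x$, giving the second inequality.

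The main obstacle is not conceptual but bookkeeping: one has to get the two calculus lemmas on the log-MGF right and then verify that the optimized Chernoff exponent simplifies \emph{exactly} to $-x$ for the prescribed thresholds $d+2\sqrt{dx}+2x$ and $d-2\sqrt{dx}$ --- these particular expressions are precisely the ones that make the optimization close in clean form. Everything else is the routine exponential-moment tail bound.
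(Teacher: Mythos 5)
Your proposal is correct: the two calculus bounds on the log-MGF hold (both sides vanish at $s=0$ and the derivatives compare as you claim), the optimized exponent is indeed $-\tfrac12\bigl(d+t-\sqrt{d^2+2dt}\bigr)$, and the identity $d^2+2dt=(d+2\sqrt{dx})^2$ for $t=2\sqrt{dx}+2x$ collapses it to $-x$, with the symmetric argument at $s=\sqrt{x/d}$ giving the lower tail. The paper does not prove this lemma but imports it by citation from Laurent and Massart (2000) (Lemma 1) and van de Geer (2016) (Lemma 8.6), and your Cram\'er--Chernoff argument is exactly the standard proof behind that citation, so there is nothing to add.
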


\begin{remark}
Note that from Lemma \ref{l86vand16} it follows that
\begin{equation*}
\pr\left(\sqrt{X}\leq \sqrt{d}+\sqrt{2x} \right)\geq \pr\left(X\leq d+2\sqrt{dx}+2x \right)\geq 1-e^{-x}.
\end{equation*}
\end{remark}

\begin{lemma}[Lemma 8.1 in \cite{vand16}]\label{prob3}
For $n\ge 2$, let $\epsilon \sim \N_n(0, \sigma^2\text{I}_n)$. Then, $\forall u \in \R^n: \norm{u}_n=1$ we have that, for $t\in (0,(n-1)/2)$,
$$ \pr \left(\frac{u'\epsilon}{n\nee_n}> \sqrt{\frac{2t}{n-1}} \right)\le 2e^{-t}.$$
\end{lemma}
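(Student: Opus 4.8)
The plan is to reduce the studentized quantity to the cosine of the angle between $\e$ and a fixed unit vector, and then to split the noise into two independent pieces — a one–dimensional Gaussian and an independent $\chi^2_{n-1}$ — so that the probability factors into a Gaussian tail bound integrated against a chi–square law. This avoids any covering/chaining argument; everything is exact Gaussian algebra.

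First I would rewrite the statistic. Since $\nee_n = \norm{\e}_2/\sqrt n$ we have $n\nee_n = \sqrt n\,\norm{\e}_2$, and $\norm{u}_n = 1$ forces $\norm{u}_2 = \sqrt n$, so with $\tilde u := u/\sqrt n$ (a Euclidean unit vector) the ratio is exactly $\tilde u'\e/\norm{\e}_2$. Decompose $\e = a\,\tilde u + r$ with $a := \tilde u'\e$ and $r := (\text{I}_n - \tilde u\tilde u')\e$. By orthogonal invariance of the isotropic Gaussian (Cochran's theorem), $a \sim \N(0,\s^2)$ and $V := \norm{r}_2^2 \sim \s^2\chi^2_{n-1}$ are independent, and $\norm{\e}_2^2 = a^2 + V$. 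Now set $b := \sqrt{2t/(n-1)}$; the hypothesis $t < (n-1)/2$ gives $b \in (0,1)$. On $\{\tilde u'\e/\norm{\e}_2 > b\}$ we must have $a > 0$ and $a^2 > b^2(a^2+V)$, hence, dividing by $1-b^2>0$, $a > c\sqrt V$ where $c^2 := b^2/(1-b^2) = 2t/(n-1-2t)$; the implication reverses, so the two events coincide.

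Then I would estimate by conditioning on $V$. Using the standard Gaussian tail bound $\pr(a > x) \le \tfrac12 e^{-x^2/(2\s^2)}$ for $x\ge 0$,
$$ \pr\!\left(\frac{u'\e}{n\nee_n} > b\right) = \ex\big[\pr(a > c\sqrt V \mid V)\big] \le \tfrac12\,\ex\big[e^{-c^2 V/(2\s^2)}\big]. $$
Since $V/\s^2 \sim \chi^2_{n-1}$, whose moment generating function is $s \mapsto (1-2s)^{-(n-1)/2}$ for $s<1/2$, evaluating at $s=-c^2/2$ gives $\ex[e^{-c^2 V/(2\s^2)}] = (1+c^2)^{-(n-1)/2}$. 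As $1+c^2 = (n-1)/(n-1-2t)$, the bound becomes
$$ \tfrac12\Big(1-\tfrac{2t}{n-1}\Big)^{(n-1)/2} \le \tfrac12\,e^{-t} \le 2e^{-t}, $$
where the first inequality is $1-x\le e^{-x}$. This is the claimed bound (in fact with the better constant $\tfrac12$).

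The computation is essentially mechanical; the step carrying the idea is the passage from the studentized event $\{\tilde u'\e/\norm{\e}_2 > b\}$ to $\{a > c\sqrt V\}$, after which conditioning on $V$ turns the estimate into a plain Gaussian tail bound and the exponential decay is produced, for free, by the $\chi^2_{n-1}$ moment generating function at a negative argument. The only point that needs care is checking $b<1$, which is exactly what $t<(n-1)/2$ guarantees and what legitimizes dividing by $1-b^2$ in the reformulation of the event.
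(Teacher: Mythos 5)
Your argument is correct. Note that the paper does not prove this lemma at all: it is imported verbatim as Lemma 8.1 of the cited book, so there is no in-paper proof to compare against. Your derivation is a clean, self-contained substitute: writing the studentized statistic as $\tilde u'\epsilon/\norm{\epsilon}_2$ with $\tilde u = u/\sqrt n$, splitting $\epsilon$ into the independent pieces $a=\tilde u'\epsilon\sim\N(0,\sigma^2)$ and $V=\norm{(\text{I}_n-\tilde u\tilde u')\epsilon}_2^2\sim\sigma^2\chi^2_{n-1}$, checking (using $t<(n-1)/2$, hence $b=\sqrt{2t/(n-1)}<1$) that the event is exactly $\{a>c\sqrt V\}$ with $c^2=b^2/(1-b^2)$, and then combining the Gaussian tail bound $\pr(a>x)\le\tfrac12 e^{-x^2/(2\sigma^2)}$ with the $\chi^2_{n-1}$ moment generating function at the negative argument $-c^2/2$ gives $\tfrac12\bigl(1-\tfrac{2t}{n-1}\bigr)^{(n-1)/2}\le\tfrac12 e^{-t}$, which is even sharper than the stated constant $2e^{-t}$. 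This is essentially the same probabilistic content as the textbook result (the squared studentized statistic is a Beta$(1/2,(n-1)/2)$ variable, which is what your polar decomposition encodes), but your conditioning-plus-MGF route is elementary and verifiable line by line; all the side conditions you flag ($b<1$, $n-1-2t>0$, a.s.\ $\epsilon\neq 0$) are exactly the ones needed, so there is no gap.
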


\begin{remark}
Let $u_1, \ldots, u_p\in \R^n$ be vectors. Then by the union bound and by Lemma \ref{prob3} we have that for $t'\in (0,(n-1)/2)$
$$ \pr\left(\max_{i \in [p]} \frac{\abs{u'_i\e}}{n\norm{u_i}_n \nee_n} > \sqrt{\frac{2t'}{n-1}} \right)\le 2p e^{-t'}.$$
Now select $t=t'-\log(2p)$. Then we have that for $t\in (0,(n-1)/2-\log(2p))$,
$$ \pr\left(\max_{i \in [p]} \frac{\abs{u'_i\e}}{n\norm{u_i}_n \nee_n} > \sqrt{\frac{2 \log(2p)+2t}{n-1}} \right)\le e^{-t}.$$
\end{remark}
\section{Proofs of Section \ref{analysis-s03}}\label{analysis-appC}

\subsection{Basic inequality}
The case of the analysis estimator is more simple than the one of the square root analysis estimator, because we have the basic inequality without assuming any extra conditions.

\begin{lemma}[Basic inequality]\label{analysis-l01s02}
For the analysis estimator we have the so called basic inequality, i.e. $\forall f \in \R^n$
\begin{equation*}
\norm{\hat{f}-f^0}^2_n+\norm{\hat{f}-f}^2_n  \le \norm{f-f^0}^2_n + \frac{2\epsilon'(\hat{f}-f)}{n} + 2\lambda ( \norm{Df}_1- \norm{D\hat{f}}_1).
\end{equation*}
\end{lemma}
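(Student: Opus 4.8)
The plan is to read off the claim from the defining optimality of $\hat f$ as the minimizer of the convex functional $G(f) := \norm{Y-f}^2_n + 2\lambda\norm{Df}_1$, converting first-order optimality into the stated inequality by means of the polarization identity for $\norm{\cdot}_n$. The first step is to upgrade the bare optimality $G(\hat f)\le G(f)$ to the strengthened form $G(\hat f) + \norm{\hat f - f}^2_n \le G(f)$ for every $f\in\R^n$; this is where the extra summand $\norm{\hat f - f}^2_n$ on the left-hand side comes from. It follows from strong convexity of the quadratic loss: one can either pick a subgradient $g$ of $f\mapsto 2\lambda\norm{Df}_1$ at $\hat f$ that cancels the gradient of $f \mapsto \norm{Y-f}^2_n$ at $\hat f$ and combine the exact second-order Taylor expansion of that quadratic term around $\hat f$ (whose curvature term is precisely $\norm{f-\hat f}^2_n$) with the subgradient inequality for the penalty; or, avoiding subgradients altogether, test the optimality of $\hat f$ against the convex combination $f_t := (1-t)\hat f + tf$, use $\norm{Df_t}_1 \le (1-t)\norm{D\hat f}_1 + t\norm{Df}_1$, divide by $t>0$ and let $t\downarrow 0$. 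The second route delivers at once the inequality $\tfrac{2}{n}(Y-\hat f)'(f-\hat f) \le 2\lambda(\norm{Df}_1 - \norm{D\hat f}_1)$, which already suffices for what follows.

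Next I would substitute $Y = f^0 + \epsilon$, so that $Y - \hat f = (f^0-\hat f) + \epsilon$, and split the resulting inner product into a signal part $\tfrac{2}{n}(f^0-\hat f)'(f-\hat f)$ and a noise part $\tfrac{2}{n}\epsilon'(f-\hat f)$. For the signal part I would apply the polarization identity with $a = f^0-\hat f$, $b = f-\hat f$, $a-b = f^0-f$, namely $\tfrac{2}{n}(f^0-\hat f)'(f-\hat f) = \norm{f^0-\hat f}^2_n + \norm{f-\hat f}^2_n - \norm{f^0-f}^2_n$. Inserting this identity and rearranging — moving $\norm{f-f^0}^2_n$ to the right and rewriting the noise term as $\tfrac{2\epsilon'(\hat f - f)}{n}$ — yields exactly the asserted basic inequality.

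This is a purely deterministic manipulation with no probabilistic content, so there is no serious obstacle. The only point that deserves a little care is the passage to the strengthened optimality carrying the curvature term $\norm{\hat f - f}^2_n$, since the naive bound $G(\hat f)\le G(f)$ alone would lose that term; and one should track the normalization $\norm{\cdot}^2_n = \tfrac{1}{n}\norm{\cdot}^2_2$ throughout so that the cross term emerges with exactly the constant $\tfrac{2}{n}$ stated.
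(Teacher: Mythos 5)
Your proposal is correct and follows essentially the same route as the paper: obtain the first-order optimality inequality $\tfrac{2}{n}(Y-\hat f)'(f-\hat f)\le 2\lambda(\norm{Df}_1-\norm{D\hat f}_1)$ (the paper does this via the KKT conditions and the dual norm bound, your convex-combination limit is a harmless variant), then substitute $Y=f^0+\epsilon$ and apply polarization to the signal part. No gaps.
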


\begin{proof}[Proof of Lemma \ref{analysis-l01s02}]
The KKT conditions for the analysis estimator write as
$$ \frac{Y-\hat{f}}{n}= \lambda D' \partial \norm{D\hat{f}}_1.$$
Thanks to the chain rule of the subdifferential, $D'\partial \norm{D\hat{f}}_1$ is the subdifferential of $\norm{Df}_1$ with respect to $f$ at $\hat{f}$. We have that, for $\hat{f}\in \R^n$, $ {\hat{f}'(Y-\hat{f})}/{n}= \lambda \norm{D\hat{f}}_1$ and that, for a generic $f \in \R^n$, $ {f'(Y-\hat{f})}/{n}= \lambda (Df)' \partial \norm{D\hat{f}}_1 \le \lambda \norm{Df}_1$, where the last inequality follows by the dual norm inequality and by the fact that $ \norm{\partial \norm{D\hat{f}}_1}_{\infty} \le 1$.

By subtracting the first of the two above expressions from the second, we find that

$$ \frac{(f-\hat{f})'(f^0-\hat{f})}{n} \le \frac{\epsilon'(\hat{f}-f)}{n} + \lambda ( \norm{Df}_1- \norm{D\hat{f}}_1).$$

By polarization we obtain the basic inequality
\begin{equation*}
\norm{\hat{f}-f^0}^2_n+\norm{\hat{f}-f}^2_n  \le  \norm{f-f^0}^2_n + \frac{2\epsilon'(\hat{f}-f)}{n} + 2\lambda ( \norm{Df}_1- \norm{D\hat{f}}_1).
\end{equation*}
\end{proof}



\subsection{Bound on the increments of the empirical process}

\begin{lemma}\label{analysis-l02s02}
Let $S \in \cals$ be arbitrary and $x, \ t>0$. Choose $\lambda \ge {\gamma\sigma }\sqrt{2\log(2(n-r_S))/n+ 2t/n}$. Then, $\forall f \in \R^n$, it holds that, with probability at least $1-e^{-x}-e^{-t}$,
\begin{align*}
\frac{\epsilon'f}{n}&\le \frac{\lambda}{\gamma} \norm{\Omega_{-S}D_{-S}f}_1 +\sigma \left(\sqrt{\frac{2x}{n}} + \sqrt{\frac{r_S}{n}} \right)\norm{f}_n\\
&\le \lambda  \norm{D_{-S}f}_1 + \sigma \left(\sqrt{\frac{2x}{n}} + \sqrt{\frac{r_S}{n}} \right)\norm{f}_n.
\end{align*}
\end{lemma}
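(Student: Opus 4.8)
The strategy is to decompose the signal $f$ according to the orthogonal splitting $f = f_{\N_{-S}} + f_{\N_{-S}^\perp}$ introduced in the notation section, so that $\e'f/n = \e'f_{\N_{-S}}/n + \e'f_{\N_{-S}^\perp}/n$, and bound the two pieces by different means. For the low-rank part $f_{\N_{-S}} = \Pi_{\N_{-S}}f$, which lies in a subspace of dimension $r_S$, I would apply Cauchy--Schwarz to get $\e'f_{\N_{-S}}/n \le \norm{\Pi_{\N_{-S}}\e}_n \norm{f_{\N_{-S}}}_n \le \norm{\Pi_{\N_{-S}}\e}_n \norm{f}_n$, the last step since orthogonal projection is norm-reducing. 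Then $n\norm{\Pi_{\N_{-S}}\e}_n^2/\sigma^2 \sim \chi^2_{r_S}$, so by Lemma \ref{l86vand16} (in the form noted in the remark after it), with probability at least $1-e^{-x}$ we have $\norm{\Pi_{\N_{-S}}\e}_n \le \sigma(\sqrt{r_S/n} + \sqrt{2x/n})$. This handles the second summand in the claimed bound.

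For the antiprojection part $f_{\N_{-S}^\perp} = \Pi_{\N_{-S}^\perp}f = A_{\N_{-S}}f \in \text{rowspan}(D_{-S})$, I would write it in terms of $D_{-S}$. Using $\Pi_{\N_{-S}^\perp} = D_{-S}^+ D_{-S}$, we have $f_{\N_{-S}^\perp} = D_{-S}^+ D_{-S} f$, hence $\e' f_{\N_{-S}^\perp}/n = \e' D_{-S}^+ D_{-S}f/n = \big((D_{-S}^+)'\e\big)' (D_{-S}f)/n$. Expanding the inner product over the rows of $D_{-S}$ (indexed via the map $i^*$) and using Hölder's inequality, this is bounded by $\max_{i\in -S} \frac{|\,(d^+_{i^*(i)})'\e\,|}{n} \cdot \norm{D_{-S}f}_1$. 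Now each $d^+_{i^*(i)}$ has $\norm{d^+_{i^*(i)}}_n = \omega_i$, so $(d^+_{i^*(i)})'\e / (n\omega_i)$ is a standard-Gaussian-type variable satisfying the subgaussian bound of Lemma \ref{l175vand16} (after normalizing by $\sigma$); there are $n - r_S$ such columns, since $D_{-S}^+ \in \R^{n\times(m-s)}$ but the relevant rank is $n-r_S$ — more carefully, one reduces to $n-r_S$ independent coordinates by passing to the column space. Applying Lemma \ref{l175vand16} with $p = n-r_S$ gives, with probability at least $1-e^{-t}$,
$$ \max_{i\in -S} \frac{|\,(d^+_{i^*(i)})'\e\,|}{n\,\omega_i} \le \sigma\sqrt{\frac{2\log(2(n-r_S)) + 2t}{n}} \le \frac{\lambda}{\gamma},$$
by the choice of $\lambda$. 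Thus $\e'f_{\N_{-S}^\perp}/n \le \frac{\lambda}{\gamma}\norm{\Omega_{-S}D_{-S}f}_1$, recalling $\Omega_{-S} = \text{diag}(\{\omega_i\}_{i\in -S})$. Finally, since $\omega_i \le \gamma$ for all $i\in -S$, we have $\norm{\Omega_{-S}D_{-S}f}_1 \le \gamma\norm{D_{-S}f}_1$, giving the second displayed inequality. Combining the two bounds via a union bound yields the result with probability at least $1-e^{-x}-e^{-t}$.

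\textbf{Main obstacle.} The delicate point is the counting argument for the maximum: one must argue that the subgaussian maximal inequality applies with exactly $n-r_S$ terms rather than $m-s$ (which could be much larger), i.e. that only $n - r_S = \rank(D_{-S})$ of the normalized variables $(d^+_{i^*(i)})'\e/(n\omega_i)$ are "genuinely" present — this follows because $\e'D_{-S}^+ D_{-S}f$ only depends on the projection of $\e$ onto the $(n-r_S)$-dimensional rowspan of $D_{-S}$, and one can choose an orthonormal basis there to reduce to $n - r_S$ i.i.d.\ $\N(0,\sigma^2)$ coordinates; then Hölder against the $\ell_1$ norm $\norm{\Omega_{-S}D_{-S}f}_1$ must be set up so that the weights $\omega_i$ land in the right place. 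Verifying that $\ex[e^{r|V_j|}] \le 2e^{r^2/2}$ for the relevant normalized $V_j$ (standard Gaussian tail) is routine. The rest is bookkeeping with the definitions of $\omega$, $\Omega$, $\gamma$, and the pseudoinverse identities already recorded in the excerpt.
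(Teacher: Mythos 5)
Your proof is essentially the paper's own argument: the same split of $\e'f/n$ into the projection onto $\N(D_{-S})$ (handled by Cauchy--Schwarz and the $\chi^2_{r_S}$ bound of Lemma \ref{l86vand16}, giving the $\sigma(\sqrt{2x/n}+\sqrt{r_S/n})\norm{f}_n$ term) and the part written as $\e'D_{-S}^+D_{-S}f/n$ (handled by H\"older against $\norm{\Omega_{-S}D_{-S}f}_1$, the Gaussian maximal inequality of Lemma \ref{l175vand16}, and $\omega_i\le\gamma$ for the weaker bound), combined by a union bound. One correction to your ``main obstacle'': passing to an orthonormal basis of $\mathrm{rowspan}(D_{-S})$ does not reduce the number of terms in the maximum --- the max runs over the $m-s$ columns of $D_{-S}^+$ no matter what the dimension of their span is --- and the paper instead resolves the count by taking $D_{-S}$ to be of full row rank, so that $m-s=\rank(D_{-S})=n-r_S$ and Lemma \ref{l175vand16} applies with $p=n-r_S$, matching the $\log(2(n-r_S))$ in the choice of $\lambda$.
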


\begin{proof}[Proof of Lemma \ref{analysis-l02s02}]
We have that
$$ \frac{\epsilon'f}{n}= \underbrace{\frac{\epsilon'\Pi_{\N^{\perp}(D_{-S})}f}{n}}_{1.} + \underbrace{\frac{\epsilon'\Pi_{\N(D_{-S})}f}{n}}_{2.}.$$

\begin{enumerate}
\item

We have that, since $D_{-S}$ is of full rank,
$$
\frac{\epsilon'\Pi_{\N^{\perp}(D_{-S})}f}{n} =\frac{\epsilon'D_{-S}^{+} D_{-S}f}{n}.
$$

For $\lambda>0$ define the set
\begin{eqnarray*} \mathcal{T}&:=&\left\{\left|\frac{\epsilon' d_i^+}{n}\right|\le \frac{\lambda \norm{d_i^+}_n}{\gamma}, \forall i \in [m-s] \right\}\\
&=&\left\{\max_{i\in[m-s]}\left|\frac{\epsilon'd^+_i}{\sigma\norm{d^+_i}_2}\right|\leq \frac{\lambda \sqrt{n}}{\gamma\sigma}   \right\}= \left\{\max_{1\leq i\leq m-s}\abs{V_i}\leq \frac{\lambda \sqrt{n}}{\gamma\sigma}\right\},
\end{eqnarray*}
where $V_i={\epsilon'd^+_i}/{(\sigma\norm{d^+_i}_2)} \sim\N(0,1), i \in [m-s]$, since $\e'd^+_i\sim\N(0,\sigma^2 \norm{d^+_i}^2_2)$.

Since $\gamma= \norm{\Omega}_{\infty}$, on $\mathcal{T}$ we have that
$$
\frac{\epsilon'D_{-S}^{+} D_{-S}f}{n} \le \frac{\lambda}{\gamma} \norm{\Omega_{-S}D_{-S}f}_1 \le  \lambda  \norm{D_{-S}f}_1.
$$

To find a lower bound on $ \pr(\mathcal{T})$ we apply Lemma \ref{l175vand16} to $\mathcal{T}$.

The moment generating function of $\abs{V_i}$ is $
\ex\left[e^{r\abs{V_i}} \right]=2(1-\Phi(-r))e^{\frac{r^2}{2}}\leq 2 e^{\frac{r^2}{2}},\ \forall r>0$.

Choosing, for some $t>0$, $\lambda\ge{\gamma\sigma }\sqrt{2(\log(2(n-r_S))+ t)/n}$, e.g. $\lambda={\gamma\sigma }\sqrt{2(\log(2n)+ t)/n}$, and applying Lemma \ref{l175vand16} with $p=m-s=n-r_S$ and $t>0$, we obtain that $\pr(\mathcal{T})\geq 1-e^{-t}$.

\item
We have that
$$
\frac{\epsilon'\Pi_{\N(D_{-S})}f}{n}
\le \norm{\Pi_{\N(D_{-S})}\epsilon}_n \norm{f}_n.
$$

For $x>0$, define the set 
$$ \mathcal{X}:=\left\{ \norm{\Pi_{\N(D_{-S})}\epsilon}_n \leq \sqrt{\frac{\sigma^2}{n}}\left(\sqrt{r_S}+\sqrt{2x}\right) \right\}.$$

On $\mathcal{X}$ we have that $$\norm{\Pi_{\N(D_{-S})}\epsilon}_n \leq \sqrt{\frac{\sigma^2}{n}}\left(\sqrt{r_S}+\sqrt{2x}\right).$$

Since $\N(D_{-S})$ is a linear space of dimension $r_S$, we have that $$ \frac{\norm{\Pi_{\N(D_{-S})}\epsilon}^2_2}{{\sigma^2}}\sim \chi^2_{r_S}.$$
Moreover note that
\begin{equation*}
 \norm{\Pi_{\N(D_{-S})}\epsilon}^2_n=\frac{\sigma^2}{n}{\frac{\norm{\Pi_{\N(D_{-S})}\epsilon}^2_2}{{\sigma^2}}}.
\end{equation*}
By applying Lemma \ref{l86vand16} for some $x>0$ we thus get that $\pr(\mathcal{X})\geq 1-e^{-x}$.

\end{enumerate}
\end{proof}

\begin{remark}
To obtain fast rates by using compatibility conditions one makes use of the more refined bound given by Lemma \ref{analysis-l02s02} involving $\norm{\Omega_{-S} D_{-S} f}_1/\gamma$. This term will flow into the weighted compatibility constant.

To obtain slow rates without needing compatibility conditions one utilizes the less refined version of the bound given by Lemma \ref{analysis-l02s02} involving $ \norm{ D_{-S} f}_1$.
\end{remark}

\subsection{Proof of the oracle inequalities}

\begin{proof}[Proof of Theorem \ref{analysis-t01s03}]

%

By Lemma \ref{analysis-l01s02} we have the basic inequality.
By the triangle inequality, we have 
\begin{eqnarray*}
\norm{Df}_1- \norm{D\hat{f}}_1 &=& \norm{D_S f}_1- \norm{D_S \hat{f}}_1 - (\norm{D_{-S}f}_1+ \norm{D_{-S}\hat{f}}_1) +  2 \norm{D_{-S}f}_1\\
&\le & \norm{D_S (\hat{f}-f)}_1 - \norm{D_{-S}(\hat{f}-f)}_1 + 2 \norm{D_{-S}f}_1.
\end{eqnarray*}

%
%
%

We now handle the random part, which is constituted by an increment of the empirical process, by using Lemma \ref{analysis-l02s02}.
By Lemma \ref{analysis-l02s02} we have that 
with probability at least $1-e^{-x}-e^{-t}$,
$$
\frac{\epsilon'(\hat{f}-f)}{n} \le \frac{\lambda}{\gamma} \norm{\Omega_{-S}D_{-S}(\hat{f}-f)}_1 + \left(\sqrt{\frac{2x}{n}} + \sqrt{\frac{r_S}{n}} \right)\norm{\hat{f}-f}_n
.$$
Putting the pieces together, we get that, 
\begin{eqnarray*}
\norm{\hat{f}-f^0}^2_n + \norm{\hat{f}-f}^2_n & \le & \norm{f-f^0}^2_n + 4 \lambda \norm{D_{-S}f}_1\\
&&+ 2 \norm{\hat{f}-f}_n \left(\sqrt{\frac{2x}{n}} + \sqrt{\frac{r_S}{n}} \right)\\
&&+ 2 \lambda (\norm{D_S(\hat{f}-f)}_1- \norm{D_{-S}(\hat{f}-f)}_1)\\
&&+ 2 \lambda \norm{\Omega_{-S} D_{-S}(\hat{f}-f)}_1/\gamma\\
&=& \norm{f-f^0}^2_n + 4 \lambda \norm{D_{-S}f}_1\\
&&+ 2 \norm{\hat{f}-f}_n \left(\sqrt{\frac{2x}{n}} + \sqrt{\frac{r_S}{n}} \right)\\
&&+ 2 \lambda (\norm{D_S(\hat{f}-f)}_1- \norm{W_{-S}D_{-S}(\hat{f}-f)}_1).
\end{eqnarray*}
If $\kappa(S,W)>0$ we have that
$$ \norm{D_S(\hat{f}-f)}_1- \norm{W_{-S}D_{-S}(\hat{f}-f)}_1 \le \frac{\sqrt{r_S}\norm{\hat{f}-f}_n}{\kappa(S,W)}$$
and thus
\begin{eqnarray*}
\norm{\hat{f}-f^0}^2_n + \norm{\hat{f}-f}^2_n& \le & \norm{f-f^0}^2_n + 4 \lambda \norm{D_{-S}f}_1\\
&&+ 2 \norm{\hat{f}-f}_n \left(\sqrt{\frac{2x}{n}} + \sqrt{\frac{r_S}{n}} + \frac{\lambda\sqrt{r_S}}{\kappa(S,W)} \right)\\
 & \le & \norm{f-f^0}^2_n + 4 \lambda \norm{D_{-S}f}_1\\
 &&+ \norm{\hat{f}-f}^2_n +   \left(\sqrt{\frac{2x}{n}} + \sqrt{\frac{r_S}{n}} + \frac{\lambda\sqrt{r_S}}{\kappa(S,W)} \right)^2,
\end{eqnarray*}
 where the last inequality follows by $2ab \le a^2+ b^2, a,b \in \R$.
 
 The term $\norm{\hat{f}-f}^2_n$ cancels out and we get the statement of the theorem.
\end{proof}

\begin{proof}[Proof of Theorem \ref{analysis-t02s03}]
By Lemma \ref{analysis-l01s02} we have the basic inequality.
%
By Lemma \ref{analysis-l02s02}, 
 we have that with probability at least $1-e^{-x}-e^{-t}$,
\begin{eqnarray*}
\frac{\epsilon'(\hat{f}-f)}{n} & \le & \lambda \norm{D_{-S}(\hat{f}-f)}_1 + \sigma \left(\sqrt{\frac{2x}{n}}+\sqrt{\frac{r_S}{n}} \right)\norm{\hat{f}-f}_n\\
& \le & \lambda  \norm{D_{-S}(\hat{f}-f)}_1 + \frac{1}{2}\sigma^2 \left(\sqrt{\frac{2x}{n}}+\sqrt{\frac{r_S}{n}} \right)^2+\frac{1}{2}\norm{\hat{f}-f}^2_n.\\
\end{eqnarray*}
We thus get that
\begin{equation*}
\norm{\hat{f}-f^0}^2_n  \le  \norm{f-f^0}^2_n + \frac{\sigma^2}{n} (\sqrt{2x}+\sqrt{r_S} )^2 + 2 \lambda (\norm{D_S f}_1 + \norm{D_{-S}f}_1- \norm{D_S \hat{f}}_1 ).
\end{equation*}
\end{proof}
\section{Proofs of Section \ref{analysis-s04}}\label{analysis-appD}

Define for $S \in \cals$
$$\hat{R}:= \max_{i \in -S} \frac{\abs{\e'd_i^+}}{\norm{\e}_n \norm{d_i^+}_n n}.$$

For $a>0, R>0$,  define the sets $ \RR:= \left\{\gamma \hat{R} \le R \right\}$,
\begin{align*}
\A&:= \left\{\frac{\norm{\Pi_{\N(D_{-S})}\e}^2_2}{\sigma^2}-r_S\in \left[-2\sqrt{ar_S},  + 2 \sqrt{ar_S}+2a \right] \right\}\\
&\cup \left\{\frac{\norm{A_{\N(D_{-S})}\e}^2_2}{\sigma^2}\ge n-r_S - 2 \sqrt{a(n-r_S)} \right\}
\end{align*}
and 
$$ \A':= \A\cap \left\{\norm{A_{\N(D_{-S})}\e}^2_2/ \sigma^2 \le n-r_S + 2 \sqrt{a(n-r_S)} + 2a \right\}.$$

Note that on $\A'$ we have that, by the Cauchy-Schwarz inequality,
\begin{equation*}
\nee_2^2 = \norm{\Pi_{\N(D_{-S})}\e}^2_2 +\norm{A_{\N(D_{-S})}\e}^2_2 \le {\sigma^2} (n + \sqrt{8an}+ 4a)\le n \sigma^2 (1+ \sqrt{4a/n})^2.
\end{equation*}

\begin{remark}\label{analysis-r01s02}
By Lemma \ref{l86vand16} (Lemma 1 in \cite{laur00}) we have that for $a>0$ both $ \pr(\A)\ge 1-3e^{-a}$ and $ \pr(\A') \ge 1-4e^{-a}$ hold true.

Moreover by Lemma \ref{prob3}   (Lemma 8.1 in \cite{vand16}) and using the union bound, we see that if we choose
$$ R \ge \gamma \sqrt{\frac{2 \log(2(n-r_S))+2t}{n-1}},\ t \in (0, {(n-1)}/{2}-\log(2(n-r_S)))$$
we have that $ \pr(\RR)\ge 1-e^{-t}$.
Thus, by such a choice of $R$ we get that
$$ \pr(\A\cap \RR)\ge 1-3e^{-a}-e^{-t} \text{ and } \pr(\A'\cap \RR)\ge 1-4e^{-a}-e^{-t}.$$
\end{remark}

\begin{remark}
 Motivated by a more simple exposition of the results, we chose the same parameter $a$ for the upper and lower bounds for both $\norm{\Pi_{\N(D_{-S})}\e}_n$ and $\norm{A_{\N(D_{-S})}\e}_n$. However one could of course choose four different parameters, say $a_i,\ i \in [4]$, for the four different bounds and obtain results holding with probability $1-e^{-t}-\sum_{i=1}^3 e^{a_i}$ resp. $1-e^{-t}-\sum_{i=1}^4 e^{a_i}$.
 \end{remark}

\subsection{Proving that the square root analysis estimator does not overfit}

\begin{proof}[Proof of Lemma \ref{analysis-l03s02}]
Assumption \ref{analysis-a01s02} expresses a particular choice of the constant $c$ in Proposition \ref{l01s11} below.
For $\eta\in (0,1)$ we have that $\eta/2 \le \eta /(1+ \eta)$ and thus the choice of $c$ in Assumption \ref{analysis-a01s02} satisfies the upper bound given by Proposition \ref{l01s11} (see below), which then holds, since all of its assumtpions are satisfied and we consider the sets $\A \cap \RR$.

The choice of $c$ implies that $q=\eta/2$ and that $c\le \eta /2$. Thus the claim follows.

 By Remark \ref{analysis-r01s02}, if we choose $a >0$ and $R \ge\gamma \sqrt{\frac{2 \log(2(n-r_S))+2t}{n-1}},\ t \in (0, {(n-1)}/{2}-\log(2(n-r_S)))$, then $\pr(\A \cap \R) \ge 1-3e^{-a}-e^{-t}$.

\end{proof}

\begin{proposition}[The square root analysis estimator does not overfit]\label{l01s11}

Assume for some $a>0$ that $n>8a$ and that for some $R>0, \eta\in (0,1)$

$$ \lambda_0\ge \frac{1}{1-\eta} R \text{ and } \norm{Df^0}_1\le c \sigma \sqrt{1-\sqrt{8a/n}}/ \lambda_0,$$
where 
$$c < \sqrt{\left(\frac{\eta}{1+\eta}- \frac{\sqrt{r_S}+\sqrt{2a}}{\sqrt{n-\sqrt{8an}}} \right)^2+4}-2.$$
We assume that $S \in \mathcal{S}$ is s.t. 
$$ \frac{\eta}{1+\eta}> \frac{\sqrt{r_S}+\sqrt{2a}}{\sqrt{n-\sqrt{8an}}}.$$

Let $$q:= 2 \frac{\sqrt{r_S}+\sqrt{2a}}{\sqrt{n-\sqrt{8an}}}+ (c+2)^2-4.$$

Let $a >0$. Choose $R\ge\gamma \sqrt{\frac{2 \log(2(n-r_S))+2t}{n-1}},\ t \in (0, {(n-1)}/{2}-\log(2(n-r_S)))$. Then with probability at least $1-3e^{-a}-e^{-t}$ it holds that
$$ (1+c)\norm{\e}_n \ge \norm{\he}_n \ge (1- \eta q /(\eta-q))\norm{\e}_n.$$
\end{proposition}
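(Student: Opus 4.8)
The plan is to prove the two inequalities separately, both on the event $\A\cap\RR$, which by Remark \ref{analysis-r01s02} has probability at least $1-3e^{-a}-e^{-t}$ for the stated choice of $R$. The one ingredient used in both parts is a lower bound on the scale of the noise: on $\A$ the $\chi^2$ tails defining it, together with $\sqrt{r_S}+\sqrt{n-r_S}\le\sqrt{2n}$, give $\nee_2^2\ge\sigma^2(n-\sqrt{8an})$, i.e.
$$\nee_n\ge\sigma\sqrt{1-\sqrt{8a/n}}>0,$$
positivity using $n>8a$; the same tails give $\norm{\Pi_{\N(D_{-S})}\e}_n\le\tfrac{\sigma}{\sqrt n}(\sqrt{r_S}+\sqrt{2a})$, so that on $\A$
$$\frac{\norm{\Pi_{\N(D_{-S})}\e}_n}{\nee_n}\le\frac{\sqrt{r_S}+\sqrt{2a}}{\sqrt{n-\sqrt{8an}}}=:b,$$
and note that $q=2b+(c+2)^2-4$.

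For the upper bound I would simply plug the competitor $f=f^0$ into the definition of $\hat f_{\sqrt{}}$: optimality gives $\neh_n+\lambda_0\norm{D\hat f}_1\le\nee_n+\lambda_0\norm{Df^0}_1$, hence $\neh_n\le\nee_n+\lambda_0\norm{Df^0}_1$, and the hypothesis $\lambda_0\norm{Df^0}_1\le c\,\sigma\sqrt{1-\sqrt{8a/n}}\le c\,\nee_n$ (last step by the noise-scale bound above) yields $\neh_n\le(1+c)\nee_n$.

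The lower bound is the real content. Setting $\Delta:=\hat f_{\sqrt{}}-f^0$ and using $\he=\e-\Delta$, I would start from the identity $\neh_n^2=\nee_n^2-\tfrac2n\langle\e,\Delta\rangle+\norm{\Delta}_n^2$ and bound $\tfrac1n\langle\e,\Delta\rangle$ from above exactly as in the proof of Lemma \ref{analysis-l02s02}, splitting $\Delta=\Pi_{\N(D_{-S})}\Delta+D_{-S}^{+}D_{-S}\Delta$: the projected part is at most $\norm{\Pi_{\N(D_{-S})}\e}_n\norm{\Delta}_n$ by Cauchy--Schwarz, while on $\RR$ the definitions of $\omega$, of $\gamma$ and of $\RR$ give $\tfrac1n\langle\e,D_{-S}^{+}D_{-S}\Delta\rangle\le R\,\nee_n\norm{D_{-S}\Delta}_1$ --- the only change from Lemma \ref{analysis-l02s02} being that $\nee_n$ replaces $\sigma$, since $\RR$ normalises by $\nee_n$. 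Then I would use $\norm{D_{-S}\Delta}_1\le\norm{D\hat f}_1+\norm{Df^0}_1$ together with the penalty budget obtained by rearranging optimality, $\lambda_0\norm{D\hat f}_1\le\nee_n-\neh_n+\lambda_0\norm{Df^0}_1$, and the hypotheses $R\le(1-\eta)\lambda_0$ and $\lambda_0\norm{Df^0}_1\le c\,\nee_n$; this bounds $R\,\nee_n\norm{D_{-S}\Delta}_1$ by $(1-\eta)\nee_n(\nee_n-\neh_n)+2(1-\eta)c\,\nee_n^2$. Finally $\norm{\Delta}_n\le\nee_n+\neh_n$ and $\norm{\Pi_{\N(D_{-S})}\e}_n\le b\,\nee_n$ on $\A$. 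After substitution every term is a multiple of $\nee_n^2$, so dividing by $\nee_n^2$ leaves a scalar inequality in $\rho:=\neh_n/\nee_n$ and $b$; solving it, with the hypotheses on $c$ and $S$ used to keep the algebra consistent, gives $\rho\ge1-\eta q/(\eta-q)$.

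One point needs care: the scalar inequality a priori only asserts that $\rho$ avoids an interval, so I must exclude the possibility that $\neh_n$ is anomalously small (which is exactly the ``does not overfit'' content). I would settle this with the usual convexity argument: for $f_\tau:=\tau f^0+(1-\tau)\hat f_{\sqrt{}}$, convexity of the objective and optimality of $\hat f_{\sqrt{}}$ give $\norm{Y-f_\tau}_n+\lambda_0\norm{Df_\tau}_1\le\nee_n+\lambda_0\norm{Df^0}_1$ for all $\tau\in[0,1]$, while $\tau\mapsto\norm{Y-f_\tau}_n$ is continuous and runs from $\neh_n$ to $\nee_n$; re-running the estimate above at $f_\tau$ forces the scalar inequality along the whole segment, which pins $\rho$ on the correct side. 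Combining the two bounds on $\A\cap\RR$ gives the statement. The \textbf{main obstacle} is precisely this lower bound: keeping track of which terms carry a factor $\nee_n$ versus $\sigma$, feeding the hypothesis on $c$ in at the exact place that legitimises the branch selection, and checking that the scalar inequality collapses to the stated constant $q=2b+(c+2)^2-4$ and the form $1-\eta q/(\eta-q)$.
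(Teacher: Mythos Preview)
Your upper bound is identical to the paper's. The divergence is in the lower bound, and there is a genuine gap.

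The paper does \emph{not} work directly at $\hat f_{\sqrt{}}$ and then use continuity over all $\tau$ to pick a branch. It fixes, up front, a \emph{single} interpolated point $\hft:=t\hat f_{\sqrt{}}+(1-t)f^0$ with the particular choice
\[
t=\frac{\eta\,\nee_n}{\eta\,\nee_n+\norm{\hat f_{\sqrt{}}-f^0}_n},
\]
which forces $\norm{\hft-f^0}_n\le\eta\,\nee_n$ and hence $\net_n\ge(1-\eta)\nee_n$. The paper then \emph{squares} the convexity inequality $\net_n+\lambda_0\norm{D\hft}_1\le\nee_n+\lambda_0\norm{Df^0}_1$; the cross term $2\lambda_0\net_n\norm{D\hft}_1\ge 2\lambda_0(1-\eta)\nee_n\norm{D\hft}_1\ge 2R\,\nee_n\norm{D\hft}_1$ (using $\lambda_0(1-\eta)\ge R$ together with the just-secured $\net_n\ge(1-\eta)\nee_n$) absorbs the antiprojected part of the noise bound outright. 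After completing the square one lands on $\norm{\hft-f^0}_n\le q\,\nee_n$, and the stated constant $1-\eta q/(\eta-q)$ then arises by \emph{inverting} the relation $\norm{\hft-f^0}_n=\dfrac{\eta\,\nee_n\,\norm{\hat f_{\sqrt{}}-f^0}_n}{\eta\,\nee_n+\norm{\hat f_{\sqrt{}}-f^0}_n}$ and using $\neh_n\ge\nee_n-\norm{\hat f_{\sqrt{}}-f^0}_n$.

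Your route is different: you keep the convexity inequality linear and feed the budget $\lambda_0\norm{D\hat f}_1\le\nee_n-\neh_n+\lambda_0\norm{Df^0}_1$ back into the noise bound. This produces a joint constraint of the form
\[
1-\rho^2+\delta^2\ \le\ 2(1-\eta)(1-\rho)+4(1-\eta)c+2b\,\delta
\]
in the \emph{two} unknowns $\delta=\norm{\Delta}_n/\nee_n$ and $\rho=\neh_n/\nee_n$. You can extract \emph{some} lower bound on $\rho$ from this via $\delta\ge|1-\rho|$, and your continuity argument over $f_\tau$ is a valid way to select the branch. But the constant you get is not $1-\eta q/(\eta-q)$: that expression is an artefact of the paper's specific choice of $t$ and the subsequent inversion, not the root of your quadratic. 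So the step ``solving it\ldots\ gives $\rho\ge1-\eta q/(\eta-q)$'' is exactly where your argument fails to deliver the \emph{stated} proposition; your ingredients prove a qualitatively similar but quantitatively different inequality. If you want the constant as written, you need the paper's device: square at a single well-chosen interpolant so that the penalty absorbs the noise, rather than recycling the linear budget.
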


\begin{proof}[Proof of Proposition \ref{l01s11}, based on the proof of Lemma 3.1 by \cite{vand16}]

On the set $\A$ we have that, the Cauchy-Schwarz inequality,
$$ \nee_2^2/\sigma^2 \ge n-2\sqrt{a}(\sqrt{r_S}+ \sqrt{n-r_S}) \ge n-\sqrt{8an}.$$
Thus,
$$ \nee_n\ge \sigma \sqrt{1-\sqrt{8a/n}} \text{ and by assumption we have that } \norm{Df^0}_1 \le c\nee_n /\lambda_0.$$

We now show an upper and a lower bound for $\neh_n$. 

\medskip
\textbf{\underline{Upper bound:}}\\

Since the estimator $\hf_{\sqrt{}}$ minimizes the objective function we have that
$$ \norm{Y-\hf_{\sqrt{}}}_n + \lambda_0 \norm{D\hf_{\sqrt{}}}_1 \le \norm{Y-f^0}_n + \lambda_0 \norm{Df^0}_1.$$

It follows that
$$\neh_n \le \nee_n + \lambda_0 \norm{Df^0}_1 \le (1+c) \nee_n.$$

\medskip
\textbf{\underline{Lower bound:}}\\

Note that, by the triangle inequality, we have that
$$ \neh_n = \norm{\e-(\hf_{\sqrt{}}-f^0)}_n \ge \nee_n - \norm{\hf_{\sqrt{}}-f^0}_n.$$

Thus the lemma follows if we can prove a bound of the type $ \norm{\hf_{\sqrt{}}-f^0}_n \le \text{const.}\nee_n$, with leading constant in $(0,1)$. We are not allowed to use the KKT conditions. Instead we use the convexity of the loss function and of the penalty.

Define for $t \in (0,1)$ the convex combination $ \hft:= t \hf_{\sqrt{}} + (1-t) f^0$ and its residuals

$$ \het:=Y-\hft= \e-(\hft-f^0)=t\he+(1-t)\e.$$

Choose
$$ t= \frac{\eta \nee_n}{\eta \nee_n+ \norm{\hf_{\sqrt{}}-f^0}_n}.$$

Then
$$ \norm{\hft-f^0}_n= t \norm{\hf_{\sqrt{}}-f^0}_n=\frac{\eta \nee_n \norm{\hf_{\sqrt{}}-f^0}_n}{\eta \nee_n+ \norm{\hf_{\sqrt{}}-f^0}_n}\le \eta \nee_n.$$

We thus get that
$$ \net_n \ge \nee_n - \norm{\hft-f^0}_n \ge (1-\eta) \nee_n.$$

By the convexity of the loss and of the penalty and by the fact that $\hf_{\sqrt{}}$ is a minimizer of the objective function it follows that
\begin{align*}
\net_n+ \lambda_0 \norm{D\hft}_1  &\le  t(\neh_n+ \lambda_0 \norm{D\hf_{\sqrt{}}}_1) + (1-t)(\nee_n + \lambda_0 \norm{Df^0}_1)\\
&\le \nee_n + \lambda_0 \norm{Df^0}_1.
\end{align*}

By squaring the inequality we get that
\begin{equation*}
\net_n^2+ 2 \lambda_0 \net_n \norm{D\hft}_1 + \lambda_0^2 \norm{D\hft}_1^2  \le \nee^2_n + 2 \lambda_0 \nee_n \norm{Df^0}_1 + \lambda_0^2 \norm{Df^0}_1^2.
\end{equation*}
We have that
\begin{equation*}
\net_n^2 = \norm{\e-(\hft-f^0)}^2_n = \nee^2_n - \frac{2\e'(\hft-f^0)}{n} + \norm{\hft-f^0}^2_n.
\end{equation*}

By combining the squared inequality with the lower bound for $\net_n$ and the expression for $\net_n^2$ we get that
\begin{equation*}
\norm{\hft-f^0}^2_n \le 2 \lambda_0 \nee_n \norm{Df^0}_1- 2 \lambda_0 (1-\eta)\nee_n \norm{D\hft}_1  + \lambda_0^2 \norm{Df^0}_1^2  + \frac{2\e'(\hft-f^0)}{n}.
\end{equation*}
On $\RR$, for an $S$ satisfying the assumptions of the lemma, we have that
\begin{eqnarray*}
\frac{\e'(\hft-f^0)}{n} & \le & \frac{\rho}{\gamma} R \nee_n (\norm{D_{-S}f}_1+ \norm{D_{-S}f^0}_1) + \norm{\Pi_{\N(D_{-S})}\e}_n \norm{\hft-f^0}_n\\
& \le &  R \nee_n (\norm{Df}_1+ \norm{Df^0}_1) + \norm{\Pi_{\N(D_{-S})}\e}_n \norm{\hft-f^0}_n.\\
\end{eqnarray*}

Thus
\begin{eqnarray*}
 &&\norm{\hft-f^0}^2_n - 2 \norm{\Pi_{\N(D_{-S})}\e}_n \norm{\hft-f^0}_n\\
&\le & 2 (\lambda_0+R) \nee_n \norm{Df^0}_1 - 2 (\lambda(1-\eta)-R)\nee_n \norm{D\hft}_1 +  \lambda_0^2 \norm{Df^0}_1^2\\
& \le & 4 \lambda_0 \nee_n \norm{Df^0}_1 + \lambda_0^2 \norm{Df^0}_1^2 = \nee_n^2 \left((\lambda_0\norm{Df^0}_1/ \nee_n + 2)^2-4 \right)\\
&\le & \nee_n^2 \underbrace{\left((c+2)^2-4 \right)}_{=: c'}.
\end{eqnarray*}

Moreover we have that
\begin{align*}
\norm{\hft-f^0}^2_n - 2 \norm{\Pi_{\N(D_{-S})}\e}_n\norm{\hft-f^0}_n = &(\norm{\hft-f^0}_n -  \norm{\Pi_{\N(D_{-S})}\e}_n)^2\\
&-\norm{\Pi_{\N(D_{-S})}\e}_n^2.
\end{align*}
Thus we obtain that
$$\left(\norm{\hft-f^0}_n - \norm{\Pi_{\N(D_{-S})}\e}_n\right)^2 \le\norm{\Pi_{\N(D_{-S})}\e}_n^2 + c' \nee^2_n$$
and
\begin{align*}
\norm{\hft-f^0}_n  &\le \norm{\Pi_{\N(D_{-S})}\e}_n + \sqrt{\norm{\Pi_{\N(D_{-S})}\e}_n^2 + c' \nee^2_n}\\
&\le 2\norm{\Pi_{\N(D_{-S})}\e}_n + \sqrt{c'} \nee_n.
\end{align*}

Note that
$$ \nee^2_2 = \norm{\Pi_{\N(D_{-S})}\e}^2_2 + \norm{A_{\N(D_{-S})}\e}^2_2,$$
By using the spectral decomposition, $\Pi_{\N(D_{-S})}\R^{n \times n} $ can be written as $PP'$, where $P \in \R^{n \times r_S}$ is s.t. $P'P=\text{I}_{r_S}$.
Moreover $A_{\N(D_{-S})}\in \R^{n \times n}$ can be written as $QQ'$, where $Q\in \R^{n \times (n-r_S)}$ is s.t. $Q'Q= \text{I}_{n-r_S}$ and $Q'P=0$.

Let $u:= P'\e\in \R^{r_S}$ and $v:=Q'\e\in \R^{n-r_S}$. We have that
$ u \sim \N_{r_S} (0, \sigma^2 \text{I}_{r_S})$,
$ v \sim \N_{n-r_S} (0, \sigma^2 \text{I}_{n-r_S})$ and $u$ and $v$ are independent.
We have that $\norm{\Pi_{\N(D_{-S})}\e}^2_2=\norm{u}^2_2$ and that $ \norm{A_{\N(D_{-S})}\e}^2_2=\norm{v}^2_2$.
It follows that
$$ \nee^2_2/\sigma^2= \underbrace{\norm{u}^2_2/\sigma^2}_{\sim \chi^2_{r_S}} + \underbrace{\norm{v}^2_2/\sigma^2}_{\sim \chi^2_{n-r_S}}$$
and thus the two terms are independent and can be handled separately.

On $\A$ we have that
$$\frac{\norm{\Pi_{\N(D_{-S})}\e}^2_2}{\nee^2_2}= \frac{\norm{u}^2_2}{\norm{u}^2_2+\norm{v}^2_2}\le \frac{(\sqrt{r_S}+ \sqrt{2a})^2}{n-\sqrt{8an}}.$$
Therefore
$$ \norm{\Pi_{\N(D_{-S})}\e}_n \le \frac{\sqrt{r_S}+ \sqrt{2a}}{\sqrt{n-\sqrt{8an}}} \nee_n =: p \nee_n \asymp \sqrt{\frac{r_S}{n}} \nee_n.$$
It follows that
$$ \norm{\hft-f^0}_n \le (2p+ \sqrt{c'})\nee_n =: q \nee_n .$$
By expressing $\norm{\hft-f^0}_n$  more explicitly we get that
$$ \frac{\eta\norm{\hf_{\sqrt{}}-f^0}_n}{\eta\nee_n +\norm{\hf_{\sqrt{}}-f^0}_n} \le q \text{ and thus } \norm{\hf_{\sqrt{}}-f^0}_n \le \frac{q \eta}{\eta- q} \nee_n.$$
We conclude that
$$ \neh_n \ge \left(1- \frac{\eta q}{\eta-q} \right)\nee_n.$$

The last step is to find out how to choose $c$ s.t. $q\eta / (\eta-q)<1$.
We get that $q< \eta/(1+\eta)$, hence
$$ c'< \left(\frac{\eta}{1+\eta}-p \right)^2 \text{ and thus } c< \sqrt{\left(\frac{\eta}{1+\eta}-p \right)^2+4}-2.$$
Note that we also get the assumption $p< \eta /(1+\eta)$, which results in the assumption
$$\frac{\eta}{1+\eta}> \frac{\sqrt{r_S}+\sqrt{2a}}{\sqrt{n-\sqrt{8an}}}. $$
Note that the result holds on $\A \cap \R$, which by Remark \ref{analysis-r01s02} has probability at least $1-3e^{-a}-e^{-t}$ for $a>0$ and $R \ge\gamma \sqrt{\frac{2 \log(2(n-r_S))+2t}{n-1}},\ t \in (0, {(n-1)}/{2}-\log(2(n-r_S)))$.
\end{proof}

\subsection{Basic inequality}
\begin{lemma}\label{analysis-l04s02}
Let $S \in \mathcal{S}$ be an arbitrary active set satisfying Assumption \ref{analysis-a01s02} and let $a >0$. For $\eta \in (0,1)$, choose $\lambda_0 \ge\frac{1}{1-\eta}\gamma \sqrt{\frac{2 \log(2(n-r_S))+2t}{n-1}},\ t \in (0, {(n-1)}/{2}-\log(2(n-r_S)))$. Under Assumption \ref{analysis-a01s02}, it holds that $\forall f \in \R^n$, with probability at least $1-3e^{-a}-e^{-t}$, 
\begin{equation*}
\norm{\hat{f}_{\sqrt{}}-f^0}^2_n+\norm{\hat{f}_{\sqrt{}}-f}^2_n  \le  \norm{f-f^0}^2_n + \frac{2\epsilon'(\hat{f}_{\sqrt{}}-f)}{n} + 2\lambda_0 \neh_n ( \norm{Df}_1- \norm{D\hat{f}_{\sqrt{}}}_1).
\end{equation*}
\end{lemma}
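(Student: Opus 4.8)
The plan is to reproduce the argument in the proof of Lemma~\ref{analysis-l01s02} with the tuning parameter $\lambda$ replaced by $\lambda_0\neh_n$, after first ruling out overfitting so that the KKT conditions become available. To that end, set $R:=\gamma\sqrt{(2\log(2(n-r_S))+2t)/(n-1)}$; then the stated choice of $\lambda_0$ reads $\lambda_0\ge\frac{1}{1-\eta}R$, so that, together with Assumption~\ref{analysis-a01s02}, all the hypotheses of Lemma~\ref{analysis-l03s02} are in force. Hence, on the event $\A\cap\RR$ of Remark~\ref{analysis-r01s02}, which has probability at least $1-3e^{-a}-e^{-t}$, we have $\left|\frac{\neh_n}{\nee_n}-1\right|\le\eta$, and since $\nee_n>0$ on $\A$ this forces $\neh_n\ge(1-\eta)\nee_n>0$. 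Consequently the objective $f\mapsto\norm{Y-f}_n+\lambda_0\norm{Df}_1$ is differentiable at $\hat{f}_{\sqrt{}}$.

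Next I would write down the KKT conditions. Differentiating the loss and applying the chain rule to the subdifferential of $f\mapsto\norm{Df}_1$ gives
\begin{equation*}
\frac{Y-\hat{f}_{\sqrt{}}}{n}=\lambda_0\neh_n\,D'\,\partial\norm{D\hat{f}_{\sqrt{}}}_1 ,
\end{equation*}
which is exactly the KKT identity appearing in the proof of Lemma~\ref{analysis-l01s02} with $\lambda$ replaced by $\lambda_0\neh_n$. Taking the inner product with $\hat{f}_{\sqrt{}}$ gives $\hat{f}_{\sqrt{}}'(Y-\hat{f}_{\sqrt{}})/n=\lambda_0\neh_n\norm{D\hat{f}_{\sqrt{}}}_1$, and taking it with an arbitrary $f\in\R^n$, together with the dual-norm inequality and the bound $\norm{\partial\norm{D\hat{f}_{\sqrt{}}}_1}_\infty\le 1$, gives $f'(Y-\hat{f}_{\sqrt{}})/n\le\lambda_0\neh_n\norm{Df}_1$.

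Subtracting these two relations, substituting $Y=f^0+\epsilon$ and rearranging yields
\begin{equation*}
\frac{(f-\hat{f}_{\sqrt{}})'(f^0-\hat{f}_{\sqrt{}})}{n}\le\frac{\epsilon'(\hat{f}_{\sqrt{}}-f)}{n}+\lambda_0\neh_n\bigl(\norm{Df}_1-\norm{D\hat{f}_{\sqrt{}}}_1\bigr),
\end{equation*}
and multiplying by $2$ and polarizing the left-hand side via $2(f-\hat{f}_{\sqrt{}})'(f^0-\hat{f}_{\sqrt{}})/n=\norm{\hat{f}_{\sqrt{}}-f^0}_n^2+\norm{\hat{f}_{\sqrt{}}-f}_n^2-\norm{f-f^0}_n^2$ gives the claimed basic inequality, valid on $\A\cap\RR$. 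The genuinely new ingredient compared with the plain analysis estimator is the first step: without the anti-overfitting bound of Lemma~\ref{analysis-l03s02} the factor $\neh_n$ could vanish and the objective would fail to be differentiable at $\hat{f}_{\sqrt{}}$, so securing $\neh_n>0$ — which is also where the probability loss $3e^{-a}$ in place of a single $e^{-x}$ comes from — is the main obstacle; once that is done, the derivation of the KKT identity and of the basic inequality is a line-by-line copy of the proof of Lemma~\ref{analysis-l01s02}.
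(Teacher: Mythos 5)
Your proposal is correct and follows essentially the same route as the paper's own proof: on the event $\A\cap\RR$ (probability at least $1-3e^{-a}-e^{-t}$ by Remark \ref{analysis-r01s02}) the no-overfitting result ensures $\neh_n>0$ so the KKT conditions $\frac{Y-\hat f_{\sqrt{}}}{n}=\lambda_0\neh_n D'\partial\norm{D\hat f_{\sqrt{}}}_1$ hold, and the basic inequality is then obtained exactly as in Lemma \ref{analysis-l01s02} with $\lambda$ replaced by $\lambda_0\neh_n$. You merely make explicit the steps the paper cites tersely; the only nitpick is that it is the loss term (not the whole objective) that becomes differentiable once $\neh_n>0$, which is what your subsequent subdifferential argument in fact uses.
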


\begin{proof}[Proof of Lemma \ref{analysis-l04s02}]
Under Assumption \ref{analysis-a01s02}, on $\A \cap \RR$ the KKT conditions hold
$$ \frac{Y-\hf_{\sqrt{}}}{n}= \lambda_0 \neh_n D' \partial \norm{D\hf_{\sqrt{}}}_1.$$
We then obtain the basic inequality as in Lemma \ref{analysis-l01s02} (cf. also Lemma 2 in \cite{stuc17}).
Note that by Remark \ref{analysis-r01s02}, the choice of $\lambda_0$ implies that $\pr(\A \cap \R) \ge 1-3e^{-a}-e^{-t}$.
\end{proof}

\subsection{Bound on the increments of the empirical process}

\begin{lemma}\label{analysis-l05s02}
Let $S \in \mathcal{S}$ be an arbitrary active set satisfying Assumption \ref{analysis-a01s02} and let $a >0$. For $\eta \in (0,1)$, choose $\lambda_0 \ge\frac{1}{1-\eta}\gamma \sqrt{\frac{2 \log(2(n-r_S))+2t}{n-1}},\ t \in (0, {(n-1)}/{2}-\log(2(n-r_S)))$.
Under Assumption \ref{analysis-a01s02} we have that $\forall f \in \R^n$, with probability at least $1-3e^{-a}-e^{-t}$  
$$ \frac{\e'f}{n}\le \lambda_0 \neh_n \norm{\Omega_{-S}D_{-S}f}_1/\gamma + \sqrt{\frac{\sigma^2}{n}(r_S + 2 \sqrt{ar_S}+2a)} \norm{f}_n.$$
\end{lemma}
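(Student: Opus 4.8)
The plan is to follow the same projection decomposition used in the proof of Lemma~\ref{analysis-l02s02}, but to replace the fixed sub-Gaussian threshold (there obtained from Lemma~\ref{l175vand16}) by the data-dependent threshold recorded in the event $\RR$, and then to convert it into a multiple of $\neh_n$ via the no-overfitting corollary. First I would write
$$\frac{\e'f}{n}=\frac{\e'\Pi_{\N^\perp(D_{-S})}f}{n}+\frac{\e'\Pi_{\N(D_{-S})}f}{n}$$
and treat the two summands separately. For the first, using $\Pi_{\N^\perp(D_{-S})}=D_{-S}^{+}D_{-S}$ and the columns $d_i^{+}$ of $D_{-S}^{+}$, I would expand $\e'\Pi_{\N^\perp(D_{-S})}f=\sum_i(\e'd_i^{+})(D_{-S}f)_i$, bound each coefficient through the definition of $\hat R$ so that $|\e'd_i^{+}|/n\le\hat R\,\nee_n\norm{d_i^{+}}_n$, and recall from Definition~\ref{analysis-d01s01} that $\omega_i=\norm{d^{+}_{i^{*}(i)}}_n$; this gives $\e'\Pi_{\N^\perp(D_{-S})}f/n\le\hat R\,\nee_n\,\norm{\Omega_{-S}D_{-S}f}_1$.

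Next I would restrict to $\RR=\{\gamma\hat R\le R\}$, on which $\hat R\le R/\gamma$, and invoke Corollary~\ref{analysis-c01s02}: on $\A\cap\RR$ it gives $R\,\nee_n\le\lambda_0\neh_n$, so the first summand is bounded by $(\lambda_0\neh_n/\gamma)\norm{\Omega_{-S}D_{-S}f}_1$, which is exactly the first term of the claim. This is the only place where Assumption~\ref{analysis-a01s02} and the choice of $\lambda_0$ are used: they let us take $R=(1-\eta)\lambda_0\ge\gamma\sqrt{(2\log(2(n-r_S))+2t)/(n-1)}$, which by Remark~\ref{analysis-r01s02} makes $\RR$, and together with $\A$ the event $\A\cap\RR$, of probability at least $1-3e^{-a}-e^{-t}$.

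For the second summand, Cauchy--Schwarz gives $\e'\Pi_{\N(D_{-S})}f/n\le\norm{\Pi_{\N(D_{-S})}\e}_n\norm{f}_n$, and since $\dim\N(D_{-S})=r_S$ we have $\norm{\Pi_{\N(D_{-S})}\e}_2^2/\sigma^2\sim\chi^2_{r_S}$, so Lemma~\ref{l86vand16} shows that on $\A$ one has $\norm{\Pi_{\N(D_{-S})}\e}_n\le\sqrt{(\sigma^2/n)(r_S+2\sqrt{ar_S}+2a)}$. Adding the two bounds yields the inequality of the lemma on $\A\cap\RR$, with the probability statement supplied by Remark~\ref{analysis-r01s02}.

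The step I expect to require the most care is the coupling $R\,\nee_n\le\lambda_0\neh_n$: it is what allows the bound to be stated in terms of $\neh_n$ --- the quantity produced by the square-root KKT conditions and by the basic inequality of Lemma~\ref{analysis-l04s02} --- rather than in terms of $\nee_n$, and it is legitimate only because Lemma~\ref{analysis-l03s02} and Corollary~\ref{analysis-c01s02} guarantee that the square-root analysis estimator does not overfit on $\A\cap\RR$. Everything else is the same bookkeeping as in the proof of Lemma~\ref{analysis-l02s02}, with the role of $\lambda$ there played by $\lambda_0\neh_n$ here.
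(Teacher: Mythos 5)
Your proposal is correct and follows essentially the same route as the paper's own proof: the same projection/antiprojection decomposition, the bound of the antiprojection term via $\hat R$ on $\RR$ and the coupling $R\nee_n\le\lambda_0\neh_n$ from Corollary \ref{analysis-c01s02}, the Cauchy--Schwarz plus $\chi^2_{r_S}$ bound on $\A$ for the projected term, and the probability bookkeeping from Remark \ref{analysis-r01s02}. No gaps to report.
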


\begin{proof}[Proof of Lemma \ref{analysis-l05s02}]
On $\RR$, by using the decomposition in antiprojection and projection onto the nullspace of $D_{-S}$ and by applying the dual norm inequality to the second term we have that
\begin{align*}
\frac{\e'f}{n} &\le \frac{\e'D_{-S}^+ D_{-S}f}{n}+ \norm{\Pi_{\N(D_{-S})}\e}_n \norm{f}_n\\
&\le  R \nee_n \norm{\Omega_{-S}D_{-S}f}_1/\gamma +  \sqrt{\frac{\sigma^2}{n}(r_S + 2 \sqrt{ar_s}+2a)} \norm{f}_n.
\end{align*}
Moreover, on $\A \cap \RR$, under Assumption \ref{analysis-a01s02}, by Corollary \ref{analysis-c01s02} we have that $R\nee_n \le \lambda_0 \neh_n$ and thus the claim follows.
Note that the choice of $\lambda_0$ implies, by Remark \ref{analysis-r01s02}, that $\pr(\A \cap \R) \ge 1-3e^{-a}-e^{-t}$.
\end{proof}

\subsection{Proof of the oracle inequalities}

\begin{proof}[Proof of Theorem \ref{analysis-t01s04}]
We work under Assumption \ref{analysis-a01s02} on $\A'\cap \RR$.
By combining Lemma \ref{analysis-l04s02} and Lemma \ref{analysis-l05s02}, we get that, in complete analogy to the proof of Theorem \ref{analysis-t01s03},
\begin{equation*}
\norm{\hat{f}_{\sqrt{}}-f^0}^2_n \le \norm{f-f^0}^2_n + 4 \lambda_0 \neh_n \norm{D_{-S}f}_1 + \left(   \sigma \sqrt{\frac{2a}{n}}+ \sigma \sqrt{\frac{r_S}{n}}+  \frac{\lambda_0 \neh_n \sqrt{r_S}}{\kappa(S,W)}\right)^2.
\end{equation*}
Moreover, by Corollary \ref{analysis-c01s02}, we have that on $\A'$
$$ \neh_n \le (1+\eta)\nee_n \le (1+\eta) (1+\sqrt{4a/n})\sigma.$$
Thus we get that
\begin{eqnarray*}
\norm{\hat{f}_{\sqrt{}}-f^0}^2_n & \le & \norm{f-f^0}^2_n + 4(1+\eta) (1+\sqrt{4a/n})\sigma \lambda_0  \norm{D_{-S}f}_1\\
&+& \sigma^2\left(    \sqrt{\frac{2a}{n}}+  \sqrt{\frac{r_S}{n}}+  \frac{(1+\eta) (1+\sqrt{4a/n}) \lambda_0  \sqrt{r_S}}{\kappa(S,W)}\right)^2.
\end{eqnarray*}
Since Assumption \ref{analysis-a01s02} implies that $\eta<1$ and $n>8a$ we get that
$(1+\eta) (1+\sqrt{4a/n})\le 4$ and 
\begin{equation*}
\norm{\hat{f}_{\sqrt{}}-f^0}^2_n \le \norm{f-f^0}^2_n + 16\sigma \lambda_0 \norm{D_{-S}f}_1 + \sigma^2\left(    \sqrt{\frac{2a}{n}}+  \sqrt{\frac{r_S}{n}}+  \frac{4 \lambda_0  \sqrt{r_S}}{\kappa(S,W)}\right)^2.
\end{equation*}
By Remark \ref{analysis-r01s02} and the choice of $\lambda_0$ in the statement of the theorem, we have that $\pr(\A'\cap \R)\ge 1-4e^{-a}-e^{-t}$.
\end{proof}

\begin{proof}[Proof of Theorem \ref{analysis-t02s04}]
We work under Assumption \ref{analysis-a01s02} on $\A'\cap \RR$.
By Lemma \ref{analysis-l04s02} and Lemma \ref{analysis-l05s02} we get that, in analogy with the proof of Theorem \ref{analysis-t02s03},
\begin{equation*}
\norm{\hat{f}_{\sqrt{}}-f^0}^2_n + 2 \lambda_0 \neh_n \norm{D_S \hat{f}_{\sqrt{}}}_1 \le \norm{f-f^0}^2_n + \frac{\sigma^2}{n} \left(\sqrt{2x}+\sqrt{r_S} \right)^2+ 4 \lambda_0 \neh_n \norm{D f}_1.
\end{equation*}
By Corollary \ref{analysis-c01s02} we have that
$$2\nee_n \ge (1+\eta)\nee_n \ge \neh_n \ge (1-\eta) \nee_n.$$
Moreover on $\A'$
$$ 2\sigma\ge \sigma (1+ \sqrt{4a/n})\ge \nee_n \ge \sigma \sqrt{1-\sqrt{8a/n}}.$$
We thus get that
\begin{align*}
 &\norm{\hat{f}_{\sqrt{}}-f^0}^2_n + 2(1-\eta)\sqrt{1-\sqrt{\frac{8a}{n}}}  \sigma \lambda_0   \norm{D_S \hat{f}_{\sqrt{}}}_1 \\
 &\le  \norm{f-f^0}^2_n + \frac{\sigma^2}{n} \left(\sqrt{2a}+\sqrt{r_S} \right)^2 + 16 \sigma \lambda_0 \norm{D f}_1.
\end{align*}
By Remark \ref{analysis-r01s02} and the choice of $\lambda_0$ in the statement of the theorem, we have that $\pr(\A'\cap \R)\ge 1-4e^{-a}-e^{-t}$.
\end{proof}
\section{Proofs of Section \ref{analysis-s05}}\label{analysis-appE}

\begin{proof}[Proof of Lemma \ref{analysis-l01s05}]
Notice that for a cycle graph, all elements of $\mathcal{S}\setminus \emptyset$ have at least $s=2$ (cf. Remark \ref{analysis-r01s05}). Thus under the assumption $S \not=\emptyset$, bounding $\gamma$ for the cycle graph reduces to bounding $\gamma$ for a tree graph.

Let $D \in \R^{(n-1)\times n}$ be the incidence matrix of a directed tree graph rooted at vertex 1. Let $D^+\in \R^{n \times (n-1)}$ be its Moore-Penrose pseudoinverse. By Lemma 2.2 in \cite{orte19-1} we have that $D^+$ can be obtained as $ D^+ = ( \text{I}_n-\mathbb{I}_n/n)X_{-1}$, where $X= \begin{pmatrix} (1,0, \ldots, 0)\\ D \end{pmatrix}^{-1}$. As pointed out in \cite{orte18}, $X$ has the meaning of the rooted path matrix of the tree graph considered. Thus, the columns of $X_{-1}$ contain a minimum of $1$ and a maximum of $(n-1)$ entries having value 1, while the remaining entries are zeroes.

Let $i$ be the number of entries having value 1 of a column of $X_{-1}$. Let $v(i)\in \R^n,\ i \in [n]$ denote any vector with $i$ entries having value 1 and $(n-i)$ entries having value 0.
Define $g(i,n):= \norm{(\text{I}_n-\mathbb{I}_n/n)v(i)}^2_2$.
We have that $g(i,n)= i(1-i/n)^2+ (n-i)(i/n)^2= {i(n-i)}/{n},\ i \in [n-1]$.
The maximum of $g(i,n)$ for a given $n$ is reached at $i=n/2$ if $n$ is even and at $i\in \{\lfloor n/2 \rfloor, \lceil n/2 \rceil \}$ if $n$ is odd.

%
%
Moreover, $\max_{i \in [n-1]}g(i,n)$ is increasing in $n$ and $ g(i,n) \le \frac{n+1}{4}, \forall i \in [n-1],\ \forall n$.
Therefore, the $\ell^2$-norm of a column of $D^+_{-S}$ will never be greater than the greatest possible $\ell^2$-norm of a column of $D^+_{\vec{C}_i}$.
We thus have that
$$ \gamma= \max_{j \in [n-1]}\norm{d^+_j}_n\le \max_{i \in [n_{\max}-1]}\sqrt{\frac{g(i, n_{\max})}{n}}\le \sqrt{\frac{n_{\max} +1}{4n}}.$$
\end{proof}

\begin{proof}[Proof of Corollary \ref{analysis-c01s05}]
By Lemma \ref{analysis-l01s05} we have that
$$ \gamma\le \sqrt{\frac{n_{\max}+1}{4n}}, \text{ therefore we choose }  \lambda^2\ge  \sigma^2 n_{\max} \frac{\log(2n)+t}{n^2}.$$
By combining the above with Lemma \ref{analysis-l02s05}, Lemma \ref{analysis-l03s05} and Theorem \ref{analysis-t01s03} we get Corollary \ref{analysis-c01s05}.
\end{proof}

\newpage

\bibliographystyle{imsart-nameyear}

\bibliography{library}

\end{document}